\newtheorem{theorem}{Theorem}
\newtheorem{corollary}[theorem]{Corollary}
\newtheorem{proposition}[theorem]{Proposition}
\newtheorem{lemma}[theorem]{Lemma}
\theoremstyle{definition}
\newtheorem{remark}{Remark}
\numberwithin{theorem}{section}
\numberwithin{remark}{section}
\numberwithin{equation}{section}
\newcommand{\R}{\mathbb{R}}
\newcommand{\N}{\mathbb{N}}
\newcommand{\cE}{{\mathcal E}}   
\newcommand{\cG}{{\mathcal G}}
\newcommand{\weak}{\rightharpoonup}
\newcommand{\eps}{\varepsilon}
\newcommand{\beq}{\begin{equation}}
\newcommand{\eeq}{\end{equation}}
\DeclareMathOperator{\loc}{loc}
\newcommand{\Gcal}{{\mathcal{G}}}
\renewcommand{\a}{{\alpha}}
\newcommand{\lp}[2]{\|#1\|_{L^{#2}(\cG)}}
\title[Ground states for the combined NLS on graphs]{Ground states for the NLS equation with combined nonlinearities on non-compact metric graphs}
\author[D. Pierotti]{Dario Pierotti}\thanks{}
\address{Dario Pierotti \newline \indent
Dipartimento di Matematica,  Politecnico di Milano,  \newline \indent
Via Edoardo Bonardi 9, 20133 Milano, Italy}
\email{dario.pierotti@polimi.it}
\author[N. Soave]{Nicola Soave}\thanks{}
\address{Nicola Soave \newline \indent
Dipartimento di Matematica,  Politecnico di Milano,  \newline \indent
Via Edoardo Bonardi 9, 20133 Milano, Italy}
\email{nicola.soave@gmail.com; nicola.soave@polimi.it}
\keywords{Nonlinear Schr\"odinger equation; non-compact metric graphs; combined nonlinearities; ground states; $L^2$-critical}
\subjclass[2020]{35R02, 35Q55 (primary), 81Q35, 49J40 (secondary).}
\thanks{Nicola Soave is partially supported by the INDAM-GNAMPA group.}
\begin{document}

\begin{abstract}
We investigate the existence of ground states with prescribed mass for the NLS energy with combined $L^2$-critical and subcritical nonlinearities, on a general non-compact metric graph $\cG$. The interplay between the different nonlinearities creates new phenomena with respect to purely critical or subcritical problems on graphs; from a different perspective, topological and metric properties of the underlying graph drastically influence existence and non-existence of ground states with respect to the analogue problem on the real line.
\end{abstract}

\maketitle

\section{Introduction}

We investigate the existence of ground states for the NLS energy with combined nonlinearities
\beq\label{def E}
E_\a(u, \cG) := \int_{\cG} \left(\frac12 |u'|^2 - \frac16|u|^6 - \frac{\alpha}{p}|u|^p\right),
\eeq
under the mass constraint 
\[
u \in H^1_\mu(\cG) := \left\{ u \in H^1(\cG): \ \int_{\cG} |u|^2 = \mu \right\},
\]
where $\a \in \R$, $p \in (2,6)$, and $\cG$ is a non-compact metric graph. A metric graph is a connected metric space obtained by glueing together a finite number of closed line intervals, the \emph{edges} of the graph, by identifying some of their endpoints. The endpoints are the \emph{vertices} of the graph. Any bounded edge $\textrm{e}$ is identified with a closed bounded interval $[0,\ell]$ (where $\ell$ is the length of $\textrm{e}$), while unbounded edges are identified with (a copy of) the closed half-line $[0,+\infty)$. We always consider graphs with a finite number of vertices and edges, and hence the requirement that $\cG$ is non-compact translates into the existence of at least one unbounded edge. It would also be possible to consider non-compact graphs with infinite number of vertices and/or edges, and no unbounded edge (such as grids, or trees); however, this different class of graphs will not be considered here (we refer the interested reader to \cite{ADST19, ADR19, DST19}).

In this framework, by \emph{a ground state of mass $\mu$} we mean a minimizer for the problem
\beq\label{def gs level}
\mathcal{E}_\alpha(\mu,\cG):= \inf_{u \in H^1_\mu(\cG)} E_\alpha(u, \cG).
\eeq
The value $\mathcal{E}_\alpha(\mu,\cG)$ is called \emph{ground state energy level}, and it is clear that, searching for ground states, it is sufficient to work with real valued functions. \\
Ground states, and more in general critical points of $E_\alpha(\cdot,\Gcal)$ constrained on $H^1_{\mu}(\Gcal)$, satisfy, for some $\lambda \in \R$, the stationary NLS equation
\begin{equation}\label{eq:NLSE}
-u'' +  \lambda u = |u|^4 u + \alpha |u|^{p-2} u
\end{equation}
on every edge; moreover, at each vertex the Kirchhoff condition is satisfied, which requires
 the sum of all the outgoing derivatives to vanish (see \cite[Proposition 3.3]{AST1}).
Through the usual ansatz $\Phi(x,t) = e^{-i\lambda t} u(x)$, such critical points correspond to solitary wave solutions to the \emph{Schr\"odinger equation with combined nonlinearities}
\[
i \partial_t \Phi  + \partial_{xx} \Phi + |\Phi|^{q-2}\Phi + \alpha |\Phi|^{p-2}\Phi = 0, 
\qquad
x\in\Gcal, \ t>0,
\]
with $q=6$. Starting from the seminal contribution by T. Tao, M. Visan and X. Zhang \cite{TaoVisZha}, the study of this equation (for general $2<p<q$) in the Euclidean space $\R^N$ attracted much attention: global well-posedness, scattering, the occurrence of blow-up and more in general dynamical
properties were studied in \cite{TaoVisZha} and many papers \cite{AkaIbrKikNaw, ChMiZh, Feng, FukOht, GuZu, KiOhPoVi, JJTV, LeCozMaRa, MiaXuZha, MiaZhaZhe, Zha} (see also the references therein). Recently, the existence of normalized ground states was studied in \cite{So1, So2} (where a more general notion of ground state, suited to deal with a wider range of exponent than ours, is considered). On the contrary, the existence of ground states on metric graphs is completely open, and is the main topic of this paper.

When dealing with the $1$-dimensional Schr\"odinger equation, it is well known that the exponent $q=6$ plays a special role. In connection to minimization of the NLS energy, the homogeneous energy 
\[
\int_{\R} \left(\frac12 |u'|^2 - \frac1q|u|^q\right)
\]
with $q=6$ is bounded from below on $H^1_\mu(\R)$ if and only if the mass $\mu$ is smaller than or equal to $\mu_{\R} := \pi \sqrt{3}/2$, and a ground state of mass $\mu$ exists if and only if $\mu=\mu_{\R}$. This restriction is a consequence of the fact that the two terms in the energy scale in the same way with respect to mass-preserving dilations, a phenomenon which makes the minimization very unstable. For this reason, the problem is called \emph{$L^2$-critical}. If instead $2<q<6$, then the energy functional is always bounded from below on $H^1_\mu(\R)$, and a ground state of mass $\mu$ exists for every positive $\mu$. And, finally, if $q>6$, then the energy functional is always unbounded from below on $H^1_\mu(\R)$.

An interesting feature of the equation with combined nonlinearities stays in the fact that the presence of two powers destroys the scale invariance of the homogeneous critical equation. In terms of problem \eqref{def gs level}, this translates into existence of ground states for $\cE_\a(\mu, \R)$ if and only if $\mu$ stays in the interval of masses $(0,\mu_{\R})$, when $\alpha>0$. On the contrary, for $\alpha<0$ there is no ground state at all (for any $\mu>0$), see \cite[Theorem 1]{So1} (or Section \ref{sec: pre} below).

On general graphs the situation is much more involved, and the existence or non-existence of ground states in relation with the topological and metric properties of the graph is a highly non-trivial issue, even for subcritical problems. A systematic study for the homogeneous energy was carried out by R. Adami, E. Serra and P. Tilli in a series of paper concerning both the subcritical \cite{AST1, AST3} and the critical \cite{AST2} case. The aim of this paper is to study the inhomogeneous problem \eqref{def gs level}, analyzing the impact of the subcritical ``perturbation" $\alpha \lp{u}{p}^p/p$ on the homogeneous critical energy. This is somehow in the spirit of the Brezis-Nirenberg problem: we have a critical problem for which the structure of the ground states is known, thanks to \cite{AST2}, and we study what happens when we add a lower order perturbation. As we shall see, the interplay between the different powers creates new phenomena with respect to purely critical or subcritical problems. We emphasize that, while we always suppose that the critical nonlinearity is of focusing type (i.e., the coefficient in front of $\lp{u}{6}^6$ in the energy is negative), we allow for both focusing ($\alpha>0$) and defocusing ($\alpha<0$) lower order term. The results will be very different in the two cases. It would have also been possible to consider a defocusing critical nonlinearity, but in such case the energy is bounded from below for every choice of mass $\mu>0$, making the problem more similar to the purely subcritical one (and hence less interesting from the point of view of the present investigation).

The study of nonlinear Schr\"odinger equations on metric graphs has attracted considerable attention in the last decade. From the mathematical point of view, the problem presents a number of interesting new features with respect to the classical Euclidean setting. Furthermore, nonlinear evolution on graphs turns out to be relevant also from the physical point of view (see e.g. \cite{Meh, BoCa, SMSS}). We do not attempt to provide a complete overview of the many available results in the literature, for which we refer the interested reader to \cite{AST11, No} and the references therein. 

We limit to mention that results related to ours, concerning existence and non-existence of ground states for subcritical and critical problems, have been obtained in \cite{AST1, AST3, AST2}, whose main results will be discussed in details in what follows; in \cite{ACFN14, ACFN12} where the particular case of the star-graph is discussed (see also the reference therein); in \cite{DTcalcvar, ST16}, devoted to subcritical and critical problems with localized nonlinearities; in \cite{PSV, NP}, regarding existence of critical points (not necessarily ground states) of the critical NLS energy; in \cite{Do16, CDS18}, regarding the minimization of the energy on compact graphs. See also the references therein. 

Moreover, very recently, a problem with combined nonlinearity on $\R$ or on the star-graph was studied in \cite{BD19, ABD20}. The combined nonlinearity in \cite{BD19, ABD20} is of different nature with respect to the one considered here, being obtained by summing the $L^p$ norm of $u$ to the point-wise value $|u(0)|^q/q$, with $p$ and $q$ both subcritical. Also in \cite{BD19, ABD20} the interplay between the two nonlinearities gives raise to new phenomena with respect to the homogeneous case.
%They obtained different results for different ranges of the parameters $p$ and $q$, showing how the presence of a combined nonlinearity drastically changes the structure of the ground states.

\subsection{Statement of the main results}

In order to state our main results in a precise form, it is convenient to briefly review some results concerning the homogeneous problem obtained choosing $\alpha=0$ in \eqref{def E} and \eqref{def gs level}. This problems was studied in \cite{AST2}, where the authors showed that for any non-compact graph $\cG$ there exists a critical mass $\mu_{\cG}>0$ for which the following holds (see \cite[Proposition 2.4]{AST2}):
\begin{itemize}
\item[($i$)] If $\mu \le \mu_{\cG}$, then $\cE_0(\mu, \cG) = 0$, and the infimum is not attained for $\mu < \mu_{\cG}$;
\item[($ii$)] If $\mu > \mu_{\cG}$, then $\cE_0(\mu, \cG) < 0$ (possibly $-\infty$);
\item[($iii$)] If $\mu > \mu_{\R}$, then $\cE_0(\mu, \cG) =-\infty$.
\end{itemize}
Moreover, $\mu_{\R^+} \le  \mu_{\cG} \le \mu_{\R}$, where $\mu_{\R^+}$ (the critical mass associated with the graph $\R^+$) is equal to $\mu_{\R}/2$. Next, in order to sharpen their results, the authors of \cite{AST2} identified 4 different (mutually exclusive) classes of graphs (see \cite[Section 3]{AST2}):

\begin{enumerate}
 \item \emph{$\Gcal$ has at least a terminal point.} A terminal point (a tip) is a vertex $\textrm{v}$, not at infinity, of degree 1 (that is, there is only one edge having $\textrm{v}$ has an extremum). In this case $\mu_\Gcal = \mu_{\R^+}$, $\cE_0(\mu, \cG) =-\infty$ for $\mu > \mu_{\R^+}$, and $\cE_0(\mu, \cG)$ is never achieved unless $\Gcal$ is isometric to $\R^+$ and $\mu = \mu_{\R^+}$;
 \item \emph{$\Gcal$ admits a cycle covering.} Here ``cycle'' means either a bounded loop, or an unbounded path joining two distinct points at infinity. Equivalently, $\Gcal$ has at least two half-lines and no terminal point, and whenever $\Gcal\setminus \textrm{e}$ has two connected components, both are unbounded (here $\textrm{e}$ denotes any bounded edge). Then $\mu_\Gcal = \mu_{\R}$, and $\cE_0(\mu, \cG)$ is never achieved unless $\Gcal$ is isometric to $\R$ or to a ``tower of bubbles'' (see \cite[Example 2.4]{AST1}), and $ \mu= \mu_{\R}$;
 \item \emph{$\Gcal$ has exactly one half-line and no terminal point.} Then $\mu_\Gcal =
 \mu_{\R^+}$, and $\cE_0(\mu, \cG) \in (-\infty,0)$ is achieved if and only if $\mu\in (\mu_{\R^+},\mu_{\R}]$;
 \item \emph{$\Gcal$ does not belong to any of the previous three cases, i.e. it has no tip, no cycle-covering and at least 2 half-lines.} Then, assuming further that
 $\mu_\Gcal < \mu_{\R}$, we have that $\cE_0(\mu, \cG) \in (-\infty,0)$
 is achieved if and only if $\mu\in[\mu_{\Gcal},\mu_{\R}]$; if $\mu_\Gcal = \mu_{\R}$, then nothing is known.
\end{enumerate}

For graphs of the fourth type, it is also known that $\mu_{\cG}$ is strictly larger than $\mu_{\R^+}$, see \cite{PSV}, and there are explicit examples for which $\mu_{\cG}<\mu_{\R}$ (on the contrary, it is an open problem to find a graph of type 4 having critical mass equal to $\mu_{\R}$).

From the above discussion, it emerges the key role of the critical mass $\mu_{\cG}$.

Let us now consider the inhomogeneous case $\alpha \neq 0$. In this case, for any non-compact graph $\cG$, we introduce a new critical mass, defined as follows:
\beq\label{new crit}
\tilde \mu_{\cG}:= \begin{cases} \mu_{\R^+} & \text{if $\cG$ has a terminal point}, \\  \mu_{\R} & \text{if $\cG$ does not have a terminal point}. \end{cases} 
\eeq
Notice that, by the classification in \cite{AST2}, $\tilde \mu_{\cG} =  \mu_{\R^+} = \mu_{\cG}$ if $\cG$ has a terminal point, while $\tilde \mu_{\cG} = \mu_{\R} \ge \mu_{\cG}$ in the other cases, with strict inequality for graphs of type 3 and some graphs of type 4.

\subsection*{Focusing lower order term: $\alpha>0$.} At first, we consider the case when the lower order term in \eqref{def E}  is of focusing type, i.e. $\alpha>0$. The first of our main results is the following. %Thanks to Proposition \ref{prop: basic}, we restrict ourselves in the range $\mu \in (0,\mu_{\R})$, since for $\mu \ge \mu_{\R}$ we already know that $\cE_\a(\mu, \cG) =-\infty$. 

\begin{theorem}\label{thm: main foc}
Let $\cG$ be a non-compact metric graph, $p \in (2,6)$, $\alpha>0$. We have that
\[
\cE_\alpha(\mu, \cG)  \in (-\infty,0) \quad \text{if $\mu \in (0,\tilde \mu_{\cG})$}, \quad \cE_\alpha(\mu, \cG)  = -\infty \quad \text{if $\mu \ge \tilde \mu_{\cG}$}.
\]
Moreover, if $\mu \in (0,\tilde \mu_{\cG})$ and 
\[
\cE_\a(\mu, \cG) < \cE_\a(\mu, \R),
\]
then there exists a ground state for $\cE_\a(\mu, \cG)$.
\end{theorem}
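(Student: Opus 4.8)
The plan is to establish first the energy dichotomy (finite negative versus $-\infty$) and then the existence of a minimizer by concentration--compactness, using the strict inequality exactly to prevent escape of mass to infinity.

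For the bound $\cE_\alpha(\mu,\cG)<0$, valid for \emph{every} $\mu>0$, I would use that $\cG$ carries a half-line and \emph{spread out} a fixed nonzero bump $w$ with compact support inside it: setting $w_\sigma(x)=\sigma^{-1/2}w(x/\sigma)$ (extended by $0$) preserves the mass, while $\frac12\|w_\sigma'\|^2-\frac16\|w_\sigma\|_6^6=\sigma^{-2}(\frac12\|w'\|^2-\frac16\|w\|_6^6)$ and $\frac\alpha p\|w_\sigma\|_p^p=\sigma^{-(p-2)/2}\frac\alpha p\|w\|_p^p$; since $p<6$ gives $(p-2)/2<2$, the negative subcritical term dominates as $\sigma\to\infty$ and $E_\alpha(w_\sigma,\cG)<0$. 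For the value $-\infty$ when $\mu\ge\tilde\mu_\cG$ I would instead \emph{concentrate} a critical (half-)soliton of mass exactly $\tilde\mu_\cG$: the half-soliton of mass $\mu_{\R^+}$ at a tip when $\cG$ has one, otherwise a soliton of mass $\mu_\R$ squeezed into the interior of a half-line so that it vanishes at the junction. At the critical mass the homogeneous part is scale invariant and stays $0$ under $u_\lambda(x)=\sqrt\lambda\,u(\lambda x)$, whereas $\frac\alpha p\|u_\lambda\|_p^p=\lambda^{(p-2)/2}\frac\alpha p\|u\|_p^p\to+\infty$, so $E_\alpha(u_\lambda)\to-\infty$; for $\mu>\tilde\mu_\cG$ one already has $\cE_0(\mu,\cG)=-\infty$ by the classification and dilating any negative-energy competitor suffices. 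The only technical nuisance here is the exponentially small error from truncating the soliton tail at the junction.

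For boundedness from below when $\mu<\tilde\mu_\cG$ I would combine the sharp critical Gagliardo--Nirenberg inequality of \cite{AST2} with the subcriticality of the $L^p$-term. For $\mu<\mu_\cG$ (in particular the whole terminal-point case, where $\tilde\mu_\cG=\mu_\cG$), rescaling $u$ to mass $\mu_\cG$ in $\cE_0(\mu_\cG,\cG)=0$ yields $\frac16\|u\|_6^6\le\frac12(\mu/\mu_\cG)^2\|u'\|^2$ with a strictly positive margin, and since $\|u\|_p^p\le C\mu^{(p+2)/4}\|u'\|^{(p-2)/2}$ is of lower order in $\|u'\|$, one gets $E_\alpha(u)\ge\frac12\big(1-(\mu/\mu_\cG)^2\big)\|u'\|^2-C\|u'\|^{(p-2)/2}$, which is bounded below and coercive. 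The delicate range is $\mu_\cG\le\mu<\mu_\R=\tilde\mu_\cG$ (graphs of type 3--4 with $\mu_\cG<\mu_\R$), where the homogeneous energy is negative but finite: here one must use that the only way to drive $\|u'\|\to\infty$ at bounded homogeneous energy is concentration, which is governed by the real-line threshold $\mu_\R$, so $\mu<\mu_\R$ leaves enough kinetic coercivity to absorb the subcritical term. This step genuinely needs the refined estimate of \cite{AST2} (the crude $L^\infty$ interpolation only reaches $\sqrt3<\mu_\R$) and is the first main obstacle.

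For existence, the coercivity above makes any minimizing sequence $u_n$ bounded in $H^1(\cG)$; I would pass to a weak limit $u$ with $m:=\|u\|_2^2\in[0,\mu]$ and strong convergence on compact parts. A Brezis--Lieb splitting separates $u$ from the mass escaping along the half-lines, which in the limit lives on $\R$, giving $\cE_\alpha(\mu,\cG)\ge\cE_\alpha(m,\cG)+\cE_\alpha(\mu-m,\R)$. Total escape $m=0$ contradicts the hypothesis at once, since it forces $\cE_\alpha(\mu,\cG)\ge\cE_\alpha(\mu,\R)$. Partial escape $0<m<\mu$ I would exclude through the strict binding inequality $\cE_\alpha(\mu,\cG)<\cE_\alpha(m,\cG)+\cE_\alpha(\mu-m,\R)$: glue a near-minimizer of $\cE_\alpha(m,\cG)$ and the $\R$-ground state of mass $\mu-m$ (which exists since $\mu-m<\mu_\R$) at large but \emph{finite} distance on a half-line, so that the attractive nonlinear interaction through the overlapping tails strictly lowers the energy below the sum, beating the small kinetic cross-term. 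This forces $m=\mu$, whence strong convergence and $E_\alpha(u,\cG)=\cE_\alpha(\mu,\cG)$, so $u$ is a ground state. Establishing this binding inequality is the second, and I expect the principal, obstacle: it is precisely what makes the strict assumption $\cE_\alpha(\mu,\cG)<\cE_\alpha(\mu,\R)$ sufficient to restore compactness.
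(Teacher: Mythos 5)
Your treatment of the energy levels is sound and close to the paper's: the spreading argument gives $\cE_\a(\mu,\cG)<0$ for all $\mu>0$ (the paper instead quotes $\cE_\a(\mu,\cG)\le\cE_\a(\mu,\R)<0$), the concentration of a truncated (half-)soliton at a tip is exactly the paper's construction for $\mu=\mu_{\R^+}$, and you correctly identify that boundedness from below on $[\mu_{\cG},\mu_{\R})$ for graphs without terminal points requires the modified Gagliardo--Nirenberg inequality of \cite{AST2} rather than the standard one. The existence part, however, has a genuine gap precisely at the step you yourself flag as the ``principal obstacle'': the exclusion of dichotomy $0<m<\mu$. You propose to prove the strict binding inequality $\cE_\a(\mu,\cG)<\cE_\a(m,\cG)+\cE_\a(\mu-m,\R)$ by gluing a near-minimizer on $\cG$ to the real-line ground state at large finite distance and extracting a negative tail--tail interaction. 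This is not carried out, and as stated it is delicate: the favourable nonlinear cross-terms and the unfavourable mass-renormalization and kinetic cross-terms are all of the same exponential order in the separation parameter, so without a careful computation of the decay rates (which depend on the unknown Lagrange multipliers of the two pieces) the sign of the net correction is not determined. Since this is the step that restores compactness, the proof is incomplete as it stands.

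The paper avoids this entirely. It first proves that $\mu\mapsto\cE_\a(\mu,\cG)$ is continuous and \emph{strictly subadditive} on $(0,\tilde\mu_{\cG})$ by the elementary mass-scaling $u\mapsto\theta^{1/2}u$: since $\a>0$ and $p>2$, one gets $\cE_\a(\theta\mu,\cG)<\theta\,\cE_\a(\mu,\cG)$ for $\theta>1$, and subadditivity follows by applying this twice. Combined with $\cE_\a(\mu-m,\R)\ge\cE_\a(\mu-m,\cG)$ from \eqref{dis gs}, this already yields your binding inequality with no gluing at all; in the paper the Brezis--Lieb splitting is performed with both terms on $\cG$ and dichotomy is excluded directly by subadditivity. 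I would strongly recommend replacing your gluing argument by this route. A second, more minor, gap: your exclusion of total vanishing ($m=0$) asserts that the escaping mass ``in the limit lives on $\R$'', so that $\cE_\a(\mu,\cG)\ge\cE_\a(\mu,\R)$. When the mass escapes along several half-lines simultaneously this is not immediate; the paper justifies it by truncating $u_n$ below its maximum over the compact core and symmetrically rearranging the resulting function onto $\R$ (every level set then has at least two preimages, so the rearrangement does not increase the kinetic energy), together with the identification of $\cE_\a(\mu,\R)$ with its relaxed version over masses $\le\mu$ (Lemma \ref{lem: inf <=}, which in turn uses monotonicity of the level). Your conclusion is correct, but this rearrangement step is where the hypothesis $\cE_\a(\mu,\cG)<\cE_\a(\mu,\R)$ actually enters and it should be made explicit.
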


\begin{corollary}\label{cor: foc}
Let $\cG$ be a non-compact metric graph, $p \in (2,6)$, $\alpha>0$ and $\mu \in (0,\tilde \mu_{\cG})$. If there exists $u \in H^1_\mu(\cG)$ such that 
\[
E_\a(u, \cG) \le \cE_\a(\mu, \R),
\]
then there exists a ground state for $\cE_\a(\mu, \cG)$.
\end{corollary}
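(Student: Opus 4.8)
The plan is to deduce the corollary from Theorem~\ref{thm: main foc} by a simple dichotomy on the value of $\cE_\a(\mu,\cG)$ relative to $\cE_\a(\mu,\R)$. First I would record that, since the given $u$ belongs to $H^1_\mu(\cG)$ and is therefore admissible for the infimum defining $\cE_\a(\mu,\cG)$, the hypothesis $E_\a(u,\cG) \le \cE_\a(\mu,\R)$ immediately produces the chain
\[
\cE_\a(\mu,\cG) \le E_\a(u,\cG) \le \cE_\a(\mu,\R).
\]
In particular $\cE_\a(\mu,\cG) \le \cE_\a(\mu,\R)$, so the whole question reduces to deciding whether this last inequality is strict or not. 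Note that $\cE_\a(\mu,\R)$ is finite here: since $\tilde\mu_\cG \le \mu_{\R}$, the assumption $\mu \in (0,\tilde\mu_\cG)$ forces $\mu < \mu_{\R}$, and on the line the energy with $\alpha>0$ is bounded below (with finite infimum) in that mass range.

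Next I would split into the two possible cases. If $\cE_\a(\mu,\cG) < \cE_\a(\mu,\R)$, then, using that $\mu \in (0,\tilde\mu_\cG)$ already guarantees $\cE_\a(\mu,\cG) \in (-\infty,0)$ by Theorem~\ref{thm: main foc}, the strict-inequality hypothesis of that theorem is satisfied and the existence of a ground state follows at once. If instead $\cE_\a(\mu,\cG) = \cE_\a(\mu,\R)$, then the displayed chain collapses to equalities, forcing $E_\a(u,\cG) = \cE_\a(\mu,\cG)$; hence the prescribed competitor $u$ is itself a minimizer, and therefore a ground state in the sense defined above. These two alternatives are exhaustive, which concludes the argument.

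I do not expect a genuine analytic obstacle here, since all the substantive work, namely the boundedness from below on $(0,\tilde\mu_\cG)$ and the comparison criterion $\cE_\a(\mu,\cG) < \cE_\a(\mu,\R)$ ensuring compactness of minimizing sequences, is already contained in Theorem~\ref{thm: main foc}. The only point deserving care is the equality case: the apparent weakening from a strict to a non-strict inequality in the hypothesis is harmless precisely because, when the strict comparison fails, the function $u$ supplied by the assumption is automatically optimal. Making this observation explicit is what turns the non-strict hypothesis of the corollary into a usable criterion.
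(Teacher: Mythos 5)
Your argument is correct and is exactly the intended one: the paper dismisses the corollary with ``follows straightforwardly'' after Theorem~\ref{thm: main foc}, the implicit reasoning being precisely your dichotomy (strict inequality $\Rightarrow$ apply the theorem; equality $\Rightarrow$ the given competitor $u$ attains the infimum and is itself a ground state), as in \cite[Corollary 3.4]{AST3}. Nothing is missing.
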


These statements already unveils how the combination of subcritical and critical powers mixed things up: as in the critical case, we have a critical mass above which the energy functional is unbounded from below, and hence ground states do not exist. Below this threshold, the picture is somehow analogue to the one available in the focusing subcritical problem, as described in \cite[Section 3]{AST3} (see in particular Theorem 3.3 and Corollary 3.4 therein).

Several applications of this corollary are available, essentially replicating the arguments in \cite[pag. 213-214]{AST3} in the present setting. In particular, supposing $\alpha>0$, we have that:
\begin{itemize}
\item[(a)] If $\cG$ consists of two half-lines and one bounded edge (or arbitrary length) joined at their initial point, then there exists a ground state for $\cE_{\alpha}(\mu, \cG)$ if and only if $\mu \in (0, \mu_{\R^+})$. Moreover, $\cE_{\alpha}(\mu, \cG) =-\infty$ for $\mu \ge \mu_{\R^+}$.
\item[(b)] If $\cG$ is isometric to $\R$ or to a tower of bubbles, then there exists a ground state for $\cE_{\alpha}(\mu, \cG) \in (-\infty,0)$ if and only if $\mu \in (0, \mu_{\R})$. Moreover, $\cE_{\alpha}(\mu, \cG) =-\infty$ for $\mu \ge \mu_{\R}$.
\item[(c)] If $\cG$ is a tadpole graph (a half-line attached to a self-loop), then there exists a ground state for $\cE_{\alpha}(\mu, \cG)$ if and only if $\mu \in (0, \mu_{\R})$. Moreover, $\cE_{\alpha}(\mu, \cG) =-\infty$ for $\mu \ge \mu_{\R}$.
\item[(d)] If $\cG$ is a sign-post graph (two-half-lines and a bounded edge $\textrm{e}$ glued in the same vertex, and a further self-loop attached to the second vertex of $\textrm{e}$), then there exists a ground state for $\cE_{\alpha}(\mu, \cG)$ if and only if $\mu \in (0, \mu_{\R})$. Moreover, $\cE_{\alpha}(\mu, \cG) =-\infty$ for $\mu \ge \mu_{\R}$.
\end{itemize}
The possibility of replicating the same arguments used in \cite{AST3} is a consequence of the fact that, when $\alpha>0$ and $\mu \in(0,\mu_{\R})$, there exists a ground state $\phi_{\alpha,\mu}$ for $\cE_\alpha(\mu,\R)$, and $\phi_{\alpha,\mu}$ has the same properties of ground states for the energy functional associated with the subcritical homogeneous NLS energy on $\R$: in particular, it is even with respect to a point and decreasing from that point on (see Section \ref{sec: pre} for a detailed discussion on the problem on $\R$ and $\R^+$). This is all what is needed in order to apply Corollary \ref{cor: foc} in the above cases, precisely in the same way as \cite[Corollary 3.4]{AST3}  is applied in \cite[pag. 213, 214]{AST3}.

Notice that the particular graphs described in examples (a), (b), (c), (d) belong to type (1), (2), (3), (4) respectively. Therefore, for each class of graph we may have existence of ground states in a full interval of masses, in sharp contrast to what happens in the homogeneous critical case.

\medskip

Further results can be obtained by restricting to specific classes of graphs, in the spirit of \cite{AST1, AST3}. We analyze at first graphs which can be covered by cycles. As proved in \cite[Theorem 2.5]{AST1} and \cite[Theorem 3.2]{AST2}, this class of graph is particularly unfavorable for the existence of ground states in the homogeneous subcritical or critical cases. The same happens with combined focusing nonlinearities. 

\begin{theorem}\label{thm: cycle}
Let $\cG$ be a non-compact metric graph which admits a cycle covering, and let $p \in (2,6)$ and $\alpha>0$. Then $\cE_\a(\mu,\cG)>-\infty$ if and only if $\mu \in (0, \mu_{\R})$, and for these masses the infimum is achieved if and only if $\cG$ is isometric to $\R$ or to a tower of bubbles.
\end{theorem}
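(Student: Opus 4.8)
The assertion $\cE_\a(\mu,\cG)>-\infty$ if and only if $\mu\in(0,\mu_{\R})$ is immediate from Theorem~\ref{thm: main foc}: a cycle-covering graph has no terminal point, so $\tilde\mu_{\cG}=\mu_{\R}$, whence $\cE_\a(\mu,\cG)\in(-\infty,0)$ for $\mu<\mu_{\R}$ and $\cE_\a(\mu,\cG)=-\infty$ for $\mu\ge\mu_{\R}$. It remains to discuss attainment for $\mu\in(0,\mu_{\R})$. The plan is to first establish the sharp identity
\[
\cE_\a(\mu,\cG)=\cE_\a(\mu,\R)\qquad\text{for every }\mu\in(0,\mu_{\R}),
\]
and then to read off the rigid classification from the equality case of a rearrangement inequality. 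Throughout I write $\phi_{\alpha,\mu}$ for the ground state of $\cE_\a(\mu,\R)$, which is even, positive, decreasing away from its center, and exponentially decaying, as recalled in Section~\ref{sec: pre}.

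For the lower bound $\cE_\a(\mu,\cG)\ge\cE_\a(\mu,\R)$ I would use the symmetric decreasing rearrangement onto $\R$. Given $u\in H^1_\mu(\cG)$ one may assume $u\ge0$ (passing to $|u|$ changes neither mass nor energy), and let $u^*\in H^1_\mu(\R)$ be its symmetric rearrangement. Since $\cG$ admits a cycle covering, almost every level set $\{u=t\}$ contains at least two points, which is precisely the hypothesis under which the graph P\'olya--Szeg\H{o} inequality $\|(u^*)'\|_{L^2(\R)}\le\|u'\|_{L^2(\cG)}$ holds (see \cite{AST1,AST2}). As rearrangement preserves every $L^q$ norm, the terms $\|u\|_{L^6}^6$ and $\|u\|_{L^p}^p$ are unchanged, so $E_\a(u^*,\R)\le E_\a(u,\cG)$; minimizing over $u$ yields the bound. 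For the matching upper bound $\cE_\a(\mu,\cG)\le\cE_\a(\mu,\R)$ I would translate $\phi_{\alpha,\mu}$ far out along one of the half-lines of $\cG$: setting $u_n=\phi_{\alpha,\mu}(\cdot-x_n)$ on a fixed half-line with $x_n\to+\infty$ and $u_n=0$ elsewhere (after a cut-off near the attaching vertex that absorbs the exponentially small values there, followed by an $o(1)$ rescaling to restore the mass), one gets $E_\a(u_n,\cG)\to E_\a(\phi_{\alpha,\mu},\R)=\cE_\a(\mu,\R)$. Combining the two bounds gives the identity.

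With the identity at hand, attainment obeys a rigid dichotomy. If $u\ge0$ minimizes $\cE_\a(\mu,\cG)$, then $\cE_\a(\mu,\R)\le E_\a(u^*,\R)\le E_\a(u,\cG)=\cE_\a(\mu,\cG)=\cE_\a(\mu,\R)$, so all inequalities are equalities: $u^*$ is a ground state on $\R$, hence $u^*=\phi_{\alpha,\mu}$ up to translation and in particular strictly decreasing away from its center, and equality holds in the P\'olya--Szeg\H{o} inequality. I would then invoke the analysis of the equality case on metric graphs from \cite{AST1,AST2}: equality forces almost every level set of $u$ to consist of exactly two points and every superlevel set to be connected, which, together with the facts that $u$ is strictly positive and solves \eqref{eq:NLSE} with Kirchhoff conditions, is compatible with a cycle covering only when $\cG$ is isometric to $\R$ or to a tower of bubbles. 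Conversely, on $\R$ the function $\phi_{\alpha,\mu}$ is itself a minimizer, while on a tower of bubbles the reflection structure of the graph allows one to fold $\phi_{\alpha,\mu}$ onto $\cG$ by a local isometry; the resulting function has the same mass and energy as $\phi_{\alpha,\mu}$ and therefore attains $\cE_\a(\mu,\cG)=\cE_\a(\mu,\R)$, exactly as in \cite[Example 2.4]{AST1}.

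The main obstacle is the equality case of the rearrangement, i.e.\ the passage from ``$u$ is a minimizer'' to ``$\cG\cong\R$ or a tower of bubbles''. The decisive simplification is that the combined nonlinearity does not interfere here: since the rearrangement keeps $\|u\|_{L^6}$ and $\|u\|_{L^p}$ fixed and only lowers the kinetic energy, the discussion of equality is identical to the homogeneous critical one, so the geometric rigidity can be imported essentially verbatim from \cite{AST2}. The only genuinely new verifications are that the translating test functions still yield the upper bound in the presence of the subcritical term (straightforward from its scaling and the exponential decay of $\phi_{\alpha,\mu}$) and that the folding on a tower of bubbles preserves the $L^p$ term as well (immediate, being a local isometry applied to a function symmetric about the fold points).
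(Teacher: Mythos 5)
Your proposal is correct and follows essentially the same route as the paper, which simply observes that the proof of \cite[Theorem 2.5]{AST1} (rearrangement onto $\R$ using that cycle coverings force at least two preimages per level, the matching upper bound from \eqref{dis gs}, and the rigidity analysis of the equality case) adapts verbatim because ground states of $\cE_\a(\mu,\R)$ share the soliton's symmetry and strict monotonicity, while the rearrangement leaves both the $L^6$ and $L^p$ terms unchanged. The boundedness statement via Theorem \ref{thm: main foc} is likewise the intended argument.
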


Notice that, even if $\cE_\a(\mu,\cG)>-\infty$, a ground state does not necessarily exist, in contrast with the case $\cG = \R$ (see Theorem \ref{thm: R} below).%Recalling that if $\alpha<0$ graphs with cycle covering never have ground states (see Proposition \ref{prop: basic}), this result provide a complete classification in the class of graphs with cycle covering. 

\medskip 

We now move to graphs with a terminal point. By Theorem \ref{thm: main foc}, the energy is unbounded from below on $H^1_\mu(\cG)$ whenever $\mu \ge \mu_{\R^+}$. If instead $\mu \in (0,\mu_{\R^+})$, the energy is bounded from below and it makes sense to search for ground states (for instance, see example (a) above). In this range we can prove that existence or non-existence of ground states may depend on metric properties of $\cG$. This is again in analogy  with the subcritical homogeneous problems, see \cite[Section 4]{AST3}.

\begin{proposition}\label{prop: ex tip}
Let $\cG$ be a non-compact metric graph with a terminal edge of length $\ell$. Let $\alpha>0$ and $\mu \in (0,\mu_{\R^+})$. There exists $\bar \ell= \bar \ell(\alpha,\mu)>0$ such that, if $\ell \ge \bar \ell$, then there exists a ground state for $\cE_\a(\mu, \cG)$.
\end{proposition}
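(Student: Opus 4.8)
The plan is to invoke the sufficient condition for existence in Theorem \ref{thm: main foc}. Since $\cG$ has a terminal point we have $\tilde\mu_{\cG} = \mu_{\R^+}$, so for $\mu \in (0,\mu_{\R^+})$ it is enough to exhibit one admissible function on $\cG$ whose energy lies strictly below $\cE_\alpha(\mu, \R)$. The idea is to transplant a truncated half-line ground state onto the long terminal edge, so that as $\ell \to \infty$ the energy of the competitor tends to $\cE_\alpha(\mu, \R^+)$, which is strictly smaller than $\cE_\alpha(\mu, \R)$.

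I would first record the line/half-line comparison. Using that the ground state of $\cE_\alpha(m, \R)$ is even about its maximum (Section \ref{sec: pre}), an even reflection/restriction argument gives the identity $\cE_\alpha(\mu, \R^+) = \frac12\cE_\alpha(2\mu, \R)$ for $\mu \in (0,\mu_{\R^+})$ (so that $2\mu < \mu_\R$); combined with the strict subadditivity of $m \mapsto \cE_\alpha(m, \R)$ this yields
\[
\cE_\alpha(\mu, \R^+) < \cE_\alpha(\mu, \R), \qquad \mu \in (0,\mu_{\R^+}).
\]
Let $\psi$ be the restriction to $\R^+$ of the even ground state of $\cE_\alpha(2\mu, \R)$: it is positive, monotone decreasing, exponentially decaying, has mass $\mu$ on $\R^+$, and satisfies $E_\alpha(\psi, \R^+) = \cE_\alpha(\mu, \R^+)$.

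Next I would build the competitor. Writing the terminal edge as $[0,\ell]$ with the tip at $x=0$ and the vertex $\textrm{v}$ joining the rest of $\cG$ at $x=\ell$, I place a cut-off copy of $\psi$ on this edge, peaked at the tip: set $v_\ell := \chi_\ell\,\psi$ on $[0,\ell]$, where $\chi_\ell = 1$ on $[0,\ell-1]$, $\chi_\ell(\ell)=0$, and $|\chi_\ell'| \le C$, and extend $v_\ell$ by zero on $\cG\setminus[0,\ell]$. Since $v_\ell(\ell)=0$ the extension is continuous at $\textrm{v}$, hence $v_\ell \in H^1(\cG)$ (a competitor need not satisfy the Kirchhoff condition). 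The exponential decay of $\psi$ makes all mass and energy contributions of the cut-off region $[\ell-1,\ell]$ vanish as $\ell \to \infty$, so $\|v_\ell\|_{L^2(\cG)}^2 \to \mu$ and $E_\alpha(v_\ell, \cG) \to \cE_\alpha(\mu, \R^+)$. Rescaling the mass, $w_\ell := \theta_\ell v_\ell$ with $\theta_\ell := (\mu/\|v_\ell\|_{L^2(\cG)}^2)^{1/2} \to 1$, gives $w_\ell \in H^1_\mu(\cG)$, and since the three energy terms carry the harmless factors $\theta_\ell^2, \theta_\ell^6, \theta_\ell^p \to 1$ we still have $E_\alpha(w_\ell, \cG) \to \cE_\alpha(\mu, \R^+)$.

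Finally I would conclude. By the strict inequality above, $\lim_{\ell\to\infty} E_\alpha(w_\ell, \cG) = \cE_\alpha(\mu, \R^+) < \cE_\alpha(\mu, \R)$, so there is $\bar\ell = \bar\ell(\alpha,\mu)$ with $E_\alpha(w_\ell, \cG) < \cE_\alpha(\mu, \R)$ for every $\ell \ge \bar\ell$; in particular $\cE_\alpha(\mu, \cG) < \cE_\alpha(\mu, \R)$, and Theorem \ref{thm: main foc} produces a ground state. The step I expect to be the main obstacle is the strict line/half-line inequality: the identity $\cE_\alpha(\mu,\R^+) = \frac12\cE_\alpha(2\mu,\R)$ is elementary, but the strict subadditivity of $m \mapsto \cE_\alpha(m, \R)$ is delicate for the combined nonlinearity, since $\cE_\alpha(\cdot,\R)$ is not a pure power and its two nonlinear terms scale differently under mass-preserving dilations. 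Everything else reduces to the routine truncation-and-renormalization estimates above, which rely only on the exponential decay of $\psi$.
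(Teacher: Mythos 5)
Your proposal is correct and follows essentially the same route as the paper: the paper also first proves the strict inequality $\cE_\a(\mu,\R^+)<\cE_\a(\mu,\R)$ (Lemma \ref{lem: gs R+ R}, via the identity \eqref{gs R e R+} and the scaling inequality $\cE_\a(2\mu,\R)<2\cE_\a(\mu,\R)$, i.e.\ exactly the strict subadditivity you invoke, which is already available from Lemma \ref{lem: sub}), and then places a compactly supported near-minimizer from $H^1_\mu(\R^+)$ on the terminal edge before applying Theorem \ref{thm: main foc}. The only cosmetic difference is that you build the compactly supported competitor by explicitly truncating the half-line ground state, whereas the paper invokes the density of compactly supported functions (Remark \ref{rmk: density}); both work.
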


In the opposite direction:

\begin{proposition}\label{prop: non ex tip}
Let $\mu \in (0,\mu_{\R^+})$, $\alpha>0$, and let $\cG$ be a non-compact graph such that $\cE_\a(\mu, \cG)$ does not admit a ground state (for instance, this is the case if $\cG$ can be covered by cycle and is not isomorphic to $\R$ or to a tower of bubbles). Let $\cG_\ell$ denote the graph made up by glueing a terminal edge of length $\ell$ at a fixed vertex of $\cG$. Then there exists $\tilde \ell = \tilde \ell(\alpha,\mu) >0$ such that, if $\ell < \tilde \ell$, then there is no ground state for $\cE_\a(\mu, \cG_{\ell})$.
\end{proposition}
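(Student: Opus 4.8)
\emph{Strategy.} I would argue by contradiction, assuming that there is a sequence $\ell_n \downarrow 0$ for which $\cE_\a(\mu,\cG_{\ell_n})$ is attained by some $u_n \in H^1_\mu(\cG_{\ell_n})$, and deriving a contradiction with the non-existence of ground states on $\cG$. The first reduction is to pin down the common value of the two infima. Since $\cG_\ell$ has a terminal point we have $\tilde\mu_{\cG_\ell} = \mu_{\R^+}$, so $\mu \in (0,\mu_{\R^+}) = (0,\tilde\mu_{\cG_\ell})$, and Theorem \ref{thm: main foc} guarantees $\cE_\a(\mu,\cG_\ell) \in (-\infty,0)$, with a ground state whenever $\cE_\a(\mu,\cG_\ell) < \cE_\a(\mu,\R)$. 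Applying the same theorem to $\cG$ itself (here $\mu < \mu_{\R^+} \le \tilde\mu_\cG$) shows that the hypothesis that $\cG$ carries no ground state forces $\cE_\a(\mu,\cG) = \cE_\a(\mu,\R)$: indeed the reverse bound $\cE_\a(\mu,\cG) \le \cE_\a(\mu,\R)$ always holds by transplanting a near-optimal profile for $\cE_\a(\mu,\R)$ far out along a half-line of $\cG$ (as in Section \ref{sec: pre}), while a strict inequality would produce a ground state on $\cG$. Thus the target becomes: for $\ell$ small, $\cE_\a(\mu,\cG_\ell)$ equals the escape level $\cE_\a(\mu,\R)$ and is not attained.

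\emph{A priori bounds and decoupling of the pendant.} Next I would collect the uniform estimates that let a short pendant be treated as a negligible perturbation. Since $\mu<\mu_{\R^+}$ keeps the critical term subordinate, the constraint $u_n \in H^1_\mu(\cG_{\ell_n})$ together with $E_\a(u_n,\cG_{\ell_n}) \in (-\infty,0)$ yields, via Gagliardo--Nirenberg, a uniform bound on $\|u_n\|_{H^1(\cG_{\ell_n})}$, hence a uniform $L^\infty$ bound through the embedding $H^1(\cG_{\ell_n}) \embed L^\infty(\cG_{\ell_n})$ (with a constant controlled for $\ell_n$ bounded). Writing $T_{\ell_n}\cong[0,\ell_n]$ for the pendant, the mass it carries satisfies $\int_{T_{\ell_n}}|u_n|^2 \le \ell_n\|u_n\|_\infty^2 \to 0$, and its energy contribution is bounded below by $-C\ell_n \to 0$. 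Removing the pendant and renormalizing the restriction $u_n|_{\cG}$ to mass $\mu$ then produces $w_n \in H^1_\mu(\cG)$ with $E_\a(w_n,\cG) \le E_\a(u_n,\cG_{\ell_n}) + o(1)$, so that $\cE_\a(\mu,\cG_{\ell_n}) \to \cE_\a(\mu,\cG) = \cE_\a(\mu,\R)$ and $\{w_n\}$ is a minimizing sequence for $\cE_\a(\mu,\cG)$.

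\emph{The crux.} The heart of the matter is a \emph{threshold} estimate asserting that a sufficiently short pendant cannot lower the energy below the escape value: I would show that there is $\tilde\ell(\a,\mu)>0$ such that, for every $\ell<\tilde\ell$ and every $u\in H^1_\mu(\cG_\ell)$, one has $E_\a(u,\cG_\ell)\ge\cE_\a(\mu,\R)$; combined with the automatic bound $\cE_\a(\mu,\cG_\ell)\le\cE_\a(\mu,\R)$ this gives $\cE_\a(\mu,\cG_\ell)=\cE_\a(\mu,\R)$ for short pendants. The natural route is a length-dependent surgery: cut $u$ at the attaching vertex, bound from below the energy stored on $T_\ell\cong[0,\ell]$ (where $u'$ vanishes at the tip), and transplant the pendant mass onto a half-line of $\cG$, comparing with $\cE_\a(\mu,\cG)=\cE_\a(\mu,\R)$; the point is that for short $\ell$ the pendant is too short to host the portion of a soliton needed to produce a strict gain. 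Once equality is known, non-attainment follows from the compactness dichotomy for a would-be minimizer: either its mass concentrates in a bounded part of $\cG_\ell$, and then the renormalized restriction to $\cG$ converges strongly in $H^1(\cG)$ to a ground state of $\cE_\a(\mu,\cG)$, contradicting the hypothesis on $\cG$; or mass escapes to infinity along a half-line, and then $u$ is asymptotically a translate of the $\R$-soliton detached from the fixed pendant, so $\cE_\a(\mu,\R)$ is only approached, never attained on $\cG_\ell$.

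\emph{Main obstacle.} I expect the genuinely hard step to be the threshold lower bound above, i.e. quantifying the minimal length a pendant must have before it can lower the energy below $\cE_\a(\mu,\R)$. This requires a careful, length-dependent analysis of how much of a soliton profile a pendant of length $\ell$ can accommodate while meeting the Kirchhoff/Neumann condition at the tip, in the spirit of the threshold computations of \cite[Section 4]{AST3}; the difficulty absent in the purely subcritical setting is that the critical term $\tfrac16\|u\|_{L^6(\cG_\ell)}^6$ scales exactly as the kinetic energy, so the surgery must be arranged to keep it under control uniformly as $\ell\to 0$. A secondary technical nuisance is that, because $\cG_\ell$ carries a terminal point, the symmetric rearrangement onto $\R$ used to exclude attainment is not directly available when the maximum of $u$ sits at the tip, and that configuration has to be handled separately by comparison with $\R^+$, using $\cE_\a(\mu,\R^+) < \cE_\a(\mu,\R)$.
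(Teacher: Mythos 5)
Your setup is correct and matches the paper through all the preliminary reductions: the contradiction over a sequence $\ell_n \to 0$, the identity $\cE_\a(\mu,\cG) = \cE_\a(\mu,\R)$ forced by non-existence on $\cG$ together with Theorem \ref{thm: main foc} and \eqref{dis gs}, the uniform $H^1$ and $L^\infty$ bounds independent of the graph (Lemma \ref{lem: unif est}), and the observation that the renormalized restrictions $v_n=(\mu/\sigma_n)^{1/2}u_n|_{\cG}$ form a minimizing sequence for $\cE_\a(\mu,\cG)$. The genuine gap is your ``crux'': you reduce the proposition to a threshold inequality $E_\a(u,\cG_\ell)\ge \cE_\a(\mu,\R)$ valid for \emph{every} $u\in H^1_\mu(\cG_\ell)$ once $\ell<\tilde\ell$, you acknowledge this is the hard step, and you do not prove it --- you only gesture at a ``length-dependent surgery''. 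That uniform-in-$u$ estimate is stronger than what is needed, and it is not clear the sketched surgery delivers it: an arbitrary competitor need not place a soliton cap on the pendant, so one would have to quantify the achievable gain over all profiles, critical term included, which is precisely the difficulty you flag without resolving.

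The paper's proof (via Proposition \ref{thm 4.3 ast}) sidesteps this entirely by controlling only the putative ground states $u_n$ themselves. Since $\cE_\a(\mu,\cG)$ has no minimizer, alternative ($i$) of Lemma \ref{lem: compactness} applies to $\{v_n\}$, so $u_n\to 0$ in $L^\infty_{\loc}(\cG)$ and hence $M_n:=\|u_n\|_{L^\infty(\mathcal{K}_n)}\to 0$. Plugging this into the energy comparison \eqref{33ast3} bounds the energetic gain of the pendant by $\mu\bigl(M_n^4/6+\alpha M_n^{p-2}/p\bigr)(\mu-\sigma_n)$, which would have to exceed the fixed positive quantity $(-\cE_\a(\mu,\R))(\mu-\sigma_n)$ --- impossible as $M_n\to 0$. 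This quantitative step is where the proposition is actually proved, and it is absent from your proposal. A secondary issue: your closing non-attainment dichotomy (``mass concentrates or escapes'') is phrased for a minimizing sequence but applied to a single minimizer $u$, for which ``mass escaping to infinity'' has no meaning; the paper instead splits on whether $\sigma_n=\mu$ (then $u_n|_{\cG}$ is itself a ground state on $\cG$, a contradiction) or $\sigma_n<\mu$ (then \eqref{33ast3} applies).
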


\begin{remark}
Let $\cG$ be a star-graph, that is a graph made of $N \ge 3$ half-lines glued together at their common origin. This class of graphs represents a prototypical non-trivial model for non-compact graphs with a cycle covering, and for this reason their study attracted a lot of attention in the last decade (see \cite{ACFN12, ACFN14} and references therein). By Theorem \ref{thm: cycle}, $\cE_\a(\mu, \cG)$ does not admit a ground state, for any $\mu \in (0,\mu_{\R^+})$. Thus, Propositions \ref{prop: ex tip} and \ref{prop: non ex tip} imply that ground states on $\cG_{\ell}$ do exist if the length $\ell$ of the terminal edge is larger than $\bar \ell$, and do not exist if $\ell < \tilde \ell$.  

It would be interesting to prove that $\tilde \ell = \bar \ell$ (this is the case for the homogeneous problem, see \cite[Theorem 4.4]{AST3}).
\end{remark}

\begin{remark}
One can compare the above statements with the main results in \cite{AST1, AST3} (more specifically, cf. Theorem \ref{thm: main foc} and \cite[Theorem 3.3]{AST3}, Corollary \ref{cor: foc} and \cite[Corollary 3.4]{AST3}, 
Theorem \ref{thm: cycle} with \cite[Theorem 2.5]{AST1}, Proposition \ref{prop: ex tip} with \cite[Proposition 4.1]{AST3}, and Proposition \ref{prop: non ex tip} with \cite[Theorem 4.4]{AST3}). As anticipated, it emerges that for the NLS energy \eqref{def E} in the focusing case $\alpha>0$ critical and subcritical effects are combined in the following way: as in the critical case, there exists a critical threshold $\tilde \mu_{\cG}$ for the mass, above which the ground state energy level is $-\infty$. For masses below $\tilde \mu_{\cG}$, ground states may exist or not, and subcritical methods can be often adapted to answer this question.
\end{remark}

%{\color{red} Da fare: - sarebbe interessante arrivare a una caratterizzazione precisa nel caso della proposizione \ref{prop: non ex tip}, del tipo c'\`e un ground state se e solo se il terminal edge ha lunghezza $\ge \bar \ell$. \\
%- Sarebbe interessante dire qualcosa su grafi con una solo semiretta, nello spirito di \cite[Sezione 5]{AST3}. L\`i usano abbastanza propriet\`a di scaling, che qui non abbiamo.}

\subsection*{Defocusing lower order term: $\alpha<0$}

The defocusing case presents, again, the combination of critical and subcritical effects, but in a quite different way with respect to the focusing one. A preliminary result is the following.

\begin{proposition}\label{prop: basic intro}
Let $\cG$ be a non-compact metric graph, $p \in (2,6)$, $\alpha<0$. Then
\[
\cE_\alpha(\mu, \cG) = 0 \quad \text{if $\mu \in (0, \mu_\cG]$}, \quad \text{and}\quad \cE_\alpha(\mu, \cG) =-\infty \quad \text{if $\mu > \mu_\R$}
\]
Moreover, the infimum is never achieved when $\mu \in (0,\mu_{\cG}]$.
\end{proposition}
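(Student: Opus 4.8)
The plan is to exploit, for $\alpha<0$, the elementary decomposition
\[
E_\alpha(u,\cG) = E_0(u,\cG) + \frac{|\alpha|}{p}\,\lp{u}{p}^p \;\ge\; E_0(u,\cG),
\]
together with the behaviour of the homogeneous ground state level $\cE_0(\mu,\cG)$ recalled in $(i)$--$(iii)$. With this in hand all three assertions follow from suitable scaling families.

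For $\mu\in(0,\mu_\cG]$ the lower bound $\cE_\alpha(\mu,\cG)\ge 0$ is immediate from the displayed inequality and $\cE_0(\mu,\cG)=0$ (point $(i)$). For the reverse inequality I would use a \emph{spreading} family: fix $v$ with compact support in the interior of an unbounded edge and $\lp{v}{2}^2=\mu$, and set $v_n(x):=n^{-1/2}v(x/n)$, extended by zero. Then $\lp{v_n}{2}^2=\mu$, while
\[
\lp{v_n'}{2}^2=n^{-2}\lp{v'}{2}^2,\qquad \lp{v_n}{6}^6=n^{-2}\lp{v}{6}^6,\qquad \lp{v_n}{p}^p=n^{1-p/2}\lp{v}{p}^p .
\]
Since $p>2$, all three terms vanish as $n\to\infty$, so $E_\alpha(v_n,\cG)\to 0$ and hence $\cE_\alpha(\mu,\cG)\le 0$.

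For $\mu>\mu_\R$ the positive subcritical term has to be overwhelmed by the focusing critical one, which calls for \emph{concentration} instead. I would first exhibit a compactly supported $w\in H^1(\R)$ with $\|w\|_{L^2(\R)}^2=\mu$ and $E_0(w,\R)<0$: starting from the $L^2$-critical soliton $Q$ of mass $\mu_\R$ (for which $E_0(Q,\R)=0$, i.e.\ $\tfrac12\|Q'\|_{L^2(\R)}^2=\tfrac16\|Q\|_{L^6(\R)}^6$) and taking $w_0=cQ$ with $c=\sqrt{\mu/\mu_\R}>1$ gives mass $\mu$ and $E_0(w_0,\R)=\tfrac16\|Q\|_{L^6(\R)}^6\,(c^2-c^6)<0$; truncating the exponential tail yields the desired compactly supported $w$. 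Placing on a fixed edge the concentrated family $w_\lambda(x):=\lambda^{1/2}w(\lambda(x-x_0))$, centred at an interior point $x_0$ so that its shrinking support fits inside the edge for $\lambda$ large (the zero extension being admissible because $w_\lambda$ vanishes at the support boundary), one computes
\[
E_\alpha(w_\lambda,\cG)=\lambda^2\,E_0(w,\R)+\frac{|\alpha|}{p}\,\lambda^{p/2-1}\,\|w\|_{L^p(\R)}^p .
\]
As $E_0(w,\R)<0$ and $p/2-1<2$, the quadratic term dominates and $E_\alpha(w_\lambda,\cG)\to-\infty$ as $\lambda\to+\infty$, giving $\cE_\alpha(\mu,\cG)=-\infty$.

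Finally, non-attainment for $\mu\in(0,\mu_\cG]$ follows from the same decomposition. If some $u\in H^1_\mu(\cG)$ realised $E_\alpha(u,\cG)=\cE_\alpha(\mu,\cG)=0$, then $E_0(u,\cG)=-\frac{|\alpha|}{p}\lp{u}{p}^p\le 0$; but $E_0(u,\cG)\ge\cE_0(\mu,\cG)=0$, so necessarily $E_0(u,\cG)=0$ and $\lp{u}{p}^p=0$, whence $u\equiv 0$, contradicting $\lp{u}{2}^2=\mu>0$. The only genuinely delicate step is the concentration argument for $\mu>\mu_\R$: one must guarantee a negative homogeneous energy profile with compact support and relocate it onto a single edge while preserving both the mass and continuity at the vertices. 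Everything else is a direct consequence of the monotonicity $E_\alpha\ge E_0$ and the recalled structure of $\cE_0(\mu,\cG)$.
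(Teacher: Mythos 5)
Your argument is correct, and it reaches all three conclusions by a route that is more self-contained than the one in the paper. For the identity $\cE_\alpha(\mu,\cG)=0$ on $(0,\mu_\cG]$ and the non-attainment, the paper does not use the decomposition $E_\alpha=E_0+\frac{|\alpha|}{p}\lp{u}{p}^p$ together with $\cE_0(\mu,\cG)=0$; instead it invokes the Gagliardo--Nirenberg bound \eqref{E da sotto}, which for $\alpha<0$ and $\mu\le\mu_\cG$ gives directly $E_\alpha(u,\cG)\ge \frac{|\alpha|}{p}C_p(\cG)\mu^{\frac{p+2}{4}}\lp{u'}{2}^{\frac{p-2}{2}}>0$ for every nonzero $u$ (strict since $u'\not\equiv 0$ on a non-compact graph), so positivity of the infimum's lower bound and non-attainment come in one stroke; your two-step version ($E_0(u,\cG)\ge\cE_0(\mu,\cG)=0$ plus $\lp{u}{p}^p>0$) is equally valid. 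For the upper bound $\cE_\alpha(\mu,\cG)\le 0$ and for $\cE_\alpha(\mu,\cG)=-\infty$ when $\mu>\mu_\R$, the paper simply quotes $\cE_\alpha(\mu,\cG)\le\cE_\alpha(\mu,\R)$ from \eqref{dis gs} and Theorem \ref{thm: R} (i.e.\ the result of \cite{So1} on the line), whereas you reconstruct the underlying mechanisms explicitly: a mass-preserving spreading family for the vanishing of the infimum, and a concentrating family built from an amplified, truncated soliton for the blow-up to $-\infty$. Your computation of the scaling exponents is right ($\lambda^2$ versus $\lambda^{p/2-1}$ with $p<6$), and your identification of the concentration step as the only delicate point is accurate; the truncation-plus-renormalization needed to get a compactly supported profile of mass exactly $\mu$ with $E_0<0$ is the same density argument the paper records in Remark \ref{rmk: density}. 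What the paper's route buys is brevity (two displayed lines once Theorem \ref{thm: R} is available); what yours buys is independence from the external result on $\R$, at the cost of redoing the constructions that underlie it.
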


Notice that $\cE_\alpha(\mu, \cG)$ is never attained when $\mu \le \mu_{\cG}$ and $\alpha<0$, while the same range of masses is favorable for existence when $\alpha>0$. 

From Proposition \ref{prop: basic intro}, in studying the minimization problem $\cE_\a(\mu, \cG)$ we can focus on the range $(\mu_{\cG}, \mu_{\R}]$. According to the classification in \cite{AST2}, this interval is empty if $\cG$ is of type (2), and may be empty also for some graphs of type (4); plainly, for these classes of graphs ground states never exist. Concerning graphs of type (1), (3), and graphs of type (4) with $\mu_{\cG}<\mu_{\R}$, we have the following:

\begin{theorem}\label{thm: main def}
Let $\cG$ be a non-compact metric graph, $p \in (2,6)$, $\alpha<0$. Then the following alternative occurs:
\begin{itemize}
\item[($i$)] if $\mu_{\cG} = \tilde \mu_{\cG} = \mu_{\R^+}$, then $\cE_\a(\mu,\cG) = -\infty$ for every $\mu \in (\mu_{\cG}, \mu_{\R}]$ (and, plainly, a ground state does not exist).
\item[($ii$)] If $\mu_{\cG} < \tilde \mu_{\cG} = \mu_{\R}$, then for every $\mu \in  (\mu_{\cG}, \mu_{\R}]$ there exists $ \bar \alpha<0$ depending on $\mu$ and $\cG$ (possibly equal to $-\infty$) such that:
\begin{itemize}
\item[($a$)] if $\alpha \in (\bar \alpha,0)$, then $\cE_\a(\mu,\cG) \in (-\infty,0)$, and the infimum is achieved;
\item[($b$)] if $\alpha < \bar \alpha$, then $\cE_{\alpha}(\mu, \cG) = 0$, and the infimum is not achieved.
\end{itemize}
Furthermore, if $\mu<\mu_{\R}$, then $\bar \alpha>-\infty$, and $\cE_{\bar \alpha}(\mu,\cG) = 0$.\end{itemize}
\end{theorem}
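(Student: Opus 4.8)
The plan is to treat the two alternatives separately and to organise case (ii) around the monotone dependence of $\cE_\a(\mu,\cG)$ on $\a$. Throughout I write, for $\a<0$,
\[
E_\a(u,\cG)=E_0(u,\cG)+\tfrac{|\a|}{p}\|u\|_{L^p(\cG)}^p,\qquad E_0(u,\cG):=\tfrac12\|u'\|_{L^2(\cG)}^2-\tfrac16\|u\|_{L^6(\cG)}^6 .
\]
For case (i) the graph has a terminal point, so $\tilde\mu_\cG=\mu_{\R^+}=\mu_\cG$ and the range is $\mu\in(\mu_{\R^+},\mu_\R]$. I would use the tip as in \cite{AST2}: since $\mu>\mu_{\R^+}$ and $\cE_0(\mu,\R^+)=-\infty$, there is a compactly supported $v$ on the terminal edge with $\|v\|_{L^2}^2=\mu$ and $\tfrac12\|v'\|_2^2-\tfrac16\|v\|_6^6<0$. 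Taking the mass-preserving dilations $v_\sigma(x)=\sqrt\sigma\,v(\sigma x)$, which for large $\sigma$ are supported inside the terminal edge, a direct computation gives $E_\a(v_\sigma,\cG)=\sigma^2\big(\tfrac12\|v'\|_2^2-\tfrac16\|v\|_6^6\big)+\tfrac{|\a|}{p}\sigma^{p/2-1}\|v\|_p^p$. As $p\in(2,6)$ the critical exponent $2$ strictly exceeds $p/2-1$, so the negative critical term dominates and $E_\a(v_\sigma,\cG)\to-\infty$, i.e. $\cE_\a(\mu,\cG)=-\infty$.

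For case (ii) I would first record the two-sided bound $\cE_\a(\mu,\cG)\in[\cE_0(\mu,\cG),0]$: the lower bound holds because $E_\a\ge E_0$ and $\cE_0(\mu,\cG)>-\infty$ for $\mu\le\mu_\R$ on a tip-free graph by \cite{AST2}, while the upper bound follows by spreading mass along a half-line. For fixed $u$ the map $\a\mapsto E_\a(u,\cG)$ is decreasing, hence $\a\mapsto\cE_\a(\mu,\cG)$ is non-increasing on $(-\infty,0)$; an $\eps$-argument gives $\lim_{\a\to0^-}\cE_\a(\mu,\cG)=\cE_0(\mu,\cG)<0$ (recall $\mu>\mu_\cG$). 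Setting $\bar\a:=\inf\{\a<0:\cE_\a(\mu,\cG)<0\}$, monotonicity makes $\{\a<0:\cE_\a<0\}$ an up-interval $(\bar\a,0)$, so $\cE_\a<0$ on $(\bar\a,0)$, whereas for $\a<\bar\a$ we get $\cE_\a\ge0$, hence $\cE_\a=0$ by the upper bound. On $(\bar\a,0)$ we have $\cE_\a(\mu,\cG)<0=\cE_\a(\mu,\R)$ (cf. Section \ref{sec: pre}), so existence of a minimizer follows from the concentration-compactness argument behind Theorem \ref{thm: main foc}, whose proof only uses the strict gap below the energy at infinity together with $\cE_\a>-\infty$, not the sign of $\a$; this gives (a). For (b), if $\cE_\a=0$ were attained at some $u_0\neq0$ with $\a<\bar\a$, then for $\a'\in(\a,\bar\a)$ one has $E_{\a'}(u_0,\cG)=E_\a(u_0,\cG)+\tfrac{|\a'|-|\a|}{p}\|u_0\|_{L^p(\cG)}^p<0$, so $\cE_{\a'}(\mu,\cG)<0$, contradicting $\cE_{\a'}=0$.

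It remains to prove the ``Furthermore''. I would deduce $\cE_{\bar\a}(\mu,\cG)=0$ from the finiteness of $\bar\a$ by the same trick: were $\cE_{\bar\a}<0$, part (a) would give a minimizer $u_0$, and letting $\a\uparrow\bar\a$ the relation $E_\a(u_0,\cG)\to\cE_{\bar\a}<0$ would force $\cE_\a<0$ for some $\a<\bar\a$, a contradiction; with $\cE_{\bar\a}\le0$ this yields $\cE_{\bar\a}=0$. The crux is thus $\bar\a>-\infty$ for $\mu<\mu_\R$, which I would reduce to the functional inequality
\[
\tfrac16\|u\|_{L^6(\cG)}^6-\tfrac12\|u'\|_{L^2(\cG)}^2\le C\,\|u\|_{L^p(\cG)}^p\qquad\text{for all }u\in H^1_\mu(\cG),
\]
valid for $\mu<\mu_\R$ with $C=C(\mu,\cG)$: it forces $E_\a(u,\cG)\ge0$ once $|\a|/p\ge C$, hence $\cE_\a=0$ and $\bar\a\ge-pC$. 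Since the left-hand side is bounded above by $-\cE_0(\mu,\cG)<\infty$, the inequality is equivalent to a uniform lower bound $\|u\|_{L^p(\cG)}^p\ge c_0>0$ on the set where that left-hand side is positive.

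This last lower bound is the main obstacle, and it is exactly where $\mu<\mu_\R$ enters. The mechanism I would exploit is that $\|u\|_{L^p(\cG)}\to0$ forces, via H\"older on the compact core $K$ (of finite length), $\|u\|_{L^2(K)}\to0$, so the mass escapes onto the half-lines; there the graph is locally indistinguishable from $\R$, and since the local mass stays $\le\mu<\mu_\R$ the sharp one-dimensional Gagliardo--Nirenberg inequality makes the critical excess non-positive, contradicting positivity of the left-hand side. Making this rigorous requires the concentration analysis of \cite{AST2}, separating configurations concentrating at a vertex (for which $\|u\|_{L^p(\cG)}$ is large) from those spreading along the half-lines, and I expect this to be the genuinely delicate step; the borderline failure of the $\R$-inequality at $\mu=\mu_\R$ is precisely what leaves open $\bar\a=-\infty$ there. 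A related subtlety, already in (a), is the $H^1$-boundedness of minimizing sequences: the critical term precludes a direct Gagliardo--Nirenberg bound, and blow-up must be excluded using $\cE_0(\mu,\cG)>-\infty$ for $\mu\le\mu_\R$, again following \cite{AST2}.
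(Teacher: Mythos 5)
Your treatment of alternative ($i$), the definition and elementary properties of $\bar\a$, the non-attainment for $\a<\bar\a$, and the identity $\cE_{\bar\a}(\mu,\cG)=0$ all match the paper's proof. However, there are two genuine gaps in alternative ($ii$).

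First, for the existence claim in ($a$) you assert that the concentration-compactness argument behind Theorem \ref{thm: main foc} ``only uses the strict gap below the energy at infinity together with $\cE_\a>-\infty$, not the sign of $\a$.'' That is not accurate: the dichotomy case ($0<m<\mu$) in Lemma \ref{lem: compactness} is excluded via the strict subadditivity of Lemma \ref{lem: sub}, whose proof (the comparison $-\frac{\a\theta^{p/2}}{p}\lp{u_n}{p}^p<-\frac{\a\theta}{p}\lp{u_n}{p}^p$ for $\theta>1$) requires $\a>0$; likewise the $H^1$-bound on minimizing sequences is not a consequence of $\cE_0(\mu,\cG)>-\infty$ alone. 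The paper replaces this machinery entirely: boundedness comes from the modified Gagliardo--Nirenberg inequality combined with the strict negativity $E_\a(u,\cG)<\cE_\a(\mu,\cG)/2<0$ (which forces $\theta_u\ge c>0$ and then $\lp{u'}{2}\le C$), and loss of mass is handled by the estimate $E_0(u_n-u,\cG)\ge o(1)$ from \cite[Lemma 4.2]{AST2} for weakly null sequences of mass $\le\mu_\R$, followed by the scaling argument of Lemma \ref{lem: conv se non 0} (if $m=\lp{u}{2}^2<\mu$, then $E_\a(\sqrt{\mu/m}\,u,\cG)<\cE_\a(\mu,\cG)$, a contradiction). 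You would need to supply arguments of this kind; the focusing lemmas cannot be cited as black boxes.

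Second, your route to $\bar\a>-\infty$ for $\mu<\mu_\R$ reduces the functional inequality $\frac16\lp{u}{6}^6-\frac12\lp{u'}{2}^2\le C\lp{u}{p}^p$ to a uniform lower bound $\lp{u}{p}^p\ge c_0>0$ on $\{u\in H^1_\mu(\cG):E_0(u,\cG)<0\}$, which you leave unproved. Besides being the step you acknowledge as missing, that lower bound is in general \emph{false}: on a graph of type (3) with $\mu\in(\mu_{\R^+},\mu_\R)$, flattened half-solitons $\sqrt{\lambda}\,w(\lambda x)$ of mass $\mu$ placed along the half-line (suitably matched on the compact core) satisfy $E_0\approx\lambda^2E_0(w,\R^+)<0$ while $\lp{u}{p}^p=O(\lambda^{\frac{p-2}{2}})\to0$ as $\lambda\to0^+$, so the claimed equivalence fails and your plan cannot be completed. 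The paper's argument avoids any such lower bound: from $E_\a(u,\cG)<0$ and $\mu<\mu_\R$ the modified Gagliardo--Nirenberg inequality gives $\lp{u'}{2}\le C(\mu,\cG)$, hence $\lp{u}{\infty}\le C$, hence by interpolation $\lp{u}{6}^6\le\lp{u}{\infty}^{6-p}\lp{u}{p}^p\le C\lp{u}{p}^p$; then $\frac{|\a|}{p}\lp{u}{p}^p<\frac16\lp{u}{6}^6\le C\lp{u}{p}^p$ and one simply divides by $\lp{u}{p}^p>0$ (no uniform positivity needed) to conclude $|\a|<pC$.
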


It is an interesting open problem to establish whether there are ground states for $\cE_{\bar \alpha}(\mu,\cG) = 0$ or not, and to understand if $\bar \alpha>-\infty$ also when $\mu = \mu_{\R}$.

%
%\begin{theorem}\label{thm: main def tip}
%Let $\cG$ be a non-compact metric graph with a terminal edge, $p \in (2,6)$, $\alpha<0$. Then $\cE_\a(\mu, \cG) = -\infty$ for every $\mu>\mu_{\R^+}=\mu_{\cG}$.
%\end{theorem}
%
%
%%assuming $\alpha<0$, by Proposition \ref{prop: basic} ground states with mass $\mu$ may exist only if $\mu_{\cG}< \mu_{\R}$ and $\mu \in (\mu_{\cG}, \mu_{\R}]$. For this range we have the following statement:
%
%\begin{theorem}\label{thm: main def no tip}
%Let $\cG$ be a non-compact metric graph without terminal edges, $p \in (2,6)$, $\alpha<0$. Suppose that $\mu_{\cG}<\mu_{\R}$, and let $\mu \in (\mu_{\cG}, \mu_{\R}]$. Then there exists $0<\bar \alpha_1 \le \bar \alpha_2$ (depending on $\mu$ and $\cG$) such that:
%\begin{itemize}
%\item[($i$)] if $|\alpha|< \bar \alpha_1$, then $\cE_\a(\mu,\cG) \in (-\infty,0)$, and the infimum is achieved;
%\item[($ii$)] if $|\alpha| \ge \bar \alpha_2$, then $\mu \in (\mu_{\cG}, \tilde \mu_{\cG}]$, then $\cE_{\alpha}(\mu, \cG) = 0$, and it is not achieved.
%\end{itemize}
%\end{theorem}

\begin{remark}\label{rmk: main def}
Recalling again the classification in \cite{AST2} and the definition of $\tilde \mu_{\cG}$, we can better describe Theorem \ref{thm: main def}. If $\cG$ has a terminal edge, then $\mu_{\cG} = \tilde \mu_{\cG}= \mu_{\R^+}$, and hence alternative ($i$) takes place: again, we have non-existence of ground states for any mass. If instead $\cG$ is a graph of type (3), or of type (4) with $\mu_{\cG}<\mu_{\R}$ (for example, this is the case for the signed-post graph), then alternative ($ii$) holds, and we have existence of negative energy ground states for an interval of masses with positive lower bound. This is similar to what happens for the critical homogeneous problem, see \cite[Section 3]{AST2}.

%Theorem \ref{thm: main def tip} is focused on graphs with a terminal edge, i.e. of type (1) according to the classification in \cite{AST2}. In this case $\mu_{\cG}<\mu_{\R}$, but ground states never exist. 
%
%Theorem \ref{thm: main def no tip} includes three classes of graphs: graphs which admits a cycle covering - of type (2), graphs with only one half-line - of type (3), and graphs of type (4), i.e. graphs without terminal edges, which cannot be covered by cycle, and have at least two half-lines. Now, if $\cG$ is of type (2), then $\mu_\cG = \mu_{\R}$, so that ground states never exist; if $\cG$ is of type (3), then $\mu_{\cG}=\mu_{\R^+}$, and Theorem \ref{thm: main def no tip} applies to give existence of ground states for small values of $|\alpha|$; finally, there are graphs $\cG$ of type (4) such that $\mu_{\cG}<\mu_{\R}$, such as the signed post-graph. Also in this case, Theorem \ref{thm: main def no tip} applies. 
%
%Recalling the definition of $\tilde \mu_{\cG}$, it comes out that a necessary condition for the existence of ground states in the defocusing case is that $\mu_{\cG}<\tilde \mu_{\cG}$, and $\mu \in (\mu_{\cG}, \tilde \mu_{\cG}]$.
%
\end{remark}

Theorem \ref{thm: main def} reveals a further deep difference between the focusing and the defocusing cases. While in the former we may have existence (or non-existence) for every $\alpha>0$, only depending on $\mu$ and $\cG$, in the latter one there may be existence of ground states with a certain mass provided that $|\alpha|$ is sufficiently small, while there is non-existence for large $|\alpha|$. Moreover, it seems that the defocusing case is somehow less favorable for existence of ground states (for graphs of type (1) and (2), ground states never exist).

A final observation, again in this direction, regards the existence of \emph{critical points} (not necessarily ground states) for the NLS energy. In particular, in absence of ground states one may try to find \emph{local} minimizers of the energy, giving stable solutions for the time-dependent equation. In the homogeneous case $\alpha=0$, some results are contained in \cite{NP, PSV}. The problem with combined nonlinearities is unexplored, and will be the object of future investigation. For the moment, we limit ourselves to the following result concerning the star-graph. %(that is a graph made of $N \ge 3$ half-lines glued together at their common origin). %This class of graphs represents a prototypical non-trivial model for non-compact graphs with a cycle covering, and for this reason their study attracted a lot of attention in the last decade. 

\begin{proposition}\label{prop: non ex star}
Let $p \in (2,6)$, $\alpha<0$, and let $\cG_N$ denote the star graph consisting of $N \ge 3$ half-lines glued together at their common origin. If $0<\mu \le \mu_{\R}$, then there is no critical point of $E_\a(\cdot, \cG_N)$ on $H^1_\mu(\cG_N)$. For general $\mu >0$, there is no local minimizer of $E_\a(\cdot, \cG_N)$ on $H^1_\mu(\cG_N)$.
\end{proposition}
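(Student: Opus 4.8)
The plan is to treat the two assertions by different methods, since they have genuinely different natures.

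For the non-existence of \emph{critical points} when $0<\mu\le\mu_{\R}$, I would start from the Euler--Lagrange description. On $\cG_N$ a critical point $u$ solves, on each half-line edge $i=1,\dots,N$, the equation $-u_i''+\lambda u_i=|u_i|^4u_i+\alpha|u_i|^{p-2}u_i$ with a common multiplier $\lambda$, together with continuity $u_i(0)=a$ and the Kirchhoff balance $\sum_{i=1}^N u_i'(0)=0$ at the vertex; moreover $u_i\in H^1(0,\infty)$ decays at infinity. Multiplying by $u_i'$ and integrating gives the first integral $(u_i')^2=F(u_i)$ on every edge, where $F(t):=\lambda t^2-\tfrac13|t|^6-\tfrac{2\alpha}{p}|t|^p$ and the constant vanishes because $u_i,u_i'\to0$ at infinity. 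Evaluating at the vertex yields $(u_i'(0))^2=F(a)$ \emph{independently of $i$}. If $a=0$ then $u_i'(0)=0$ and uniqueness forces $u\equiv0$, so $a\ne0$; an analysis of $F$ near the origin gives $\lambda\ge0$ (the borderline $\lambda=0$ producing algebraic, but still $L^2$, decay since $p<6$). Thus $|u_i'(0)|$ equals a common value $s:=\sqrt{F(a)}$ on all edges.

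The next step is the classification. From the first integral, each edge carries either a monotone decreasing tail (if $u_i'(0)=-s$) or a single hump that rises to the turning value $T$ and then decreases (if $u_i'(0)=+s$); these are the only $H^1$ trajectories issuing from height $a$, and the type is fixed by the sign. Kirchhoff then forces either $s=0$ (the symmetric ``peak'' solution, whose vertex value is the maximum $T$) or, only when $N$ is even, exactly $N/2$ edges of each type. In every admissible case, computing the mass via the change of variables $dx=du/\sqrt{F(u)}$ shows that the total mass equals $\tfrac N2\,m(\lambda)$, where $m(\lambda)=2\int_0^T u^2\,F(u)^{-1/2}\,du$ is precisely the mass of the whole-line soliton of frequency $\lambda$.

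It then remains to bound $m(\lambda)$ from below, and this is the heart of the matter. I would prove $m(\lambda)\ge\mu_{\R}$ whenever $\alpha<0$ as follows: after the mass-preserving normalization $\lambda=1$, the profile solves $-v''+v=v^5+\beta v^{p-1}$ with $\beta<0$, and the substitution $v=Tw$ together with the turning-point relation $\tfrac13T^6-T^2=\tfrac{2|\beta|}{p}T^p$ reduces the mass to $m=2\sqrt{3\tau}\int_0^1 w\,H(\tau,w)^{-1/2}\,dw$, where $\tau:=\tfrac13T^4$ and $H(\tau,w)=(1-w^{p-2})+\tau w^{p-2}(1-w^{6-p})$. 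Here $\beta<0$ is exactly equivalent to $\tau\ge1$, the value $\tau=1$ being the purely critical threshold at which $H(1,w)=1-w^4$ and $m=\mu_{\R}$; the elementary inequality $H(\tau,w)\le\tau(1-w^4)$, itself equivalent to $\tau\ge1$, then yields $m\ge\mu_{\R}$. Consequently every critical point has mass $\tfrac N2 m(\lambda)\ge\tfrac N2\mu_{\R}\ge\tfrac32\mu_{\R}>\mu_{\R}$, contradicting $\mu\le\mu_{\R}$. (Alternatively this lower bound can be quoted from the analysis of the problem on $\R$ recalled in Section~\ref{sec: pre}.) The ODE and Kirchhoff bookkeeping is routine; the quantitative soliton-mass estimate is the one genuine obstacle.

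For the second assertion I would use a soft scaling argument exploiting the special geometry of $\cG_N$. Since every edge is a half-line through the common vertex, the mass-preserving dilation $u_\sigma(x):=\sigma^{1/2}u(\sigma x)$ maps $H^1_\mu(\cG_N)$ continuously into itself, continuity at the vertex and the half-line structure being preserved; this is exactly where $\cG_N$ differs from graphs carrying bounded edges, whose lengths a dilation would alter. For any $u\in H^1_\mu(\cG_N)$ one computes $E_{\alpha}(u_\sigma,\cG_N)=a_1\sigma^2+a_2\sigma^{p/2-1}$, with $a_1=\tfrac12\int_{\cG_N}|u'|^2-\tfrac16\int_{\cG_N}|u|^6$ and $a_2=\tfrac{|\alpha|}{p}\int_{\cG_N}|u|^p>0$. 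If $u$ were a local minimizer, then $\sigma=1$ would minimize $g(\sigma):=E_{\alpha}(u_\sigma,\cG_N)$, so $g'(1)=0$, giving $a_1=-\tfrac{p-2}{4}a_2$; but then $g''(1)=\tfrac{(p-2)(p-6)}{4}\,a_2<0$ for $p\in(2,6)$, so $\sigma=1$ is in fact a strict local maximum of $g$, a contradiction. Hence no local minimizer exists for any $\mu>0$. This part is short, and the only delicate point is the admissibility of $u_\sigma$, which is precisely what the star-graph structure guarantees.
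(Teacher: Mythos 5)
Your treatment of the second assertion is essentially identical to the paper's: the mass-preserving dilation $u_\sigma(x)=\sigma^{1/2}u(\sigma x)$, admissible precisely because every edge of $\cG_N$ is a half-line through the vertex, the expansion $E_\a(u_\sigma,\cG_N)=a_1\sigma^2+a_2\sigma^{(p-2)/2}$ with $a_2>0$, and the observation that $g'(1)=0$ forces $g''(1)=\tfrac{(p-2)(p-6)}{4}a_2<0$. Nothing to change there.

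For the first assertion you take a genuinely different route. The paper avoids any phase-plane analysis: testing the stationary system against $u_i$ and against $xu_i'$ produces a Nehari and a Pohozaev identity, whose combination gives $\int_{\cG_N}|u'|^2=\tfrac13\int_{\cG_N}|u|^6+\a\tfrac{p-2}{2p}\int_{\cG_N}|u|^p$, hence $E_0(u,\cG_N)=\tfrac{\a(p-2)}{4p}\int_{\cG_N}|u|^p<0$; since the star admits a cycle covering, $\mu_{\cG_N}=\mu_{\R}$, so the Gagliardo--Nirenberg inequality forces $E_0(u,\cG_N)\ge0$ for $\mu\le\mu_{\R}$ --- a five-line contradiction. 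Your classification-plus-soliton-mass argument is correct in its main thrust: I checked that $H(\tau,w)\le\tau(1-w^4)$ reduces to $(\tau-1)(1-w^{p-2})\ge0$, and that $2\sqrt3\int_0^1 w(1-w^4)^{-1/2}\,dw=\pi\sqrt3/2=\mu_{\R}$; moreover your route proves strictly more, namely non-existence of critical points for all $\mu<\tfrac N2\mu_{\R}$. But it contains one genuine gap that you flag without closing: the case $\lambda=0$, which your own decay analysis admits (algebraic but still $H^1$ tails for $p<6$), is not covered by the normalization to $\lambda=1$ on which the estimate $m(\lambda)\ge\mu_{\R}$ rests. It must be handled separately; a direct computation with $F(u)=\tfrac13u^p\bigl(T^{6-p}-u^{6-p}\bigr)$ gives $m(0)=2\sqrt3\,\pi/(6-p)>\mu_{\R}$ for $p>2$, so the conclusion survives, but as written the step is missing. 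A smaller point: the claim that the two listed trajectories exhaust the $H^1$ orbits issuing from height $a$ uses that $F>0$ on $(0,T)$ and that $u_i$ cannot change sign (a zero of $u_i$ forces $u_i'=0$ there by the first integral, hence $u_i\equiv0$ by uniqueness); this is routine but should be said.
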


This is a further indication of the rigidity of the problem in the defocusing case $\alpha<0$.
%{\color{red} Da fare: sarebbe bello dimostrare che $\alpha_1=\alpha_2$, cio\`e avere una caratterizzazione sharp.}

\subsection*{Structure of the paper} In Section \ref{sec: pre} we collect several preliminary results which will be frequently used. In Section \ref{sec: foc} we present the proof of the main results with focusing perturbation ($\alpha>0$), while Section \ref{sec: def} concerns the defocusing case ($\alpha<0$).

%\subsection{Asymptotic of ground states with respect to $\alpha$}
%
%We analyze now the behavior of the ground states with respect to $\alpha$, in particular in the limit as $\alpha \to 0$. Given a non-compact graph $\cG$, we fix $\mu \in (0, \tilde \mu_{\cG})$, and suppose at first that $\cE_\a(\mu,\cG) \in (-\infty,0)$ is attained for every $\alpha>0$ sufficiently small. We denote by $u_\alpha$ the corresponding ground state, and try to understand the asymptotic behavior of $u_\alpha$ as $\alpha \to 0^+$. 

\section{Preliminaries}\label{sec: pre}

In this section we collect some preliminary results which will be frequently used throughout the rest of the paper.

\subsection{Gagliardo-Nirenberg inequality and critical mass}

First of all, we recall that for any $q >2$ and any non-compact graph $\cG$ the following Gagliardo-Nirenberg inequality holds (see e.g. \cite[Proposition 2.1]{AST3}): there exists an optimal constant $C_q(\cG)>0$ depending on $q$ and $\cG$ such that
\[
\lp{u}{q}^q \le  C_q(\cG) \lp{u}{2}^{\frac{q+2}2} \lp{u'}2^{\frac{q-2}2} \qquad \forall u \in H^1(\cG).
\]
Precisely, $C_q(\cG)$ is characterized as
\[
C_q(\cG) = \sup_{u \in H^1(\cG) \setminus \{0\}} \frac{\lp{u}{q}^q}{\lp{u}{2}^{\frac{q+2}2} \lp{u'}2^{\frac{q-2}2}}.
\]
A relevant role is played by the optimal constant obtained for the $L^2$-critical exponent $q=6$: indeed, the critical mass $\mu_{\cG}$ associated with $\cG$ is precisely equal to $\mu_{\cG}= \sqrt{ {3}/{C_6(\cG)} }$, see \cite[Section 2]{AST2}.

Dealing with a critical problem, it is also useful to recall the following \emph{modified Gagliardo-Nirenberg inequality}: %, which is a crucial ingredient in proving the existence of negative energy ground states for graphs of type 3 and 4.

\begin{lemma}[Lemma 4.4 in \cite{AST2}]\label{lem: mod GN}
Assume that $\cG$ is non-compact and has no terminal point, and let $u \in H^1_\mu(\cG)$ for some $\mu \in (0,\mu_\R]$. Then there exists $\theta_u \in [0,\mu]$ such that
\[
\lp{u}{6}^6\le 3\left(\frac{\mu-\theta_u}{\mu_{\R}}\right)^2 \lp{u'}{2}^2 + C_{\cG} \theta_u^\frac12,
\]
with $C_{\cG} >0$ depending only on $\cG$.
\end{lemma}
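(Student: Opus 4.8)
The plan is to combine a symmetric rearrangement argument with the sharp one–dimensional Gagliardo–Nirenberg inequality on $\R$, keeping careful track of the levels of $u$ at which the absence of terminal points fails to force a ``doubling'' of the level sets. Throughout I may assume $u \ge 0$, since passing from $u$ to $|u|$ preserves every $L^q$ norm and does not increase $\lp{u'}2$. Recall that on the line the optimal constant is $C_6(\R) = 3/\mu_\R^2$, so that the target inequality is, morally, the assertion that a portion of $u$ of mass $\mu-\theta_u$ ``behaves like a function on $\R$'', while the residual mass $\theta_u$ is disposed of by a crude estimate.

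First I would set up the level-set machinery. Writing $m(t) = \meas\{u > t\}$ for the distribution function and $N(t)$ for the number of preimages $\#\{x \in \cG : u(x) = t\}$, the coarea formula together with Cauchy–Schwarz gives the rearrangement estimate
\[
\lp{u'}2^2 \ge \int_0^{\|u\|_\infty} \frac{N(t)^2}{-m'(t)}\,dt, \qquad \lp{u}{6}^6 = \int_0^{\|u\|_\infty} 6 t^5 m(t)\,dt .
\]
The gain of a factor comparable to the one available on $\R$ (rather than on the half–line) is exactly the gain coming from the inequality $N(t) \ge 2$: when this holds for all $t$, the decreasing rearrangement of $u$ onto $\R^+$ has Dirichlet energy bounded by $\tfrac14 \lp{u'}2^2$, which is precisely the Pólya–Szegő inequality for the \emph{symmetric} rearrangement onto $\R$, and the conclusion follows with $\theta_u=0$.

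The role of the hypothesis is then to quantify the failure of $N(t) \ge 2$. Here I would argue geometrically: if $t$ exceeds the maximal value $\max_{\mathrm v} u(\mathrm v)$ attained by $u$ at the (finitely many) vertices, then every connected component of the superlevel set $\{u > t\}$ is an open subinterval contained in the interior of a single edge, hence has two distinct boundary points (it cannot run off to infinity, as $u \to 0$ there, nor terminate at a degree–one vertex, as there are none), so $N(t) \ge 2$. Thus the ``bad'' levels with $N(t)=1$ are confined to the bounded range $t \le \max_{\mathrm v} u(\mathrm v)$, and they correspond precisely to superlevel sets with a single boundary point — a configuration ruled out by a terminal point but possible, at low levels, when $\cG$ has essentially one channel to infinity. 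Splitting $u$ accordingly, I would rearrange the part supported over the doubling levels onto $\R$, producing a comparison function $z \in H^1(\R)$ with $\|z\|_{L^2(\R)}^2 = \mu - \theta_u$ and $\|z'\|_{L^2(\R)}^2 \le \lp{u'}2^2$, where $\theta_u \in [0,\mu]$ is the $L^2$–mass carried on the single–preimage levels. Applying $\|z\|_{L^6(\R)}^6 \le (3/\mu_\R^2)\|z\|_{L^2(\R)}^4 \|z'\|_{L^2(\R)}^2$ then yields the first term $3\bigl((\mu-\theta_u)/\mu_\R\bigr)^2 \lp{u'}2^2$.

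The hard part will be the remainder, namely controlling $\lp{u}{6}^6 - \|z\|_{L^6(\R)}^6$ by $C_\cG\,\theta_u^{1/2}$ with a constant depending only on the graph and, crucially, \emph{not} on $\lp{u'}2$ or on $\mu$. The single–preimage levels live, in a precise sense, over the compact core of $\cG$ (the union of its bounded edges, of total length $\Lambda_\cG$), and the discarded part is low–lying; I would therefore estimate its $L^6$ contribution by a one–dimensional Sobolev/Gagliardo–Nirenberg inequality on an interval of length $\Lambda_\cG$, which is what produces both the geometric constant $C_\cG$ and a positive power of the small mass $\theta_u$. Making this estimate genuinely gradient–independent — and checking that the bookkeeping of $\theta_u$ is consistent with the rearrangement carried out in the previous step — is the main technical obstacle; the constraint $\mu \le \mu_\R$ enters precisely to keep $z$ in the mass regime where the sharp inequality on the line is not vacuous.
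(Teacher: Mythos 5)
A preliminary remark: the paper does not actually prove this statement. It is imported verbatim, with attribution, as Lemma~4.4 of \cite{AST2}, so there is no internal proof to compare against; what you have written is an attempted reconstruction of the argument in that reference, and it should be judged as such.

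Your overall strategy is the correct one: the coarea identities, the observation that $N(t)\ge 2$ for a.e.\ $t>\max_{\mathrm v}u(\mathrm v)$ (each component of $\{u>t\}$ then contains no vertex, hence is an interval strictly inside one edge with two boundary points --- note that at these levels the absence of degree-one vertices is irrelevant, since the component cannot reach \emph{any} vertex), the symmetric rearrangement onto $\R$ of the ``doubling'' part, and the sharp constant $3/\mu_{\R}^2$ all match the scheme of \cite{AST2}. The problem is that the step you yourself flag as ``the main technical obstacle'' is precisely where the content of the lemma lies, and the route you propose for it would fail. First, the discarded mass does \emph{not} live over the compact core: when $N(t)=1$ the \emph{superlevel} set $\{u>t\}$ is a bounded connected set containing a vertex, but the low-lying part of $u$ that you throw away is carried by the complementary sublevel region, which spreads over the half-lines; so a Sobolev/Gagliardo--Nirenberg inequality ``on an interval of length $\Lambda_{\cG}$'' does not apply to it. Second, and more seriously, any bound on the remainder routed through $\|u\|_{L^\infty(\cG)}$ or through the truncation height $\max_{\mathrm v}u(\mathrm v)$ cannot be made gradient-independent: a soliton-type bump of mass $\mu\le\mu_{\R}$ concentrated \emph{at} a vertex of degree $\ge 3$ has $\max_{\mathrm v}u(\mathrm v)=\|u\|_{L^\infty(\cG)}$ arbitrarily large while the mass carried by the genuinely single-preimage levels is exponentially small, so the constant $C_{\cG}$ would blow up along this family if the estimate passes through that height.

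The missing structural ingredient --- and the place where the hypothesis that $\cG$ has no terminal point is really used --- is a uniform lower bound on the measure of the bad superlevel sets. If $N(t)=1$, then $\{u>t\}$ is a bounded connected open set whose boundary is a single point $x_t$, hence it is a bounded connected component of $\cG\setminus\{x_t\}$ and must contain a vertex $\mathrm v$; since $\deg(\mathrm v)\ge 2$ and removing the single point $x_t$ can sever at most one of the edge-germs at $\mathrm v$, the set $\{u>t\}$ contains an entire bounded edge, so that $\meas(\{u>t\})\ge \ell_{\min}(\cG)>0$. Together with $\mu\le\mu_{\R}$ this confines the single-preimage levels to heights $t\le(\mu_{\R}/\ell_{\min}(\cG))^{1/2}$, a quantity depending only on $\cG$, and it is this cap on the height of the bad levels --- not a Sobolev inequality on the compact core --- that produces a remainder $C_{\cG}\,\theta_u^{1/2}$ with $C_{\cG}$ independent of $\|u'\|_{L^2(\cG)}$ and of $u$. (This is also where a terminal point destroys the lemma: a superlevel set clinging to a tip has one boundary point but arbitrarily small measure.) Without this input your bookkeeping cannot be closed, so as it stands the proposal identifies the right framework but omits the decisive idea; for the complete argument you should consult the proof of Lemma~4.4 in \cite{AST2}.
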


\subsection{Basic estimates on the ground state level}

Arguing as in \cite[Proposition 2.1]{AST3} or \cite[Proposition 2.3]{AST2}, it is not difficult to check that for any non-compact graph $\cG$ and any $q>2$
\beq\label{GN cost}
C_q(\R) \le C_q(\cG) \le C_q(\R^+) 
\eeq
(in this framework, $\R$ can be seen as a graph obtained by glueing together two copies of $\R^+$),  and that for every $\mu>0$ and $\alpha \in \R$
 \beq\label{dis gs}
 \cE_{\a}(\mu, \R^+) \le \cE_\alpha(\mu, \cG) \le \cE_\a(\mu, \R).
 \eeq
 In the particular case $q=6$, inequality \eqref{GN cost} yields
 \beq\label{crit mass}
 \frac{\mu_{\R}}{2} = \mu_{\R_+} \le \mu_\cG \le \mu_{\R}.
 \eeq
 
\begin{remark}\label{rmk: density}
The second inequality in \eqref{GN cost}, and the first one in \eqref{dis gs}, follow almost directly by considering decreasing rearrangements on $\R^+$. Instead, in order to prove the validity of the first inequality in \eqref{GN cost}, and of the second one in \eqref{dis gs}, one can use the following argument borrowed from \cite{AST2}, which will be also useful also in other parts of the paper. Since a non-compact graph $\cG$ contains at least one unbounded edge, it contains in turn arbitrarily large intervals. Therefore, any function $v \in H^1_{\mu,c}(\R):= \{u \in H^1_\mu(\R): \text{ $u$ has compact support}\}$ can be regarded as a function on $H^1_\mu(\cG)$, since we can place the support of $v$ on an half-line of $\cG$, and extend $v$ as $0$ on the rest of the graph. This argument will be summarized by saying that \emph{$H^1_\mu(\cG)$ contains $H^1_{\mu,c}(\R)$}, with some abuse of terminology. Now the first inequality in \eqref{GN cost}, and the second one in \eqref{dis gs}, simply follow from the fact that $H^1_{\mu,c}(\R)$ is dense in $H^1_\mu(\R)$ (see \cite[Proposition 2.3]{AST2} for more details).
\end{remark}

Estimates \eqref{GN cost}-\eqref{crit mass} make clear that the understanding of the cases $\cG = \R^+$ and $\cG =\R$ is essential. If $\alpha=0$, then 
\beq\label{gs 0 R}
\cE_0(\mu,\R) = \begin{cases} 0 & \text{if $\mu \in (0,\mu_{\R}]$} \\ -\infty & \text{if $\mu > \mu_{\R}$}, \end{cases} \qquad \cE_0(\mu,\R^+) = \begin{cases} 0 & \text{if $\mu \in (0,\mu_{\R^+}]$} \\ -\infty & \text{if $\mu > \mu_{\R^+}$}; \end{cases}
\eeq
moreover, $\cE_0(\mu,\R)$ (resp. $\cE_0(\mu,\R^+)$) is achieved if and only if $\mu = \mu_{\R}$ (resp. $\mu = \mu_{\R^+}$). Ground states for $\cE_0(\mu_{\R},\R)$, called solitons, form a two-parameters family $\phi_{\lambda,x_0}(x) = \sqrt{\lambda}\phi(\lambda(x-x_0))$, where $\lambda>0$, $x_0 \in \R$, and
\beq\label{soliton}
\phi(x) = \textrm{sech}^{1/2}\left( \frac{2x}{\sqrt{3}} \right).
\eeq
Ground states for $\cE_0(\mu_{\R^+},\R^+)$ are half-solitons, that is restrictions on $\R^+$ of even solitons.

The case $\alpha \neq 0$ and $\cG=\R$ is studied in \cite[Theorem 1.1]{So1}. With the above notations, the result reads as follows:

\begin{theorem}\label{thm: R}
Let $p \in (2,6)$. It results that:
\begin{itemize}
\item[($i$)] If $\alpha>0$, then 
\[
\cE_\alpha(\mu, \R) \in (-\infty,0) \quad \text{if $\mu \in (0, \mu_\R)$}, \quad \text{and}\quad \cE_\alpha(\mu, \R) =-\infty \quad \text{if $\mu  \ge \mu_\R$}.  
\]
Moreover, for $\mu \in (0, \mu_\R)$ the infimum is achieved by an even positive function, decreasing on $[0,+\infty)$.
\item[($ii$)] If $\alpha < 0$, then
\[
\cE_\alpha(\mu, \R) = 0 \quad \text{if $\mu \in (0, \mu_\R]$},  \quad \text{and}\quad \cE_\alpha(\mu, \R) =-\infty \quad \text{if $\mu >\mu_\R$}, 
\]
and the infimum is never achieved.
\end{itemize}
\end{theorem}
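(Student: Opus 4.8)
The plan is to treat the focusing case $\alpha>0$ (part $(i)$) and the defocusing case $\alpha<0$ (part $(ii)$) separately, but in both to rely on two basic tools. The first is the Gagliardo--Nirenberg inequality at the critical exponent, which on $\R$ reads $\|u\|_{L^6}^6\le (3/\mu_\R^2)\|u\|_{L^2}^4\|u'\|_{L^2}^2$ (recall $\mu_\R=\sqrt{3/C_6(\R)}$). The second is the family of mass-preserving dilations $u_t(x):=\sqrt{t}\,u(tx)$, under which
\[
E_\alpha(u_t,\R)=\frac{t^2}{2}\|u'\|_{L^2}^2-\frac{t^2}{6}\|u\|_{L^6}^6-\frac{\alpha}{p}\,t^{p/2-1}\|u\|_{L^p}^p .
\]
The three exponents $2,2,p/2-1$, together with the fact that $0<p/2-1<2$ for $p\in(2,6)$, drive the entire analysis: the kinetic and critical terms scale identically, while the subcritical term carries the strictly smaller power $t^{p/2-1}$.

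For part $(i)$ I would first show $\cE_\alpha(\mu,\R)>-\infty$ when $\mu<\mu_\R$: inserting the critical inequality gives $E_\alpha(u,\R)\ge\frac12\big(1-\mu^2/\mu_\R^2\big)\|u'\|_{L^2}^2-\frac{\alpha}{p}\|u\|_{L^p}^p$, and since the subcritical Gagliardo--Nirenberg bound only involves $\|u'\|_{L^2}^{(p-2)/2}$ with exponent $<2$, Young's inequality absorbs it into the strictly positive quadratic term up to an additive constant. Strict negativity $\cE_\alpha(\mu,\R)<0$ follows by letting $t\to0^+$ in the displayed identity, where the (negative) $t^{p/2-1}$ term dominates. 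The threshold $\cE_\alpha(\mu,\R)=-\infty$ for $\mu\ge\mu_\R$ is obtained from $E_\alpha\le E_0$ (so $\cE_\alpha\le\cE_0=-\infty$) when $\mu>\mu_\R$, and at $\mu=\mu_\R$ by dilating the soliton \eqref{soliton}: since $E_0(\phi_t,\R)\equiv0$, one has $E_\alpha(\phi_t,\R)=-\frac{\alpha}{p}t^{p/2-1}\|\phi\|_{L^p}^p\to-\infty$ as $t\to+\infty$.

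The heart of part $(i)$ is existence for $\mu\in(0,\mu_\R)$, and this is where I expect the main obstacle. I would take a minimizing sequence, replace each term by its symmetric decreasing rearrangement (which does not increase $E_\alpha$ by Pólya--Szegő, the critical and subcritical $L^q$ norms being preserved), and use the coercivity estimate above to bound it in $H^1(\R)$. For a weak limit $u$, the pointwise bound $|u_n(x)|\le\|u_n\|_{L^2}/\sqrt{2|x|}$ valid for symmetric decreasing functions yields uniformly small tails in every $L^q$ with $q>2$, which together with local compactness gives \emph{strong} convergence in $L^p(\R)$ and $L^6(\R)$. Vanishing is excluded because $u\equiv0$ would force $E_\alpha(u_n,\R)\ge o(1)$, contradicting $\cE_\alpha(\mu,\R)<0$. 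To rule out mass loss I would prove that $m\mapsto\cE_\alpha(m,\R)$ is strictly decreasing on $(0,\mu_\R)$: applying the pure spatial dilation $v(\cdot/\sigma)$ with $\sigma=\mu/m>1$ gives $E_\alpha(v(\cdot/\sigma),\R)\le\sigma E_\alpha(v,\R)$, hence $\cE_\alpha(\mu,\R)\le\sigma\,\cE_\alpha(m,\R)<\cE_\alpha(m,\R)$ using $\cE_\alpha(m,\R)<0$. Since the weak limit has mass $m=\|u\|_{L^2}^2\le\mu$ and, by lower semicontinuity together with strong convergence of the nonlinear terms, satisfies $\cE_\alpha(m,\R)\le E_\alpha(u,\R)\le\cE_\alpha(\mu,\R)$, strict monotonicity forces $m=\mu$; then $u$ is a minimizer, the convergence is strong in $H^1$, and $u>0$ follows from the Euler--Lagrange equation and the strong maximum principle (a nonnegative, even, nonincreasing solution cannot vanish without being identically zero).

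For part $(ii)$ the reversed sign of the subcritical term changes everything. For $\mu\le\mu_\R$ the critical inequality gives $\frac12\|u'\|_{L^2}^2-\frac16\|u\|_{L^6}^6\ge\frac12\big(1-\mu^2/\mu_\R^2\big)\|u'\|_{L^2}^2\ge0$, so $E_\alpha(u,\R)\ge\frac{|\alpha|}{p}\|u\|_{L^p}^p>0$ for every $u\neq0$; hence $\cE_\alpha(\mu,\R)\ge0$ and the infimum cannot be attained. The matching upper bound $\cE_\alpha(\mu,\R)\le0$ comes from spreading dilations ($t\to0^+$), for which both $t^2E_0(u,\R)$ and $\frac{|\alpha|}{p}t^{p/2-1}\|u\|_{L^p}^p$ tend to $0$. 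Finally, for $\mu>\mu_\R$ I would fix $v\in H^1_\mu(\R)$ with $E_0(v,\R)<0$ (available since $\cE_0(\mu,\R)=-\infty$ by \eqref{gs 0 R}) and let $t\to+\infty$: the $t^2E_0(v,\R)$ term diverges to $-\infty$ faster than the $t^{p/2-1}$ correction grows, giving $\cE_\alpha(\mu,\R)=-\infty$.
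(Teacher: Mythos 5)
Your proposal is correct, but be aware that the paper does not actually prove Theorem \ref{thm: R}: it is quoted directly from \cite[Theorem 1.1]{So1}, where the result is established in a more general framework (arbitrary dimension, a wider range of exponents, and a broader notion of ground state). Your blind argument is therefore a self-contained, elementary one-dimensional substitute rather than a parallel of anything in this paper, and as such it holds up: the coercivity estimate via the critical Gagliardo--Nirenberg inequality plus Young, the scaling identity $E_\alpha(u_t,\R)=t^2E_0(u,\R)-\tfrac{\alpha}{p}t^{p/2-1}\|u\|^p_{L^p(\R)}$ with $0<p/2-1<2$, the soliton dilation at $\mu=\mu_\R$, and the sign analysis in the defocusing case are all sound. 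The existence step for $\alpha>0$ is also essentially complete: rearrangement gives a symmetric decreasing minimizing sequence with the uniform tail bound $|u_n(x)|\le\sqrt{\mu/(2|x|)}$, hence strong $L^p\cap L^6$ convergence of a subsequence; vanishing is excluded by $\cE_\alpha(\mu,\R)<0$; and your strict monotonicity $\cE_\alpha(\mu,\R)\le\sigma\,\cE_\alpha(m,\R)<\cE_\alpha(m,\R)$ (with $\sigma=\mu/m>1$, using $\cE_\alpha(m,\R)<0$) rules out mass loss. Two cosmetic points if you write it up: pass to $|u_n|$ before rearranging (this leaves the energy unchanged), and state the inequality $\cE_\alpha(m,\R)\le E_\alpha(u,\R)\le\cE_\alpha(\mu,\R)$ only after the strong $L^6$ and $L^p$ convergences are in place, since it combines them with weak lower semicontinuity of the kinetic term. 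Neither is a gap. It is worth noting that your monotonicity-by-dilation device is precisely the mechanism the paper deploys later for general graphs (Lemma \ref{lem: sub}), where the spatial dilation $v(\cdot/\sigma)$ is unavailable and is replaced by the amplitude scaling $\theta^{1/2}u$; on $\R$ both work, and your choice is the more classical one.
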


\begin{remark}
It is not difficult to prove that any ground state $\phi_{\mu, \alpha}$ for $\cE_\alpha(\mu, \cG)$ with $\alpha >0$ and $\mu \in (0,\mu_{\R})$ must be a positive (or negative) function, even with respect to a point $x_0$, and decreasing (or increasing) in $(x_0,+\infty)$. This follows from the maximum principle (using the fact that any ground state solves the NLS equation) and a simple rearrangement argument (for which one can use e.g. \cite[Proposition 3.1]{AST1}).  

Moreover, proceeding as in \cite{So1}, one can prove a result completely analogue to Theorem \ref{thm: R} for $\cG=\R^+$, with $\mu_\R$ replaced by $\mu_{\R^+} = \mu_{\R}/2$. %Ground states for $\cE_0(\mu_{\R^+},\R^+)$ are \emph{half-solitons}, that is restrictions on $\R^+$ on the even solitons $\phi_{\lambda,0}$.

For future convenience, we compare the ground state levels obtained on $\R$ and on $\R^+$. For $\mu \in (0,\mu_{\R^+})$ and $\alpha>0$, by a rearrangement argument
\[
\cE_\alpha(2\mu,\R) = \inf\left\{E_\alpha(u,\R): \ \text{$u \in H^1_{2\mu}(\R)$ is even}\right\}.
\]
But any even function in $H^1_{2\mu}(\R)$ can be obviously identified with an element of $H^1_\mu(\R^+)$, and the identification is $1-1$. Therefore
\begin{equation}\label{gs R e R+}
\cE_\alpha(2\mu,\R) = 2\cE_\alpha(\mu,\R^+),
\eeq
and $\phi_{2\mu,\alpha}$ is an even ground state for $\cE_\a(2\mu,\R)$ if and only if $\phi_{2\mu,\alpha}|_{\R^+}$ is a even ground state for $\cE_\alpha(\mu, \R^+)$.
%In particular, for $\cG=\R^+$ and $\cG=\R$ ground states never exist in the defocusing case $\alpha<0$, and exist in the focusing case $\alpha > 0$ only for masses smaller than the critical mass. 
\end{remark}

Let us now come back to general graphs $\cG$. By the Gagliardo-Nirenberg inequality and the definition of $\mu_{\cG}$, for every $u \in H^1_\mu(\cG)$
\begin{equation}\label{E da sotto}
E_\alpha(u, \cG) \ge \frac12\left(1-\left(\frac{\mu}{\mu_{\cG}}\right)^2\right) \lp{u'}{2}^2 - \frac{\alpha}{p} C_p(\cG) \mu^\frac{p+2}{4} \lp{u'}{2}^{\frac{p-2}2}.
\eeq
Since $p<6$, it follows that $\cE_\alpha(\mu, \cG)$ is bounded from below for every $\mu \in (0,\mu_{\cG})$, for every $\alpha \in \R$. 

Collecting together what we recalled so far, we obtain a preliminary result.

%If we assume that $\alpha  \le 0$, we can easily say something more.

\begin{proposition}\label{prop: basic}
Let $\cG$ be a non-compact graph, and let $p \in (2,6)$.
\begin{itemize}
\item[($i$)] If $\alpha>0$, then 
\[
\cE_\alpha(\mu, \cG) \in (-\infty,0) \quad \text{if $\mu \in (0, \mu_\cG)$}, \quad \text{and}\quad \cE_\alpha(\mu, \cG) =-\infty \quad \text{if $\mu  \ge \mu_\R$}.  
\]
Moreover, for $\mu \in [\mu_{\cG}, \mu_{\R})$ we have that $\cE_\alpha(\mu, \cG) <0$ (possibly $-\infty$).
\item[($ii$)] If $\alpha < 0$, then 
\[
\cE_\alpha(\mu, \cG) = 0 \quad \text{if $\mu \in (0, \mu_\cG]$}, \quad \text{and}\quad \cE_\alpha(\mu, \cG) =-\infty \quad \text{if $\mu > \mu_\R$}
\]
Moreover, the infimum is never achieved when $\mu \in (0,\mu_{\cG}]$ and $\alpha<0$.
\end{itemize}
\end{proposition}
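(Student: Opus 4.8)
The plan is to assemble the whole statement from three ingredients already established in the preliminaries: the Gagliardo--Nirenberg lower bound \eqref{E da sotto}, the comparison chain \eqref{dis gs} (valid for every $\alpha\in\R$), and the sharp description of the problem on the line in Theorem \ref{thm: R}. Almost every assertion reduces to a combination of these, so most of the work is bookkeeping; the only genuinely self-contained argument is the non-achievement in part (ii), which I regard as the main obstacle.

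For part (i) ($\alpha>0$) I would first prove boundedness from below on $(0,\mu_\cG)$. Setting $t=\lp{u'}{2}$, the right-hand side of \eqref{E da sotto} is $\frac12\bigl(1-(\mu/\mu_\cG)^2\bigr)t^2-\frac{\alpha}{p}C_p(\cG)\mu^{(p+2)/4}t^{(p-2)/2}$; since $\mu<\mu_\cG$ the quadratic coefficient is strictly positive, and since $p<6$ we have $(p-2)/2<2$, so this function of $t$ is continuous on $[0,+\infty)$ and tends to $+\infty$, hence attains a finite minimum. This gives $\cE_\a(\mu,\cG)>-\infty$. For the sign and the $-\infty$ regime I would invoke $\cE_\a(\mu,\cG)\le\cE_\a(\mu,\R)$ from \eqref{dis gs}: together with Theorem \ref{thm: R}(i) and $\mu_\cG\le\mu_\R$ (from \eqref{crit mass}), this yields $\cE_\a(\mu,\cG)<0$ for every $\mu\in(0,\mu_\R)$ --- covering both $\mu\in(0,\mu_\cG)$ and $\mu\in[\mu_\cG,\mu_\R)$ --- and $\cE_\a(\mu,\cG)=-\infty$ for $\mu\ge\mu_\R$.

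For part (ii) ($\alpha<0$) the upper bound $\cE_\a(\mu,\cG)\le0$ on $(0,\mu_\cG]$ and the equality $\cE_\a(\mu,\cG)=-\infty$ for $\mu>\mu_\R$ again follow from \eqref{dis gs} and Theorem \ref{thm: R}(ii) (using $\mu_\cG\le\mu_\R$). For the matching lower bound I would \emph{not} use \eqref{E da sotto}, which is tailored to $\alpha>0$; instead I would observe that for $\alpha<0$ the term $-\frac{\alpha}{p}\lp{u}{p}^p$ is nonnegative, whence $E_\a(u,\cG)\ge E_0(u,\cG)$, and the Gagliardo--Nirenberg inequality with $q=6$ together with $\mu_\cG^2=3/C_6(\cG)$ gives $E_0(u,\cG)\ge\frac12\bigl(1-(\mu/\mu_\cG)^2\bigr)\lp{u'}{2}^2\ge0$ for every $u\in H^1_\mu(\cG)$ with $\mu\le\mu_\cG$. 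Combining the two bounds gives $\cE_\a(\mu,\cG)=0$ on $(0,\mu_\cG]$.

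Finally, the non-achievement for $\alpha<0$ and $\mu\in(0,\mu_\cG]$ is the one point requiring a dedicated argument, since it exploits rigidity rather than a comparison of levels. Suppose a minimizer $u$ existed, so $E_\a(u,\cG)=0$ with $\lp{u}{2}^2=\mu>0$. Writing $E_\a(u,\cG)=E_0(u,\cG)-\frac{\alpha}{p}\lp{u}{p}^p$ as a sum of two nonnegative quantities (by the lower bound just derived and $\alpha<0$), the vanishing of the sum forces $-\frac{\alpha}{p}\lp{u}{p}^p=0$, hence $u\equiv0$, contradicting $\mu>0$. The only care needed is that the inequality $E_0(u,\cG)\ge0$ persists at the endpoint $\mu=\mu_\cG$, where the quadratic coefficient degenerates to zero but the bound still holds.
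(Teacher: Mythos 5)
Your proposal is correct and takes essentially the same route as the paper: \eqref{E da sotto} for boundedness from below, the comparison \eqref{dis gs} with Theorem \ref{thm: R} for the sign and the $-\infty$ regimes, and strict positivity of the energy of every nonzero $u$ for the non-attainment in part ($ii$). Your one deviation --- replacing \eqref{E da sotto} by $E_\a(u,\cG)\ge E_0(u,\cG)+\frac{|\alpha|}{p}\lp{u}{p}^p$ when $\alpha<0$ --- is in fact the more careful reading, since applying Gagliardo--Nirenberg to the $p$-term only yields a lower bound when $\alpha\ge0$; the paper nominally cites \eqref{E da sotto} at that point but the intended estimate is exactly the one you wrote.
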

\begin{proof}
Point ($i$) follows directly from \eqref{dis gs}, Theorem \ref{thm: R} and estimate \eqref{E da sotto}. 
%The fact that $\cE_\alpha(\mu, \cG) <0$ for $\mu \in [\mu_{\cG}, \mu_{\R})$ can be checked by considering the elements of a maximizing sequence in $H^1_\mu(\cG)$ for $C_6(\cG) = 3/\mu_{\cG}^2$ as test functions of $E_\a(\cdot\,,\cG)$.

Concerning point ($ii$), we observe that for any $\alpha<0$
\[
\cE_\a(\mu,\cG) \le \cE_\a(\mu,\R) = \begin{cases} 0 & \text{if $\mu \in (0,\mu_{\R}]$} \\ -\infty & \text{if $\mu > \mu_{\R}$} \end{cases}
\]
by \eqref{dis gs} and Theorem \ref{thm: R}. Moreover, if $\mu \in (0,\mu_{\cG}]$ estimate \eqref{E da sotto} gives
\[
E_\alpha(u, \cG) > 0 \qquad \forall u \in H^1_\mu(\cG), \quad \forall \mu \in (0,\mu_{\cG}],
\]
so that $\cE_\a(\mu,\cG) \ge 0$ for any such $\mu$. We infer that $\cE_\a(\mu, \cG) = 0$ for $\mu \in (0,\mu_{\cG}]$, and the infimum is never achieved.
%
%
%On the other hand
%\[
%0 \le \cE_\a(\mu,\cG) \le \cE_\a(\mu,\R) = 0 \qquad \forall \mu \in (0,\mu_{\cG}],
%\]
%where we used \eqref{dis gs} and Theorem \ref{thm: R}. Thus, $\cE_\a(\mu, \cG) = 0$ for $\mu \in (0,\mu_{\cG}]$, and the infimum cannot be achieved. If instead $\mu >\mu_{\R}$, then $\cE_\alpha(\mu, \cG) = -\infty$ follows from \eqref{dis gs} and Theorem \ref{thm: R}.
%
%Suppose now that $\cE_\alpha(\mu,\cG)$ is achieved by $u \in H^1_\mu(\cG)$, for some $\alpha<0$ and $\mu \in (0,\mu_{\cG}]$. Then 
%\[
%0 = \cE_0(\mu,\cG)  \le E_0(u, \cG) < E_\a(u, \cG) = \cE_\a(\mu,\cG) = 0,
%\]
%a contradiction.
\end{proof}

This means that the interesting cases are:
\begin{itemize}
\item[($i$)] $\alpha>0$ and $\mu \in (0, \mu_{\R})$;
\item[($ii$)] $\alpha<0$ and $\mu \in (\mu_{\cG}, \mu_{\R}]$
\end{itemize}
We shall see that estimates on $\cE_\a(\mu, \cG)$ and the existence of ground states in these ranges depend strongly on the properties of $\cG$. %For instance, if $\mu_{\cG} = \mu_{\R}$ there are never ground states for $\alpha<0$. 
In this perspective, it is convenient to review the results proved in \cite{AST2} for the homogeneous case $\alpha = 0$.

\section{Proof of the main results  in the focusing case $\alpha>0$}\label{sec: foc}

Throughout this section we consider the case when $\alpha>0$. 

\subsection{Proof of Theorem \ref{thm: main foc}.} The proof requires some preliminary statements. At first, we characterize the ground state energy level.

\begin{lemma}\label{lem: level}
%Let $\cG$ be a non-compact metric graph, $p \in (2,6)$, $\alpha>0$, and let $\tilde \mu_{\cG}$ be defined by \eqref{new crit}. 
We have that 
\[
\cE_\alpha(\mu, \cG)  \in (-\infty,0) \quad \text{if $\mu \in (0,\tilde \mu_{\cG})$}, \quad \text{and} \quad \cE_\alpha(\mu, \cG)  = -\infty \quad \text{if $\mu \ge \tilde \mu_{\cG}$}.
\]
\end{lemma}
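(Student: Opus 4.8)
The plan is to separate the two cases that define $\tilde \mu_{\cG}$ in \eqref{new crit}, namely whether $\cG$ has a terminal point, and to build on what is already available: the comparison $\cE_\alpha(\mu,\cG)\le \cE_\alpha(\mu,\R)$ from \eqref{dis gs}, the sharp description of $\cE_\alpha(\cdot,\R)$ in Theorem \ref{thm: R}, and Proposition \ref{prop: basic}(i), which already settles $\mu\in(0,\mu_\cG)$ and $\mu\ge\mu_\R$. Recalling from the classification in \cite{AST2} that a terminal point forces $\mu_\cG=\mu_{\R^+}=\tilde\mu_\cG$, while in the absence of terminal points $\tilde\mu_\cG=\mu_\R\ge\mu_\cG$, the only genuinely new work is: (a) when $\cG$ has no terminal point, to extend the lower bound $\cE_\alpha(\mu,\cG)>-\infty$ from $(0,\mu_\cG)$ up to all of $(0,\mu_\R)$; and (b) when $\cG$ has a terminal point, to show $\cE_\alpha(\mu,\cG)=-\infty$ already for $\mu\ge\mu_{\R^+}$, since Proposition \ref{prop: basic}(i) gives this only for $\mu\ge\mu_\R$.

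For the first assertion, $\cE_\alpha(\mu,\cG)\in(-\infty,0)$ on $(0,\tilde\mu_\cG)$: negativity is immediate, since $(0,\tilde\mu_\cG)\subseteq(0,\mu_\R)$ and $\cE_\alpha(\mu,\cG)\le\cE_\alpha(\mu,\R)<0$ there by Theorem \ref{thm: R}(i). For boundedness from below, the terminal-point case is contained in Proposition \ref{prop: basic}(i) (as $\tilde\mu_\cG=\mu_\cG$). If instead $\cG$ has no terminal point, I will apply the modified Gagliardo-Nirenberg inequality of Lemma \ref{lem: mod GN}: for $u\in H^1_\mu(\cG)$ with $\mu\in(0,\mu_\R)$ it produces $\theta_u\in[0,\mu]$ with $\tfrac16\|u\|_{L^6(\cG)}^6\le\tfrac12\big(\tfrac{\mu-\theta_u}{\mu_\R}\big)^2\|u'\|_{L^2(\cG)}^2+\tfrac{C_\cG}{6}\theta_u^{1/2}$, whence, using also the ordinary Gagliardo-Nirenberg inequality on the $L^p$ term,
\[
E_\alpha(u,\cG)\ge \frac12\left(1-\Big(\tfrac{\mu}{\mu_\R}\Big)^2\right)\|u'\|_{L^2(\cG)}^2-\frac{C_\cG}{6}\mu^{1/2}-\frac{\alpha}{p}C_p(\cG)\,\mu^{\frac{p+2}{4}}\|u'\|_{L^2(\cG)}^{\frac{p-2}{2}}.
\]
Since $\mu<\mu_\R$ the quadratic coefficient is strictly positive and $\tfrac{p-2}{2}<2$, so the right-hand side is bounded below uniformly in $u$, giving $\cE_\alpha(\mu,\cG)>-\infty$.

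For the second assertion, $\cE_\alpha(\mu,\cG)=-\infty$ on $\mu\ge\tilde\mu_\cG$: in the no-terminal-point case $\tilde\mu_\cG=\mu_\R$ and it suffices to use $\cE_\alpha(\mu,\cG)\le\cE_\alpha(\mu,\R)=-\infty$. In the terminal-point case it remains to treat $\mu\in[\mu_{\R^+},\mu_\R)$, which I will do by concentrating test functions near the tip: the terminal edge contains arbitrarily short intervals on which a rescaled profile can be supported and then extended by zero to an element of $H^1_\mu(\cG)$ (continuity at the internal vertex is automatic, as the profile vanishes there). For a profile $\psi$ supported near the tip with $\|\psi\|_{L^2}^2=\mu$, the mass-preserving dilations $\psi_\lambda(x)=\sqrt{\lambda}\,\psi(\lambda x)$ satisfy
\[
E_\alpha(\psi_\lambda,\cG)=\frac{\lambda^2}{6}\,A(\psi)-\frac{\alpha}{p}\,\lambda^{\frac{p-2}{2}}\|\psi\|_{L^p}^p,\qquad A(\psi):=3\|\psi'\|_{L^2}^2-\|\psi\|_{L^6}^6.
\]
Taking $\psi$ to be a truncated and $L^2$-renormalized half-soliton, $A(\psi)$ tends, as the truncation is removed, to $K\tfrac{\mu}{\mu_{\R^+}}\big(1-(\tfrac{\mu}{\mu_{\R^+}})^2\big)$, where $K=\|v\|_{L^6(\R^+)}^6>0$ is the value for the exact half-soliton $v$. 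For $\mu>\mu_{\R^+}$ this limit is negative, so $A(\psi)<0$ for a suitable truncation, and letting $\lambda\to+\infty$ forces $E_\alpha(\psi_\lambda,\cG)\to-\infty$.

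The borderline $\mu=\tilde\mu_\cG=\mu_{\R^+}$ on a graph with a finite terminal edge is the point I expect to be the main obstacle. By the sharp Gagliardo-Nirenberg inequality on $\R^+$, $A(\psi)=0$ forces $\psi$ to be an exact half-soliton, which has non-compact support and cannot be placed on a finite terminal edge; hence every admissible compactly supported $\psi$ has $A(\psi)>0$ and a single dilation no longer works. I will instead run a double limit: choosing truncated renormalized half-solitons $\psi_R$ (of mass $\mu_{\R^+}$, supported in $[0,R]$), one has $A(\psi_R)\to0^+$ while $\|\psi_R\|_{L^p}^p\to\|v\|_{L^p(\R^+)}^p>0$, and minimizing the displayed energy over $\lambda$ yields (with $\beta:=\tfrac{p-2}{2}\in(0,2)$) a value of order $-A(\psi_R)^{-\beta/(2-\beta)}\to-\infty$. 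The care needed is to verify that the optimal dilation grows fast enough, relative to $R$, that the rescaled support still fits inside the terminal edge; the exponential decay of the half-soliton makes $A(\psi_R)$ exponentially small in $R$, which secures this. Finally, the degenerate subcase $\cG=\R^+$ (the terminal edge being a half-line) is immediate, since the exact half-soliton is admissible, has $A=0$, and a single dilation already gives $-\infty$; equivalently it follows from the $\R^+$-analogue of Theorem \ref{thm: R} recalled above.
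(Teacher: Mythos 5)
Your proposal is correct and follows essentially the same route as the paper: the modified Gagliardo--Nirenberg inequality (Lemma \ref{lem: mod GN}) for the lower bound on graphs without terminal points, the comparison \eqref{dis gs} with Theorem \ref{thm: R} for negativity and for $\mu\ge\mu_{\R}$, and concentration of truncated, $L^2$-renormalized half-solitons on the terminal edge for $\mu\ge\mu_{\R^+}$ when a tip is present. The only cosmetic differences are that for $\mu\in(\mu_{\R^+},\mu_{\R})$ the paper simply invokes $E_\alpha\le E_0$ together with $\cE_0(\mu,\cG)=-\infty$ from \cite[Theorem 3.1]{AST2} rather than re-deriving it, and at the borderline mass $\mu=\mu_{\R^+}$ the paper couples the truncation scale to the dilation parameter (a single family $u_\lambda$ supported on $[0,\ell]$) and exhibits the dominant term $-\tfrac{\alpha}{p}\lambda^{(p-2)/2}\int\phi^p$ directly, whereas you decouple the two parameters and optimize in $\lambda$ --- both arguments rest on the same exponential-decay estimates and are equally valid.
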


\begin{proof}
By Proposition \ref{prop: basic}, we know that $\cE_\alpha(\mu, \cG) \in (-\infty,0)$ if $\mu \in (0, \mu_{\R^+})$, and $\cE_\a(\mu, \cG) = -\infty$ if $\mu \ge \mu_{\R}$, for any non-compact $\cG$ and $\alpha>0$. 

Let $\cG$ be a graph without a terminal point, so that $\tilde \mu_{\cG} = \mu_{\R}$, and let $\mu < \mu_{\R}$. By Lemma \ref{lem: mod GN}, there exists $C>0$ (independent of $u$) such that
\[
\begin{split}
E_\a(u,\cG) 
%&\ge \frac12\left(1-\left(\frac{\mu-\theta}{\mu_{\R}}\right)^2\right)\lp{u'}{2}^2- \frac{C}{6}\theta^\frac12- \frac{\alpha C_p(\cG)}{p} \mu^\frac{p+2}{4} \lp{u'}{2}^\frac{p-2}{4} \\
 \ge \frac12\left(1-\left(\frac{\mu}{\mu_{\R}}\right)^2\right)\lp{u'}{2}^2 - \frac{C_{\cG}}{6}\mu^\frac12 - \frac{\alpha C_p(\cG)}{p} \mu^\frac{p+2}{4} \lp{u'}{2}^\frac{p-2}{2},
\end{split}
\]
for every $u \in H^1_\mu(\cG)$. Since $p<6$, it follows plainly that $\cE_\a(\mu, \cG)>-\infty$. Moreover, by \eqref{dis gs} and Theorem \ref{thm: R} we also have that $\cE_\a(\mu, \cG)<0$. This completes the proof for graphs without terminal points. 

Let now $\cG$ be a graph with a terminal point, so that $\tilde \mu_{\cG} = \mu_{\R^+}$. In this case, we have to show that $\cE_\a(\mu,\cG) = -\infty$ for $\mu \in [\mu_{\R^+},\mu_{\R})$. If $\mu \in (\mu_{\R^+},\mu_{\R})$, it is sufficient to observe that, since $\alpha>0$, it results that $E_\alpha(u, \cG) \le E_0(u, \cG)$ for every $u \in H^1_\mu(\cG)$, and hence $\cE_\a(\mu,\cG) \le \cE_0(\mu,\cG) =-\infty$, where the last equality is proved in \cite[Theorem 3.1]{AST2}. We focus now on $\mu = \mu_{\R^+}$. We denote by $\phi$ the restriction on $\R^+$ of the standard soliton defined in \eqref{soliton}, and by $\phi_\lambda(x) = \sqrt{\lambda} \phi(\lambda x)$. Let $\textrm{e}$ be a terminal edge of $\cG$, i.e. an edge with a terminal point. We identify $\textrm{e}$ with $[0,\ell]$, with the coordinate $0$ placed at the terminal point, and define $u_\lambda \in H^1_{\mu_{\R^+}}(\cG)$ by
\[
u_\lambda(x):= \begin{cases}
\frac{\mu_{\R^+}^{1/2}}{\| \phi_\lambda-\phi_\lambda(\ell)\|_{L^2(0,\ell)}} (\phi_\lambda(x)-\phi_\lambda(\ell)) & \text{if $x \in \textrm{e}$} \\
0 & \text{if $x \in \cG \setminus \textrm{e}$.} 
\end{cases}
\]
It is easy to check that $\mu_{\R^+} > \| \phi_\lambda-\phi_\lambda(\ell)\|^2_{L^2(0,\ell)} \to \mu_{\R^+}$ as $\lambda \to +\infty$. Therefore, if $1+\delta_\lambda>1$ denotes the ratio $\mu_{\R^+}^{1/2}/\|\phi_\lambda-\phi_\lambda(\ell)\|_{L^2(0,\ell)}$, we have that
\beq\label{04061}
\begin{split}
E_\a(u_\lambda,\cG) &= E_\a(u_\lambda,\textrm{e}) \\
&= \frac{(1+\delta_\lambda)^2}{2} \int_0^\ell |\phi_\lambda'|^2 - \frac{(1+\delta_\lambda)^6}{6} \int_0^\ell (\phi_\lambda-\phi_\lambda(\ell))^6 - \alpha\frac{(1+\delta_\lambda)^p}{p} \int_0^\ell (\phi_\lambda-\phi_\lambda(\ell))^p \\
& \le (1+\delta_\lambda)^2\left[\frac{1}{2} \int_0^\ell |\phi_\lambda'|^2 - \frac{1}{6} \int_0^\ell (\phi_\lambda-\phi_\lambda(\ell))^6 - \frac{\alpha}{p} \int_0^\ell (\phi_\lambda-\phi_\lambda(\ell))^p \right] \\
& = (1+\delta_\lambda)^2\left[ \lambda^2 E_0(\phi,(0,\lambda \ell)) - \frac{\alpha \lambda^{\frac{p-2}2}}{p} \int_0^{\lambda \ell} \phi^p  \right. \\
& \hphantom{= (1+\delta_\lambda)^2\bigg[ } \quad +\left. \frac{\lambda^2}6 \int_0^{\lambda \ell} \left[\phi^6 - \left(\phi-\phi(\lambda\ell)\right)^6\right] + \frac{\alpha \lambda^{\frac{p-2}{2}}}{p} \int_0^{\lambda \ell} \Big[ \phi^p - \left(\phi-\phi(\lambda\ell)\right)^p\Big]\right]
\end{split}
\eeq
Now we estimate separately each term on the right hand side. Firstly, recalling that the half-soliton has null energy on $\R^+$, we observe that
\[
E_0(\phi,(0,\lambda \ell)) = E_0(\phi,\R^+) - E_0(\phi,(\lambda \ell, +\infty)) = - E_0(\phi(\cdot + \lambda \ell),\R^+) <0,
\]
since the mass of the translated half-soliton is smaller than $\mu_{\R^+}$. Secondly, we have that for any $q  > 2$
\[
0 \le \int_0^{\lambda \ell} \Big[ \phi^q - \left(\phi-\phi(\lambda\ell)\right)^q\Big] \le q \phi(\lambda \ell) \int_0^{\lambda \ell} \phi^{q-1} \le C \phi(\lambda \ell),
\]
where we used the integrability of $\phi^{q-1}$ on $[0,+\infty)$, which is a direct consequence of the exponential decay of $\phi$. This also implies that $\lambda^q \phi(\lambda\ell) \le C \lambda^q e^{-C \lambda \ell} \to 0$ as $\lambda \to +\infty$. Therefore, coming back to \eqref{04061}, we have that for $\lambda$ sufficiently large
\[
\begin{split}
E_\a(u_\lambda,\cG) &\le (1+\delta_\lambda)^2 \left[ - \frac{\alpha \lambda^{\frac{p-2}2}}{p} \int_0^{\lambda \ell} \phi^p + C_1 \lambda^2 \phi(\lambda \ell) + C_2 \alpha \lambda^{\frac{p-2}2}\phi(\lambda \ell) \right] \\
& \le -  \frac{\alpha \lambda^{\frac{p-2}2}}{2p} \int_0^{+\infty} \phi^p + C \lambda^2 e^{-C \lambda \ell} \to -\infty
\end{split}
\]
as $\lambda \to +\infty$.
\end{proof}

\begin{lemma}\label{lem: cont}
The map $\cE_\a(\cdot, \cG): [0,\tilde{\mu}_{\cG}) \to \R$, extended as $0$ in $\mu=0$, is continuous.
%\[
%\mu \in [0, \tilde \mu_{\cG}) \mapsto \begin{cases} \cE_\a(\mu,\cG) & \text{if $\mu \in (0,\tilde \mu_{\cG})$}  \\ 0 & \text{if $\mu=0$} \end{cases}
%\]
%is continuous.
\end{lemma}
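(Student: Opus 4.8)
The plan is to establish, for every $\mu_0 \in [0,\tilde\mu_\cG)$, the two one-sided bounds $\limsup_{\mu\to\mu_0}\cE_\a(\mu,\cG)\le\cE_\a(\mu_0,\cG)$ and $\liminf_{\mu\to\mu_0}\cE_\a(\mu,\cG)\ge\cE_\a(\mu_0,\cG)$, handling the interior points $\mu_0\in(0,\tilde\mu_\cG)$ by a mass-rescaling argument and the endpoint $\mu_0=0$ separately. The basic tool is that if $u\in H^1_\mu(\cG)$ and $\sigma>0$, then $\sqrt\sigma\,u\in H^1_{\sigma\mu}(\cG)$, with
\[
E_\a(\sqrt\sigma\,u,\cG)=\frac{\sigma}{2}\lp{u'}{2}^2-\frac{\sigma^3}{6}\lp{u}{6}^6-\frac{\alpha\sigma^{p/2}}{p}\lp{u}{p}^p,
\]
so that $E_\a(\sqrt\sigma\,u,\cG)\to E_\a(u,\cG)$ as $\sigma\to1$, the rate being controlled by $\lp{u'}{2}^2$, $\lp{u}{6}^6$, $\lp{u}{p}^p$.

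For upper semicontinuity at an interior $\mu_0$, I would fix $\eps>0$, pick $u\in H^1_{\mu_0}(\cG)$ with $E_\a(u,\cG)\le\cE_\a(\mu_0,\cG)+\eps$, and test $\cE_\a(\mu,\cG)$ with the competitor $\sqrt{\mu/\mu_0}\,u\in H^1_\mu(\cG)$; letting $\mu\to\mu_0$ and using the scaling identity gives $\limsup_{\mu\to\mu_0}\cE_\a(\mu,\cG)\le\cE_\a(\mu_0,\cG)+\eps$, hence the bound since $\eps$ is arbitrary. For lower semicontinuity I would take $\mu_n\to\mu_0$ and almost-minimizers $u_n\in H^1_{\mu_n}(\cG)$ with $E_\a(u_n,\cG)\le\cE_\a(\mu_n,\cG)+1/n$; with $\sigma_n=\mu_0/\mu_n\to1$ the functions $\sqrt{\sigma_n}\,u_n\in H^1_{\mu_0}(\cG)$ satisfy $\cE_\a(\mu_0,\cG)\le E_\a(\sqrt{\sigma_n}\,u_n,\cG)=E_\a(u_n,\cG)+o(1)$, \emph{provided} the sequence $(u_n)$ is bounded in $H^1$. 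Securing this uniform bound is the only genuine obstacle, and is where coercivity must be used.

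The bound is obtained as follows. Since $E_\a(u_n,\cG)\le\cE_\a(\mu_n,\cG)+1/n\le1$ is bounded above (recall $\cE_\a(\mu_n,\cG)<0$ from Lemma \ref{lem: level}), it suffices to have a coercive lower bound that is uniform for $\mu_n$ in a compact subinterval of $(0,\tilde\mu_\cG)$. When $\cG$ has a terminal point one has $\tilde\mu_\cG=\mu_{\R^+}=\mu_\cG$, so $\mu_n<\mu_\cG$ eventually and estimate \eqref{E da sotto} applies directly, its right-hand side tending to $+\infty$ as $\lp{u'}{2}\to\infty$ because $p<6$. When $\cG$ has no terminal point one has $\tilde\mu_\cG=\mu_\R\ge\mu_\cG$, so \eqref{E da sotto} fails for $\mu\ge\mu_\cG$; here I would instead invoke the modified Gagliardo–Nirenberg inequality of Lemma \ref{lem: mod GN}, exactly as in the proof of Lemma \ref{lem: level}, to get, for $\mu<\mu_\R$,
\[
E_\a(u,\cG)\ge\frac12\left(1-\left(\frac{\mu}{\mu_\R}\right)^2\right)\lp{u'}{2}^2-\frac{C_\cG}{6}\mu^{\frac12}-\frac{\alpha C_p(\cG)}{p}\mu^{\frac{p+2}{4}}\lp{u'}{2}^{\frac{p-2}{2}}.
\]
In both cases the coefficient of $\lp{u'}{2}^2$ stays bounded away from $0$ for $\mu_n$ in a compact subinterval of $(0,\tilde\mu_\cG)$, whence $\sup_n\lp{u_n'}{2}<\infty$; Gagliardo–Nirenberg then bounds $\lp{u_n}{6}$ and $\lp{u_n}{p}$, and the scaling identity forces $E_\a(\sqrt{\sigma_n}\,u_n,\cG)-E_\a(u_n,\cG)\to0$. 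This gives $\cE_\a(\mu_0,\cG)\le\liminf_n\cE_\a(\mu_n,\cG)$ and completes continuity at interior points.

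Finally, continuity at $\mu_0=0$ must be treated on its own, since the rescaling degenerates. The inequality $\cE_\a(\mu,\cG)<0$ for $\mu>0$ yields $\limsup_{\mu\to0^+}\cE_\a(\mu,\cG)\le0$, while minimizing the right-hand side of the coercivity estimate \eqref{E da sotto} over $\lp{u'}{2}\ge0$ produces (because $\tfrac{p-2}{2}<2$) a lower bound for $\cE_\a(\mu,\cG)$ that vanishes as $\mu\to0^+$. Hence $\cE_\a(\mu,\cG)\to0=\cE_\a(0,\cG)$, and the extended map is continuous on all of $[0,\tilde\mu_\cG)$. I expect the uniform $H^1$-bound for graphs without terminal points—forcing the use of Lemma \ref{lem: mod GN} rather than the plain inequality \eqref{E da sotto}—to be the delicate point of the argument.
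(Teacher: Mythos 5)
Your argument is correct and follows essentially the same route as the paper: mass-rescaling of almost-minimizers, with the uniform $H^1$ bound supplied by \eqref{E da sotto} for graphs with a terminal point and by the modified Gagliardo--Nirenberg inequality of Lemma \ref{lem: mod GN} otherwise. The only (harmless) deviation is at $\mu_0=0$, where you minimize the coercivity lower bound directly in $\lp{u'}{2}$ instead of first extracting $\lp{u_n'}{2}\to 0$ from an almost-minimizing sequence as the paper does.
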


\begin{proof}
Let $\mu \in (0,\tilde \mu_{\cG})$, $\mu_n \to \mu$, and $\eps>0$ such that $\mu + \eps<\tilde \mu_{\cG}$. For every $n$ large, we have that $\mu_n <\mu+\eps$ and there exists $u_n \in H^1_{\mu_n}(\cG)$ such that 
\[
\cE_\a(\mu_n,\cG) \le E_\a(u_n, \cG) \le \cE_\alpha(\mu_n, \cG) + \frac1n \le 1,
\]
where we used the fact that $\cE_\alpha(\mu_n, \cG)<0$. If $\cG$ has no terminal point, then this estimate and the modified Gagliardo-Nirenberg inequality (Lemma \ref{lem: mod GN}) imply that
\[
\frac12\left(1-\left(\frac{\mu+\eps}{\mu_{\R}}\right)^2\right)\lp{u_n'}2^2 - C_{\cG} (\mu+\eps)^\frac12- \frac{\alpha C_p(\cG)}p (\mu+\eps)^\frac{p+2}{4}\lp{u_n'}{2}^{\frac{p-2}{2}} \le E_\a(u_n,\cG) \le 1.
\] 
Thanks to the choice of $\eps$, the coefficient of $\lp{u_n'}2^2$ is positive, and, since $p<6$, we infer that $\{u_n\}$ is bounded in $H^1(\cG)$. The same argument also works when $\cG$ has a terminal point, by simply using the standard Gagliardo-Nirenberg inequality instead of the modified one. Thus, in both cases we consider $v_n = \mu^{1/2} u_n/\mu_n^{1/2} \in H^1_\mu(\cG)$, and notice that
\[
\begin{split}
\cE_\a(\mu,\cG) &\le E_\a(v_n, \cG) \\
&  = E_\a(u_n, \cG) + \frac12\left(\frac{\mu}{\mu_n}-1\right)\lp{u_n'}{2}^2 \\
& \hphantom{=E_\a(u_n, \cG) }  \ - \frac16\left( \left(\frac{\mu}{\mu_n}\right)^3-1\right)\lp{u_n}{6}^6 - \frac{\alpha}{p} \left( \left(\frac{\mu}{\mu_n}\right)^{\frac{p}2}-1\right)\lp{u_n}{p}^p \\
& = E_\alpha(u_n, \cG) + o(1) \le \cE_\a(\mu_n, \cG) + o(1),
\end{split}
\]
 as $n \to \infty$, where the last equality follows from the boundedness of $\{u_n\}$ in $H^1(\cG)$ (and hence also in $L^6$ and $L^p$). We deduce that
 \begin{equation}\label{low cont}
 \cE_\a(\mu,\cG) \le \liminf_{n \to \infty} \cE_\a(\mu_n,\cG).
 \eeq
 On the other hand, let $\{w_n\} \subset H^1_\mu(\cG)$ be a minimizing sequence for $\cE_\a(\mu,\cG)$. By using the same argument based on the Gagliardo-Nirenberg inequality as above, we have that $\{w_n\}$ is bounded in $H^1(\cG)$, and hence, by letting $z_n = \mu_n^{1/2} w_n/\mu^{1/2} \in H^1_{\mu_n}(\cG)$, we have that
 \[
\cE_\a(\mu_n,\cG) \le E_\a(z_n, \cG) = E_\a(w_n, \cG) + o(1) = \cE_\a(\mu, \cG) + o(1),
\]
whence
\[
\limsup_{n \to \infty} \cE_\a(\mu_n,\cG) \le \cE_\a(\mu,\cG).
\]
This and \eqref{low cont} prove the continuity of $\cE_\a(\cdot,\cG)$ at any $\mu \in (0,\tilde \mu_{\cG})$. 

Let now $\mu_n \to 0^+$. We aim to show that $\cE_\a(\mu_n, \cG) \to 0$. We consider again
$u_n \in H^1_{\mu_n}(\cG)$ such that 
\[
\cE_\a(\mu_n,\cG) \le E_\a(u_n, \cG) \le \cE_\alpha(\mu_n, \cG) + \frac1n \le \frac1n.
\] 
The (standard) Gagliardo-Nirenberg inequality yields
\[
\frac12\left(1-\left(\frac{\mu_n}{\mu_{\R}}\right)^2\right)\lp{u_n'}2^2 - \frac{\alpha C_p(\cG)}p (\mu_n)^\frac{p+2}{4}\lp{u_n'}{2}^{\frac{p-2}{2}} \le \frac1n.
\] 
We deduce firstly that $\{u_n\}$ is bounded in $H^1$, and, afterwards, that $\lp{u_n'}2^2 \to 0$, which in turn implies that $\cE_\a(\mu_n, \cG) = E_\a(u_n,\cG) + o(1) \to 0$. 
\end{proof}

Next we show that $\cE_\alpha(\cdot, \cG)$ is strictly subadditive.

\begin{lemma}\label{lem: sub}
Let $\mu_1,\mu_2>0$ be such that $\mu_1+\mu_2 < \tilde \mu_{\cG}$. Then
\[
\cE_\a(\mu_1+\mu_2, \cG) < \cE_\a(\mu_1, \cG) + \cE_\a(\mu_2, \cG).
\]
In addition, the map $\cE_\alpha(\mu, \cG)$ is strictly decreasing in $[0,\tilde \mu_{\cG})$.
\end{lemma}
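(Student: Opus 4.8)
The plan is to derive both assertions from a single \emph{strict scaling inequality}: for every $\mu \in (0,\tilde\mu_{\cG})$ and every $\theta > 1$ with $\theta\mu < \tilde\mu_{\cG}$, one should have $\cE_\a(\theta\mu,\cG) < \theta\,\cE_\a(\mu,\cG)$. Geometrically this says that $\mu \mapsto \cE_\a(\mu,\cG)/\mu$ is strictly decreasing, and granting it, strict subadditivity is purely formal: setting $s = \mu_1+\mu_2$ and applying the scaling inequality with base mass $\mu_i$ and factor $\theta = s/\mu_i > 1$ (admissible since $0 < \mu_i < s < \tilde\mu_{\cG}$) gives $\cE_\a(\mu_i,\cG) > (\mu_i/s)\,\cE_\a(s,\cG)$ for $i=1,2$; adding the two inequalities yields
\[
\cE_\a(\mu_1,\cG) + \cE_\a(\mu_2,\cG) > \frac{\mu_1+\mu_2}{s}\,\cE_\a(s,\cG) = \cE_\a(\mu_1+\mu_2,\cG).
\]

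To establish the scaling inequality I would argue by a direct test-function construction. Fixing $\mu$ and $\theta$ as above, let $\{u_n\} \subset H^1_\mu(\cG)$ be a minimizing sequence for $\cE_\a(\mu,\cG)$, and set $v_n := \sqrt{\theta}\,u_n \in H^1_{\theta\mu}(\cG)$, an admissible competitor for $\cE_\a(\theta\mu,\cG)$. A direct computation gives
\[
E_\a(v_n,\cG) - \theta\,E_\a(u_n,\cG) = -\frac{\theta^3-\theta}{6}\lp{u_n}{6}^6 - \frac{\alpha\,(\theta^{p/2}-\theta)}{p}\lp{u_n}{p}^p,
\]
where both coefficients $\theta^3-\theta$ and $\theta^{p/2}-\theta$ are strictly positive because $\theta>1$ and $3,\,p/2 > 1$ (here I use $p>2$). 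Writing $\beta := \min\{\theta^3-\theta,\ \theta^{p/2}-\theta\} > 0$ and using $\alpha>0$, I bound the right-hand side by $-\beta\bigl(\tfrac16\lp{u_n}{6}^6 + \tfrac{\alpha}{p}\lp{u_n}{p}^p\bigr)$; since
\[
\frac16\lp{u_n}{6}^6 + \frac{\alpha}{p}\lp{u_n}{p}^p = \frac12\lp{u_n'}{2}^2 - E_\a(u_n,\cG) \ge -E_\a(u_n,\cG),
\]
I obtain $E_\a(v_n,\cG) \le (\theta+\beta)\,E_\a(u_n,\cG)$. Passing to the limit and using $\cE_\a(\theta\mu,\cG) \le E_\a(v_n,\cG)$ gives $\cE_\a(\theta\mu,\cG) \le (\theta+\beta)\,\cE_\a(\mu,\cG)$.

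The crucial point -- and the only place where strictness is gained -- is that $\cE_\a(\mu,\cG) < 0$ on $(0,\tilde\mu_{\cG})$ by Lemma \ref{lem: level}: combined with $\beta>0$ this upgrades the last estimate to $\cE_\a(\theta\mu,\cG) \le (\theta+\beta)\cE_\a(\mu,\cG) < \theta\,\cE_\a(\mu,\cG)$, which is exactly the scaling inequality. I regard this dependence on negativity as the heart of the matter and the main potential obstacle: had the level been $0$ (as happens in the defocusing small-mass regime) the identical computation would produce only a non-strict inequality, consistently with the different behaviour there. Finally, strict monotonicity follows at once: for $0 < \mu_1 < \mu_2 < \tilde\mu_{\cG}$, subadditivity applied to the splitting $\mu_2 = \mu_1 + (\mu_2-\mu_1)$ together with $\cE_\a(\mu_2-\mu_1,\cG) < 0$ gives $\cE_\a(\mu_2,\cG) < \cE_\a(\mu_1,\cG) + \cE_\a(\mu_2-\mu_1,\cG) < \cE_\a(\mu_1,\cG)$, while the endpoint $\mu_1 = 0$ is handled directly by the extension $\cE_\a(0,\cG)=0 > \cE_\a(\mu_2,\cG)$.
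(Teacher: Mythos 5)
Your proof is correct and follows essentially the same route as the paper: the strict scaling inequality $\cE_\a(\theta\mu,\cG)<\theta\,\cE_\a(\mu,\cG)$ obtained by testing with $\sqrt{\theta}\,u_n$, with strictness ultimately coming from the negativity of the level on $(0,\tilde\mu_{\cG})$. The only (harmless) difference is cosmetic: you extract a uniform quantitative gap $(\theta+\beta)$ before passing to the limit, whereas the paper rules out $\lp{u_n}{p}^p+\lp{u_n}{6}^6\to 0$ by contradiction; both hinge on the same fact that $\cE_\a(\mu,\cG)<0$.
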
 

\begin{proof}
Let $\mu \in (0,\tilde \mu_{\cG})$ and $\theta>1$ be such that $\theta \mu <\tilde \mu_{\cG}$. If $\{u_n\} \subset H^1_\mu(\cG)$ is a minimizing sequence for $\cE_\a(\mu, \cG)$, then $\theta^{1/2} u_n \in H^1_{\theta \mu}(\cG)$, and 
\[
\begin{split}
\cE_\a(\theta \mu, \cG) & \le \frac{\theta}2 \lp{u_n'}{2}^2 - \frac{\theta^3}6\lp{u_n}{6}^6 - \frac{\alpha \theta^{p/2}}{p}\lp{u_n}{p}^p < \theta E_\a(u_n,\cG),
\end{split}
\]
since $\theta>1$, $p>2$, and $\alpha>0$. It follows plainly that $\cE_\a(\theta \mu, \cG) \le \theta \cE_\alpha(\mu, \cG)$, with equality only if $\lp{u_n}{p}^p + \lp{u_n}{6}^6 \to 0$. This is however not possibile, since, if it were true, it would imply that
\[
0> \cE_\a(\mu, \cG) = \lim_{n} E_\alpha(u_n, \cG) \ge \liminf_n \frac12 \lp{u_n'}{2}^2 \ge 0,
\]
a contradiction. Therefore
\beq\label{dis sub}
\cE_\a(\theta \mu, \cG) < \theta \cE_\alpha(\mu, \cG),
\eeq
which implies that $\cE_\alpha(\mu, \cG)$ is strictly decreasing in $\mu \in [0,\tilde \mu_{\cG})$. Moreover, from \eqref{dis sub} we also deduce the subadditivity: if $0 <\mu_1 \le \mu_2$ with $\mu_1+\mu_2 =\mu <\tilde \mu_{\cG}$, then by using \eqref{dis sub} twice we infer that
\[
 \cE_\a(\mu, \cG) < \frac{\mu}{\mu_2} \cE_\a(\mu_2, \cG) = \cE_\a(\mu_2, \cG) + \frac{\mu_1}{\mu_2} \cE_\a(\mu_2, \cG) \le \cE_\a(\mu_2, \cG) +  \cE_\a(\mu_1, \cG),
 \]
 which is the desired result.
\end{proof}

Finally:

\begin{lemma}\label{lem: inf <=}
For $\mu \in (0,\tilde \mu_{\cG})$, let 
\[
\tilde \cE_\a(\mu,\cG) := \inf\left\{ E_\alpha(u, \cG): \ u \in H^1_\mu(\cG) \ \text{and} \  \int_{\cG} |u|^2 \le \mu \right\}.
\]
Then $\tilde \cE_\a(\mu,\cG) = \cE_\a(\mu,\cG)$.
\end{lemma}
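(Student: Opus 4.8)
The plan is to prove the two inequalities separately. The inequality $\tilde \cE_\a(\mu,\cG) \le \cE_\a(\mu,\cG)$ is immediate: every $u \in H^1_\mu(\cG)$ trivially satisfies $\lp{u}{2}^2 = \mu \le \mu$, hence is an admissible competitor for the relaxed problem defining $\tilde \cE_\a(\mu,\cG)$, whose infimum is taken over the \emph{larger} class of functions with mass at most $\mu$. Thus the only substantive work lies in the reverse inequality $\tilde \cE_\a(\mu,\cG) \ge \cE_\a(\mu,\cG)$.

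For the reverse inequality I would argue competitor-by-competitor, reducing everything to the already established monotonicity of the mass-to-energy map. Let $u$ be any admissible function for $\tilde\cE_\a(\mu,\cG)$, and set $m := \lp{u}{2}^2 \le \mu$. If $m=0$, then $u \equiv 0$, so $E_\a(u,\cG)=0 > \cE_\a(\mu,\cG)$, the strict inequality following from Lemma \ref{lem: level}, which guarantees $\cE_\a(\mu,\cG)<0$ for $\mu \in (0,\tilde\mu_\cG)$. If instead $m \in (0,\mu]$, then $u$ is an admissible competitor for the constrained problem at mass $m$, so $E_\a(u,\cG) \ge \cE_\a(m,\cG)$ by definition of the ground state level. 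Now invoke the strict monotonicity of $\nu \mapsto \cE_\a(\nu,\cG)$ on $[0,\tilde\mu_\cG)$ proved in Lemma \ref{lem: sub}: since $m \le \mu$ we obtain $\cE_\a(m,\cG) \ge \cE_\a(\mu,\cG)$. In either case $E_\a(u,\cG) \ge \cE_\a(\mu,\cG)$, and passing to the infimum over all admissible $u$ yields $\tilde\cE_\a(\mu,\cG) \ge \cE_\a(\mu,\cG)$. Combining the two bounds gives the claim.

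There is no genuine obstacle here, precisely because the two facts that do the work have already been secured upstream: the strict decrease of $\cE_\a(\cdot,\cG)$ in the mass (Lemma \ref{lem: sub}) and the sign information $\cE_\a(\mu,\cG)<0$ (Lemma \ref{lem: level}), the latter being needed only to dispose of the trivial competitor $u\equiv 0$. The single point deserving a separate line is the degenerate case $m=0$, handled above; every other competitor is dealt with by a one-step comparison $E_\a(u,\cG)\ge\cE_\a(m,\cG)\ge\cE_\a(\mu,\cG)$. Had the monotonicity of the level not been available, one would instead have had to transfer any ``missing'' mass $\mu-m$ out to infinity along an unbounded edge at vanishing energy cost, using the non-compactness of $\cG$; but since Lemma \ref{lem: sub} is already in hand, the monotonicity argument is the most economical route.
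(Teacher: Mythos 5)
Your proof is correct, and it is genuinely more direct than the one in the paper. You reduce everything to a competitor-by-competitor comparison: for any admissible $u$ with $m:=\lp{u}{2}^2\le\mu$ you chain $E_\a(u,\cG)\ge\cE_\a(m,\cG)\ge\cE_\a(\mu,\cG)$, the second inequality coming from the strict decrease of $\nu\mapsto\cE_\a(\nu,\cG)$ on $[0,\tilde\mu_\cG)$ (Lemma \ref{lem: sub}), with the degenerate case $m=0$ disposed of by the sign information $\cE_\a(\mu,\cG)<0$ from Lemma \ref{lem: level}. The paper instead runs a minimizing-sequence argument: it first shows $\tilde\cE_\a(\mu,\cG)\in(-\infty,0)$, takes a minimizing sequence with masses $\mu_n\to\nu\in[0,\mu]$, proves $H^1$-boundedness via Gagliardo--Nirenberg, renormalizes to the fixed mass $\nu$ as in Lemma \ref{lem: cont} to conclude $\tilde\cE_\a(\mu,\cG)=\cE_\a(\nu,\cG)$, and only then invokes the strict monotonicity to rule out $\nu<\mu$. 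Both routes ultimately rest on Lemma \ref{lem: sub}, but yours bypasses the boundedness and renormalization steps entirely, so it is shorter and uses strictly fewer ingredients; the paper's version has the mild side benefit of exhibiting the infimum as $\cE_\a(\nu,\cG)$ for an explicit limiting mass and of rehearsing the normalization technique reused elsewhere (e.g.\ in Lemma \ref{lem: compactness}), but for the statement as given your argument is the more economical one. One cosmetic remark: the lemma's displayed definition writes $u\in H^1_\mu(\cG)$ together with $\int_\cG|u|^2\le\mu$, which taken literally would make the lemma trivial; like the paper's own proof, you correctly read the admissible class as $u\in H^1(\cG)$ with $\lp{u}{2}^2\le\mu$.
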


\begin{proof}
Proceeding as in Lemma \ref{lem: level}, we have that $\tilde \cE_\a(\mu,\cG) \in (-\infty,0)$. Let $\{u_n\} \subset H^1(\cG)$ be a minimizing sequence for $\tilde \cE_\a(\mu,\cG)$, and let $\mu_n = \lp{u_n}{2}^2$ be the mass of $u_n$. Clearly $\mu_n \to \nu \in [0,\mu]$ (up to a subsequence) and, by using the Gagliardo-Nirenberg inequality (the standard one if $\cG$ has a terminal point, the modified one if not), it is easy to check that $\{u_n\}$ is bounded in $H^1(\cG)$. Then, arguing as in Lemma \ref{lem: cont}, it is possible to see that the sequence $v_n:=\nu^{1/2} u_n/\mu_n^{1/2}$ is a new minimizing sequence: $E_\a(v_n,\cG) \to \tilde \cE_\a(\mu,\cG)$. Since all the functions $v_n$ have same mass equal to $\nu \le \mu$, we have that 
\[
\tilde \cE_\a(\mu, \cG) \le \cE_\a(\nu,\cG) \le E_\a(v_n, \cG) \to \tilde \cE_\a(\mu, \cG),
\]
whence it follows that $\tilde \cE_\a(\mu, \cG) = \cE_\a(\nu,\cG)$. Now, if $\nu = \mu$, then the proof is complete. If instead $\nu<\mu$, recalling that $\cE_\a(\cdot, \cG)$ is strictly decreasing, we obtain $\tilde \cE_\a(\mu, \cG) = \cE_\a(\nu,\cG) >  \cE_\a(\mu,\cG)$, which is a contradiction since by definition $\tilde \cE_\a(\mu, \cG) \le \cE_\a(\mu, \cG)$.
\end{proof}

\begin{lemma}\label{lem: compactness}
For $\mu \in (0,\tilde \mu_{\cG})$, any minimizing sequence $\{u_n\} \subset H^1_\mu(\cG)$ for $\cE_\alpha(\mu, \cG)$ is weakly compact in $H^1(\cG)$. If $u_n \weak u$ weakly in $H^1(\cG)$, then one of the following alternative occurs:
\begin{itemize}
\item[($i$)] either $u_n \to 0$ in $L^\infty_{\loc}(\cG)$, and $u \equiv 0$; 
\item[($ii$)] or $u \in H^1_\mu(\cG)$, $u_n \to u$ strongly in $H^1(\cG) \cap L^6(\cG)$, and $u$ is a ground state for $\cE_\a(\mu, \cG)$.
\end{itemize}
\end{lemma}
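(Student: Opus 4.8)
The plan is to run a concentration--compactness argument: bound the minimizing sequence, pass to a weak limit, split it by a Brezis--Lieb decomposition, and use the \emph{strict} subadditivity of Lemma~\ref{lem: sub} to forbid the intermediate (dichotomy) case. First I would establish weak compactness exactly as in the proofs of Lemma~\ref{lem: level} and Lemma~\ref{lem: cont}: a minimizing sequence $\{u_n\}$ satisfies $E_\a(u_n,\cG)\to\cE_\a(\mu,\cG)<0$, hence is bounded above, and the Gagliardo--Nirenberg inequality (the standard one when $\cG$ has a terminal point, so that $\mu<\tilde\mu_{\cG}=\mu_{\cG}$, and the modified one of Lemma~\ref{lem: mod GN} when $\cG$ has none, so that $\mu<\tilde\mu_{\cG}=\mu_{\R}$) gives a coercive lower bound for $E_\a(u_n,\cG)$ in $\lp{u_n'}{2}^2$; here one uses $p<6$ and that the coefficient $\tfrac12(1-(\mu/\mu_{\R})^2)$, respectively $\tfrac12(1-(\mu/\mu_{\cG})^2)$, is strictly positive. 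Since the mass is fixed, $\{u_n\}$ is bounded in $H^1(\cG)$, so up to a subsequence $u_n\weak u$ in $H^1(\cG)$; because $H^1$ of any bounded subgraph embeds compactly into $C^0$, along a further subsequence $u_n\to u$ in $L^\infty_{\loc}(\cG)$ and a.e.\ on $\cG$. If $u\equiv 0$ we are in alternative $(i)$; the whole content lies in the case $u\not\equiv 0$.

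So assume $u\not\equiv 0$ and set $\nu:=\lp{u}{2}^2\in(0,\mu]$, the bound $\nu\le\mu$ coming from weak lower semicontinuity of the $L^2$ norm. Writing $w_n:=u_n-u$, the exact Hilbert-space expansions for $\lp{u_n}{2}^2$ and $\lp{u_n'}{2}^2$ (from $u_n\weak u$ in $H^1$) together with the Brezis--Lieb lemma for the $L^6$ and $L^p$ terms (legitimate by the a.e.\ convergence and the $L^6,L^p$ boundedness coming from the $H^1$ bound) yield
\[
\lp{w_n}{2}^2=\mu-\nu+o(1),\qquad E_\a(u_n,\cG)=E_\a(u,\cG)+E_\a(w_n,\cG)+o(1).
\]
Suppose, for contradiction, that $\nu<\mu$. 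Then $\theta_n:=\lp{w_n}{2}^2\to\mu-\nu\in(0,\tilde\mu_{\cG})$, so for large $n$ we have $E_\a(w_n,\cG)\ge\cE_\a(\theta_n,\cG)$, and the continuity of Lemma~\ref{lem: cont} gives $\liminf_n E_\a(w_n,\cG)\ge\cE_\a(\mu-\nu,\cG)$. Since also $E_\a(u,\cG)\ge\cE_\a(\nu,\cG)$, passing to the limit in the decomposition gives
\[
\cE_\a(\mu,\cG)\ge\cE_\a(\nu,\cG)+\cE_\a(\mu-\nu,\cG),
\]
which contradicts Lemma~\ref{lem: sub} (applicable since $\nu>0$, $\mu-\nu>0$, and $\nu+(\mu-\nu)=\mu<\tilde\mu_{\cG}$). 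Hence $\nu=\mu$.

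It remains to upgrade to strong convergence and minimality. From $\lp{u_n}{2}^2\to\mu=\lp{u}{2}^2$ and $u_n\weak u$ in $L^2(\cG)$ we get $u_n\to u$ strongly in $L^2(\cG)$, and then the Gagliardo--Nirenberg interpolation (using the $H^1$ bound) gives $u_n\to u$ in $L^q(\cG)$ for every $q>2$, in particular in $L^6$ and $L^p$. Consequently
\[
\cE_\a(\mu,\cG)=\lim_n E_\a(u_n,\cG)\ge\tfrac12\lp{u'}{2}^2-\tfrac16\lp{u}{6}^6-\tfrac{\a}{p}\lp{u}{p}^p=E_\a(u,\cG)\ge\cE_\a(\mu,\cG),
\]
the first inequality being weak lower semicontinuity of $\lp{\,\cdot'}{2}^2$ and the last holding since $u\in H^1_\mu(\cG)$. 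Thus all inequalities are equalities: $u$ is a ground state, and moreover $\lp{u_n'}{2}\to\lp{u'}{2}$, which together with $u_n'\weak u'$ yields $u_n\to u$ strongly in $H^1(\cG)$; the $L^6$ convergence was already obtained. This is precisely alternative $(ii)$.

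The main obstacle is the exclusion of the intermediate regime $0<\nu<\mu$, and this is where the preliminary lemmas pay off: one needs both the Brezis--Lieb splitting to behave additively \emph{across the critical $L^6$ term} and the \emph{strict} inequality in the subadditivity of $\cE_\a(\cdot,\cG)$, since a merely non-strict subadditivity would leave the split mass as a genuine possibility. A minor technical point to keep track of is that $\mu-\nu<\tilde\mu_{\cG}$, which guarantees that $\cE_\a(\mu-\nu,\cG)$ is finite and that Lemma~\ref{lem: cont} may be applied along the remainder masses $\theta_n$.
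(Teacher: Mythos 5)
Your proposal is correct and follows essentially the same route as the paper's proof: boundedness via the (standard or modified) Gagliardo--Nirenberg inequality, a Brezis--Lieb splitting of the energy, the continuity of $\cE_\a(\cdot,\cG)$ applied to the remainder masses, and the strict subadditivity of Lemma~\ref{lem: sub} to exclude $0<\nu<\mu$, followed by the upgrade to strong $H^1\cap L^6$ convergence. The only cosmetic difference is that you phrase the dichotomy as ``$u\equiv 0$ versus $u\not\equiv 0$'' while the paper rules out the intermediate mass first and then splits into $m=0$ or $m=\mu$; the substance is identical.
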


\begin{proof}
Thanks to Lemmas \ref{lem: cont}-\ref{lem: sub}, we can adapt the proof of \cite[Theorem 3.2]{AST3}. We report a sketch for the sake of completeness. 

The boundedness of $\{u_n\}$ follows by the Gagliardo-Nirenberg inequality (the standard one if $\cG$ has a terminal point, the modified one if not). Therefore, up to a subsequence $u_n \weak u$ weakly in $H^1(\cG)$, locally uniformly on $\cG$, and almost everywhere. In particular,
\[
m:= \int_{\cG} u^2 \le \liminf_n \int_{\cG} u_n^2 = \mu.
\]
Now, by using the Brezis-Lieb lemma and the weak convergence, it is not difficult to check that
\[
E_\a(u_n , \cG) = E_\a(u, \cG) + E_\a(u_n-u, \cG).
\]
Denoting by $\nu_n = \int_{\cG} |u_n-u|^2$, we have that $\nu_n \to \mu-m$ and, by Lemma \ref{lem: cont}, we deduce that 
\beq\label{above ineq}
\begin{split}
\cE_\a(\mu, \cG) + o(1) &= E_\a(u_n , \cG) = E_\a(u, \cG) + E_\a(u_n-u, \cG) \\
& \ge E_\a(u, \cG) + \cE_\a(\mu-m, \cG) + o(1) \\
& \ge  \cE_\a(m, \cG) + \cE_\a(\mu-m, \cG) + o(1)
\end{split}
\eeq
as $n \to \infty$, whence it follows that $\cE_\a(\mu, \cG) \ge \cE_\a(m, \cG) + \cE_\a(\mu-m)$. If $0<m<\mu$, this is in contradiction with the strict subadditivity of $\cE_\a(\cdot\,,\cG)$ in $(0,\tilde \mu_{\cG})$, Lemma \ref{lem: sub}, and hence either $m=0$, or $m=\mu$. If $m=0$, then alternative ($i$) of the thesis holds; if instead $m=\mu$, then the \eqref{above ineq} and the fact that $\cE_\a(0,\cG) = 0$ (see Lemma \ref{lem: cont}) imply that
\[
\cE_\a(\mu, \cG) \ge E_\a(u, \cG) + \cE_\a(0,\cG) = E_\a(u, \cG),
\]
so that $u \in H^1_\mu(\cG)$ and is the desired ground state. Moreover, since $m=\mu$, $u_n \to u$ strongly in $L^2(\cG)$; this and the boundedness of $\{u_n\}$ in $H^1$ imply that $u_n \to u$ strongly in $L^p(\cG)$ and in $L^6(\cG)$. Then, using that $E_\a(u, \cG) = \lim_n E_\a(u_n, \cG)$, the strong convergence in $H^1(\cG)$ follows.
\end{proof}

We are finally ready for the:
\begin{proof}[Proof of Theorems \ref{thm: main foc} and Corollary \ref{cor: foc}]
Thanks to Lemmas \ref{lem: cont}-\ref{lem: compactness}, we can adapt the proofs of Theorems 3.3 and Corollary 3.4 in \cite{AST3}. Again, we sketch the proof here.

Let $\{u_n\}$ be a minimizing sequence for $\cE_\a(\mu, \cG)$, with $\mu \in (0,\tilde \mu)$. Clearly, we can suppose that $u_n \ge 0$ on $\cG$, for every $n$. To prove Theorem \ref{thm: main foc}, it is sufficient to rule out alternative ($i$) in Lemma \ref{lem: compactness}. Let $\eps_n$ be the maximum of $u_n$ on the compact core of $\cG$ (that is, the subgraph of $\cG$ obtained by removing from $\cG$ all the open unbounded edges). If, by contradiction, $u_n \to 0$ locally uniformly on $\cG$, then $\eps_n \to 0$. If moreover $\|u_n\|_{L^\infty(\cG)} = \eps_n$, then clearly also $\lp{u_n}{p}^p + \lp{u_n}{6}^6 \to 0$, and hence \[
\cE_\a(\mu, \cG) = \lim_n E_\a(u_n, \cG) \ge \liminf_n \frac12 \lp{u_n'}{2}^2 \ge 0,
\]
in contradiction with the fact that $\cE_\a(\mu, \cG)<0$. Therefore, $\|u_n\|_{L^\infty(\cG)} > \eps_n$, and the function $v_n := \max\{0, u_n-\eps_n\}$, which is equal to $0$ on the compact core of $\cG$, does not vanish identically. On the other hand, on each half-line of $\cG$ we know that $u_n \to 0$, and hence, for every $t \in (0,\max v_n)$ the number of preimages $v_n^{-1}(t)$ is at least $2$. Clearly $\lp{v_n}{2} \le \mu$; thus, by considering the symmetric rearrangement $\hat v_n$ of $v_n$ on $\R$, by \cite[Proposition 3.1]{AST1} we infer that
\beq\label{riarr}
E_\a(v_n, \cG) \ge E_\a(\hat v_n, \R) \ge \tilde \cE_\a(\mu, \R) = \cE_\a(\mu, \R),
\eeq
where the last inequality follows by Lemma \ref{lem: inf <=}. On the other hand, since 
\[
\int_{\cG} |u_n - v_n|^q \le \lp{u_n-v_n}{\infty}^{q-2} \int_{\cG} |u_n-v_n|^2 \le \eps_n^{q-2} \mu \to 0
\]  
for every $q >2$, the sequence $\{v_n\}$ is still a minimizing sequence for $\tilde \cE_\a(\mu, \cG) = \cE_\a(\mu, \cG) < \cE_\a(\mu, \R)$, in contradiction with \eqref{riarr}. This completes the proof of the theorem. Corollary \ref{cor: foc} follows straightforwardly.
\end{proof}

%\begin{remark}
%The proof of Lemma \ref{lem: inf <=} exploits the validity of Lemmas \ref{lem: cont} and \ref{lem: sub}, while the fact that $\alpha>0$ is never used. This means that if we can show that Lemmas \ref{lem: cont} and \ref{lem: sub} hold for some $\alpha<0$, then also Lemma \ref{lem: inf <=} holds for such values. The same discussion applies to Lemma \ref{lem: compactness}, which relies on Lemmas \ref{lem: cont}-\ref{lem: inf <=}.
%\end{remark}

\begin{remark}
Although the conclusion of the proof of Theorem \ref{thm: main foc} is borrowed from \cite{AST3}, the key ingredients (continuity, monotonicity, subadditivity of the ground state energy level) cannot be directly adapted from \cite{AST3}. Indeed, the arguments in \cite{AST3} rely on nice scaling properties of the homogeneous NLS, which make the ground state energy level strictly concave with respect to $\mu \in (0,+\infty)$. These properties are destroyed when dealing with a combined inhomogeneous nonlinearity, and, in turn, $\mu \mapsto \cE_\a(\mu,\cG)$ is no more globally strictly concave. We overcame this difficulty by directly proving monotonicity and subadditivity of $\mu \mapsto \cE_\a(\mu,\cG)$ in the interval $(0,\tilde \mu_{\cG})$. %The failure of the scis what induces the appearance of the threshold value $\tilde \mu_{\cG}$ for the mass, and 
\end{remark}

%We focus now on more specific classes of graphs.

\subsection{Graphs which admit a cycle covering.} 

\begin{proof}[Proof of Theorem \ref{thm: cycle}]
The proof is a straightforward adaptation of \cite[Theorem 2.5]{AST1}. As in the application of Theorem \ref{thm: main foc}, the adaptation from \cite{AST1} is possible since ground states for $\cE_\a(\mu, \R)$ have the same property of the standard soliton for the homogeneous problem (it is even with respect to a point $x_0$, and strictly decreasing from that point on).
\end{proof}

\subsection{Graphs with a terminal edge.} 

In this subsection we prove Propositions \ref{prop: ex tip} and \ref{prop: non ex tip}. In proving the former result, we shall use the following preliminary lemma.

\begin{lemma}\label{lem: gs R+ R}
For every $\mu \in (0, \mu_{\R^+})$ and $\alpha > 0$, we have that
\[
\cE_\a(\mu, \R^+) < \cE_\a(\mu, \R).
\]
\end{lemma}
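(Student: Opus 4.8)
The plan is to reduce this inequality, which compares the half-line with the full line, to a strict subadditivity statement on $\R$ alone that has already been established. The key is the exact doubling identity \eqref{gs R e R+}, namely $\cE_\a(2\mu,\R)=2\cE_\a(\mu,\R^+)$, valid for $\alpha>0$ and $\mu\in(0,\mu_{\R^+})$. Once this is in hand, proving $\cE_\a(\mu,\R^+)<\cE_\a(\mu,\R)$ becomes equivalent to proving the strict inequality $\cE_\a(2\mu,\R)<2\cE_\a(\mu,\R)$, which is a self-contained fact about the line.

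First I would verify that the hypotheses of Lemma~\ref{lem: sub} are met for the graph $\cG=\R$. Since $\R$ has no terminal point, $\tilde\mu_\R=\mu_\R$. For $\mu\in(0,\mu_{\R^+})$, and recalling that $\mu_{\R^+}=\mu_\R/2$, we have $2\mu<\mu_\R=\tilde\mu_\R$, so the pair $\mu_1=\mu_2=\mu$ satisfies $\mu_1+\mu_2<\tilde\mu_\R$. Applying Lemma~\ref{lem: sub} on $\R$ with this choice yields
\[
\cE_\a(2\mu,\R)<\cE_\a(\mu,\R)+\cE_\a(\mu,\R)=2\cE_\a(\mu,\R).
\]

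Finally I would combine the two displays: substituting the doubling identity $\cE_\a(2\mu,\R)=2\cE_\a(\mu,\R^+)$ into the left-hand side of the strict subadditivity inequality gives $2\cE_\a(\mu,\R^+)<2\cE_\a(\mu,\R)$, and dividing by $2$ produces the assertion. There is essentially no analytic obstacle, since all the heavy lifting has been done in the preliminaries and in Lemma~\ref{lem: sub}; the only points requiring care are to confirm that Lemma~\ref{lem: sub} applies verbatim to $\cG=\R$ (it does, being stated for an arbitrary non-compact graph) and that the mass restriction $2\mu<\mu_\R$ is exactly equivalent to the hypothesis $\mu<\mu_{\R^+}$. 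An alternative, more hands-on route would exploit the known shape of the ground state $\phi_{\mu,\alpha}$ on $\R$ — even about some point $x_0$ and strictly decreasing away from it — to construct a competitor on $\R^+$ with energy strictly below $\cE_\a(\mu,\R)$; but the subadditivity argument is shorter and avoids explicit rearrangement estimates.
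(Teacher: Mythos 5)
Your proof is correct and follows essentially the same route as the paper: both reduce the claim to the strict inequality $\cE_\a(2\mu,\R)<2\cE_\a(\mu,\R)$ and then conclude via the doubling identity \eqref{gs R e R+}. The only (harmless) difference is that you obtain that strict inequality by citing Lemma~\ref{lem: sub} for $\cG=\R$ (noting $2\mu<\mu_\R=\tilde\mu_\R$), whereas the paper proves this particular instance directly by evaluating $E_\a(\sqrt{2}\,\phi_{\mu,\a},\R)$ for a ground state $\phi_{\mu,\a}$ of $\cE_\a(\mu,\R)$ --- the same mass-scaling mechanism that underlies Lemma~\ref{lem: sub}.
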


\begin{proof}
Let $\phi_{\mu,\a}$ be a ground state for $\cE_\a(\mu, \R)$. Since $\alpha>0$ and $p>2$, we have that
\[
\begin{split}
E_\a(\sqrt{2} \phi_{\mu,\a}, \R)  & = 2 \left[ \frac12 \| \phi_{\mu, \a}'\|_{L^2(\R)}^2  -\frac46 \| \phi_{\mu, \a}\|_{L^6(\R)}^6- \frac{\alpha}{p}2^\frac{p-2}2 \| \phi_{\mu, \a}\|_{L^p(\R)}^p \right] \\
& <2 E_\a(\phi_{\mu,\a}, \R) = 2 \cE_\a(\mu,\R).
\end{split}
\] 
Then, recalling \eqref{gs R e R+},
\[
\cE_\a(\mu,\R^+) = \frac12  \cE_\a(2\mu,\R) \le \frac12 E_\a(\sqrt{2} \phi_{\mu,\a}, \R) <  \cE_\a(\mu,\R). \qedhere
\]
\end{proof}

\begin{proof}[Proof of Proposition \ref{prop: ex tip}]
Let $\cG$ be a non-compact metric graph with a terminal edge of length $\ell$. For fixed $\alpha>0$ and $\mu \in (0,\mu_{\R^+})$, we aim to show that $\cE_\a(\mu,\cG)$ is attained if $\ell$ is larger than a constant $\bar \ell$ depending on $\alpha$ and $\mu$. By Lemma \ref{lem: gs R+ R}, we know that $\cE_\a(\mu, \R^+) < \cE_\a(\mu, \R)$. Therefore, by density, there exists $u \in H^1_\mu(\R^+)$ with compact support $[0,M]$ such that $E_\a(u, \R^+) < \cE_\a(\mu, \R)$. If $\ell \ge M$, we can see $u$ as a function in $H^1_\mu(\cG)$ (still denoted by $u$), with support contained on the terminal edge, extended as $0$ elsewhere. For this function $E_\a(u, \cG) < \cE_\a(\mu, \R)$, and hence Theorem \ref{thm: main foc} directly implies the thesis.
\end{proof}

%Lemma \ref{lem: gs R+ R} allowed us to adapt the proof of \cite[Proposition 4.1]{AST3} in the present setting. In \cite{AST3}, the authors deal with a subcritical homogeneous equation; in such case the estimate in Lemma \ref{lem: gs R+ R} holds for every $\mu$, and can be proved simply by scaling. Instead, dealing with a combined nonlinearity with focusing leading term, the estimate does not hold for $\mu \ge \mu_{\R^+}$, since the left hand side is $-\infty$.

%\medskip

We now turn to the proof of Proposition \ref{prop: non ex tip}. We shall use a preliminary result which is a partial counterpart of \cite[Theorem 4.3]{AST3}. Let $\cG$ be a non-compact graph and, for $n \in \N$, let $\mathcal{K}_n$ be a connected compact graph, of total length $|\mathcal{K}_n|$. We denote by $\cG_n$ the graph obtained by attaching $\mathcal{K}_n$ to $\cG$ at some fixed point $\textrm{v} \in \cG$. In this way, both $\cG$ and $\mathcal{K}_n$ can be seen as subgraphs of $\cG_n$. The most natural example is the case when $\mathcal{K}_n$ consists of a single edge, attached to $\cG$ in a vertex.  

\begin{proposition}\label{thm 4.3 ast}
Let $\mu \in (0,\mu_{\R^+})$ and $\alpha>0$. If $\cE_\a(\mu,\cG_n)$ admits a ground states for every large $n$, and $|\mathcal{K}_n| \to 0$, then also $\cE_\a(\mu, \cG)$ admits a ground state. 
\end{proposition}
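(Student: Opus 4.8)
The plan is to turn the ground states $u_n$ of $\cE_\a(\mu,\cG_n)$ into the seed of a minimizing sequence for $\cE_\a(\mu,\cG)$ and then to run the compactness dichotomy of Lemma \ref{lem: compactness} on $\cG$; the whole difficulty will be concentrated in excluding the vanishing alternative.

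First I would establish uniform bounds. Since $\cG\subset\cG_n$ and each $\cG_n$ is non-compact, \eqref{GN cost}--\eqref{crit mass} give $\mu_{\cG_n}\ge\mu_{\R^+}>\mu$ and $C_p(\cG_n)\le C_p(\R^+)$, so estimate \eqref{E da sotto} holds on every $\cG_n$ with constants independent of $n$; combined with $\cE_\a(\mu,\cG_n)<0$ (Lemma \ref{lem: level}, as $\mu<\mu_{\R^+}\le\tilde\mu_{\cG_n}$) this shows that $\{u_n\}$ is bounded in $H^1$ uniformly in $n$. Normalising $u_n\ge 0$ and using $H^1\hookrightarrow L^\infty$, the mass carried by $\mathcal{K}_n$ is at most $\|u_n\|_\infty^2|\mathcal{K}_n|=O(|\mathcal{K}_n|)\to 0$, and likewise the $L^6$- and $L^p$-contributions of $u_n$ on $\mathcal{K}_n$ are $o(1)$: this is the analytic expression of the collapse of $\mathcal{K}_n$ onto $\textrm{v}$.

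Next I would prove level convergence $\cE_\a(\mu,\cG_n)\to\cE_\a(\mu,\cG)$. For the upper bound, given $v\in H^1_\mu(\cG)$ I extend it to $\cG_n$ by the constant $v(\textrm{v})$ on $\mathcal{K}_n$ and renormalise the mass back to $\mu$; the energy changes by $O(|\mathcal{K}_n|)$, so $\limsup_n\cE_\a(\mu,\cG_n)\le\cE_\a(\mu,\cG)$. For the lower bound I restrict $u_n$ to $\cG$, getting $w_n:=u_n|_\cG$ of mass $\mu-\sigma_n$ with $\sigma_n\to 0$, and renormalise to $\tilde w_n:=(\mu/(\mu-\sigma_n))^{1/2}w_n\in H^1_\mu(\cG)$; since the potential part of the energy of $u_n$ on $\mathcal{K}_n$ is $o(1)$ and its kinetic part is nonnegative, the uniform $H^1$-bound yields $E_\a(\tilde w_n,\cG)=E_\a(w_n,\cG)+o(1)\le\cE_\a(\mu,\cG_n)+o(1)$, whence $\cE_\a(\mu,\cG)\le\liminf_n\cE_\a(\mu,\cG_n)$. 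Together the two bounds give $\cE_\a(\mu,\cG_n)\to\cE_\a(\mu,\cG)$, and, being squeezed between $\cE_\a(\mu,\cG)$ and $\cE_\a(\mu,\cG_n)+o(1)$, the sequence $\{\tilde w_n\}$ is minimizing for $\cE_\a(\mu,\cG)$.

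I would then feed $\{\tilde w_n\}$ into Lemma \ref{lem: compactness}: either it converges strongly to a ground state for $\cE_\a(\mu,\cG)$ — which is exactly the thesis — or $\tilde w_n\to 0$ in $L^\infty_{\loc}(\cG)$. The main obstacle is to rule out this vanishing alternative, and here the hypothesis that each $\cE_\a(\mu,\cG_n)$ is \emph{attained} (not merely finite) must enter decisively. From $\cE_\a(\mu,\cG_n)\to\cE_\a(\mu,\cG)<0$ one gets a uniform bound $\|u_n\|_\infty\ge c>0$; if vanishing occurred, the maximum of $u_n$ could lie neither on the collapsing core (there $u_n(\textrm{v})\to 0$ forces $u_n\to 0$ on all of $\mathcal{K}_n$) nor on a fixed compact part of $\cG$, so it would escape to infinity along a half-line. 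Subtracting the infinitesimal maximum over the compact core and rearranging as in the proof of Theorem \ref{thm: main foc} (each level set then having at least two preimages, together with Lemma \ref{lem: inf <=}) would give $E_\a(u_n,\cG_n)\ge\cE_\a(\mu,\R)-o(1)$, hence $\cE_\a(\mu,\cG)=\cE_\a(\mu,\R)$. I expect this equality to be the crux, and it is to be contradicted by exploiting that $u_n$ is a true minimizer: an escaping peak sees only the full line $\R$ and draws no benefit from the structure near $\textrm{v}$, whereas for large $n$ relocating the profile so as to use that structure would strictly lower the energy, contradicting the minimality of $u_n$ on $\cG_n$; equivalently, by the rigidity of the critical level in \cite{AST2}, the value $\cE_\a(\mu,\R)$ cannot be attained on a graph that is not isometric to $\R$ or to a tower of bubbles, so — unless $\cG$ is one of these, in which case a ground state exists trivially — vanishing is incompatible with the existence of minimizers on all $\cG_n$. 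This is the precise counterpart, in the combined-nonlinearity setting, of the argument of \cite[Theorem 4.3]{AST3}. Once vanishing is excluded the non-vanishing branch of Lemma \ref{lem: compactness} produces the ground state; alternatively, having secured $\cE_\a(\mu,\cG)<\cE_\a(\mu,\R)$, one concludes immediately from Theorem \ref{thm: main foc}.
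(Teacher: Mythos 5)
Your setup --- uniform $H^1$ bounds independent of $n$, restriction of $u_n$ to $\cG$ and renormalization, the resulting minimizing sequence for $\cE_\a(\mu,\cG)$, and the reduction to excluding the vanishing alternative of Lemma \ref{lem: compactness} --- coincides with the paper's strategy and is correct as far as it goes. The gap is precisely in the step you yourself single out as the crux: excluding vanishing. Neither of your two proposed mechanisms works. The appeal to the ``rigidity of the critical level'' in \cite{AST2} is a non sequitur: that result concerns attainment of the \emph{homogeneous} critical level at mass $\mu_{\R}$, not of $\cE_\a(\mu,\R)$, and in any case non-attainment on $\cG$ is exactly the contradiction hypothesis, while attainment occurs on the \emph{different} graphs $\cG_n$ at levels $\cE_\a(\mu,\cG_n)\le\cE_\a(\mu,\R)$ --- there is no conflict at this soft level. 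Likewise, the claim that ``relocating the escaping peak to use the structure near $\textrm{v}$ would strictly lower the energy'' is unsubstantiated: if $\cE_\a(\mu,\cG)=\cE_\a(\mu,\R)$ is not attained, $\cG$ offers no energetic advantage to exploit, and nothing prevents a genuine minimizer $u_n$ of $\cE_\a(\mu,\cG_n)$ from placing almost all of its mass far out along a half-line and only an $o(1)$ amount on $\mathcal{K}_n$.

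The missing ingredient is quantitative. Writing $\sigma_n=\int_\cG u_n^2<\mu$ and $v_n=(\mu/\sigma_n)^{1/2}u_n|_\cG$, non-attainment on $\cG$ gives the \emph{strict} inequality $\cE_\a(\mu,\R)<E_\a(v_n,\cG)$, while splitting the energy and using $\alpha>0$ gives $E_\a(v_n,\cG)\le\frac{\mu}{\sigma_n}\bigl(\cE_\a(\mu,\R)+\frac16\int_{\mathcal{K}_n}|u_n|^6+\frac{\alpha}{p}\int_{\mathcal{K}_n}|u_n|^p\bigr)$; rearranging, $(\mu-\sigma_n)\bigl(-\cE_\a(\mu,\R)\bigr)<\mu\bigl(\frac16\int_{\mathcal{K}_n}|u_n|^6+\frac{\alpha}{p}\int_{\mathcal{K}_n}|u_n|^p\bigr)$. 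Your observation that the right-hand side is $o(1)$ is useless here, because the left-hand side is also $o(1)$. What closes the argument is that $M_n:=\|u_n\|_{L^\infty(\mathcal{K}_n)}\to 0$ (from $u_n(\textrm{v})\to 0$ in the vanishing alternative, $|\mathcal{K}_n|\to 0$ and the uniform $H^1$ bound), so that $\int_{\mathcal{K}_n}|u_n|^q\le M_n^{q-2}\int_{\mathcal{K}_n}|u_n|^2=M_n^{q-2}(\mu-\sigma_n)=o(\mu-\sigma_n)$: the error is small \emph{relative to} $\mu-\sigma_n$, the positive factor $\mu-\sigma_n$ cancels from both sides, and one obtains $-\cE_\a(\mu,\R)\le 0$, the desired contradiction. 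Without this $o(\mu-\sigma_n)$ refinement of your $o(1)$ bound, the proof does not close.
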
 

Once that this result is proved, Proposition \ref{prop: non ex tip} follows easily.

\begin{proof}[Proof of Proposition \ref{prop: non ex tip}]
Suppose by contradiction that $\cE_\a(\mu, \cG_{\ell})$ admits a ground state for a sequence $\ell \to 0^+$. Then, by Proposition \ref{thm 4.3 ast} also $\cE_\alpha(\mu, \cG_0)$ would have a ground state, in contradiction with the assumption.
\end{proof}

In proving Proposition \ref{thm 4.3 ast}, we adapt the argument from \cite{AST3}, and to this purpose we shall need some uniform (with respect to $\cG$) estimates for minimizing sequences. In \cite{AST3}, similar estimates follow by exploiting suitable scaling; here we cannot adapt this strategy and hence we argue in a different way.

\begin{lemma}\label{lem: unif est}
Let $\cG$ be a non-compact graph, $\mu \in (0,\mu_{\R^+})$ and $\alpha>0$. Then there exists $\bar C>0$ depending only on $\alpha$ and $\mu$ (in particular, $\bar C$ is independent of $\cG$) such that
\[
\quad \lp{u}{p} + \lp{u}{6} + \lp{u}{\infty} + \lp{u'}{2} \le 4\bar C
\]  
for every $u \in H^1_\mu(\cG)$ such that $E_\a(\mu, \cG) \le \frac12 \cE_\a(\mu,\R)$.
\end{lemma}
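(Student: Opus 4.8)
The plan is to deduce all four bounds from a single coercivity estimate for $\|u'\|_{L^2(\cG)}$, the essential point being that the Gagliardo--Nirenberg constants and the critical mass can be controlled \emph{uniformly in $\cG$}. The two facts that make this possible are already recorded in the excerpt: the inequality $C_q(\cG) \le C_q(\R^+)$ from \eqref{GN cost}, and the bound $\mu_{\R^+} \le \mu_\cG$ from \eqref{crit mass}. Both hold for every non-compact graph, so $C_q(\R^+)$ and $\mu_{\R^+}$ furnish $\cG$-independent substitutes for the $\cG$-dependent quantities $C_q(\cG)$ and $\mu_\cG$.

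First I would bound $\|u'\|_{L^2(\cG)}$. Starting from \eqref{E da sotto}, and using $\mu_\cG \ge \mu_{\R^+}$ together with $C_p(\cG) \le C_p(\R^+)$, one gets
\[
E_\a(u, \cG) \ge \frac12 A\, \|u'\|_{L^2(\cG)}^2 - B\, \|u'\|_{L^2(\cG)}^{\frac{p-2}{2}},
\]
where $A := 1 - (\mu/\mu_{\R^+})^2$ and $B := \frac{\alpha}{p}C_p(\R^+)\mu^{\frac{p+2}{4}}$ depend only on $\alpha$, $\mu$, $p$. Crucially $A>0$, since the hypothesis $\mu < \mu_{\R^+}$ (and not merely $\mu\le\mu_\cG$) forces $(\mu/\mu_{\R^+})^2 < 1$. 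On the other hand $\mu < \mu_{\R^+} < \mu_\R$, so by Theorem \ref{thm: R} the number $\frac12\cE_\a(\mu,\R)$ is finite and negative, depending only on $\alpha$ and $\mu$. Combining this with the assumption $E_\a(u,\cG) \le \frac12\cE_\a(\mu,\R)$ yields $\frac12 A t^2 - B t^{\frac{p-2}{2}} \le \frac12\cE_\a(\mu,\R)$ with $t := \|u'\|_{L^2(\cG)}$. Since $p < 6$ we have $\tfrac{p-2}{2} < 2$, so the left-hand side tends to $+\infty$ as $t \to +\infty$; hence $t$ lies in a bounded set, i.e. $\|u'\|_{L^2(\cG)} \le R$ for some $R = R(\alpha,\mu)$.

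With $\|u'\|_{L^2(\cG)} \le R$ in hand, the $L^p$ and $L^6$ bounds follow at once from the Gagliardo--Nirenberg inequality with the uniform constants, giving $\|u\|_{L^p(\cG)}^p \le C_p(\R^+)\mu^{\frac{p+2}{4}} R^{\frac{p-2}{2}}$ and $\|u\|_{L^6(\cG)}^6 \le C_6(\R^+)\mu^2 R^2$. For the $L^\infty$ bound I would argue directly, exploiting non-compactness: letting $x_0$ realize $\|u\|_{L^\infty(\cG)}$ and choosing a simple path $\gamma$ running from $x_0$ to infinity along an unbounded edge (along which $u\to 0$, as $u\in H^1$), the fundamental theorem of calculus on $u^2$ and Cauchy--Schwarz give
\[
\|u\|_{L^\infty(\cG)}^2 = |u(x_0)|^2 \le 2\int_\gamma |u|\,|u'| \le 2\|u\|_{L^2(\cG)}\|u'\|_{L^2(\cG)} \le 2\mu^{1/2} R.
\]
Choosing $\bar C$ to be the largest of the four resulting bounds then yields the stated estimate.

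The step requiring the most care is guaranteeing \emph{genuine} uniformity in $\cG$ at each stage, since the whole argument rests on systematically replacing $\mu_\cG$ and $C_q(\cG)$ by $\mu_{\R^+}$ and $C_q(\R^+)$, and on the path-to-infinity computation for the supremum norm, which is available only because every non-compact graph contains an unbounded edge on which $H^1$-functions decay. It is worth underlining that the strict positivity of $A$, and hence the coercivity, is exactly the place where the hypothesis $\mu < \mu_{\R^+}$ enters; for $\mu$ closer to $\mu_\cG$ this coercivity could degenerate, which is why the estimate is stated only on $(0,\mu_{\R^+})$.
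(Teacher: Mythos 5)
Your argument is correct and follows essentially the same route as the paper: replace $\mu_\cG$ and $C_p(\cG)$ by the uniform bounds $\mu_{\R^+}$ and $C_p(\R^+)$ via \eqref{GN cost}--\eqref{crit mass}, deduce coercivity of the kinetic term from $p<6$ and the finiteness of $\cE_\a(\mu,\R)$, and then propagate the gradient bound to the $L^p$, $L^6$ and $L^\infty$ norms. Your derivation of the $L^\infty$ bound via the fundamental theorem of calculus along a path to infinity is just the standard estimate $\lp{u}{\infty}^2\le 2\lp{u}{2}\lp{u'}{2}$ that the paper invokes directly.
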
 

\begin{proof}
Let $u \in H^1_\mu(\cG)$ be such that $E_\a(\mu, \cG) \le \frac12 \cE_\a(\mu,\R)$. From the Gagliardo-Nirenberg inequality, we have that
\[
\frac12 \left( 1 -\left(\frac{\mu}{\mu_{\cG}}\right)^2 \right) \lp{u'}{2}^2- \frac{\alpha C_p(\cG)}{p} \mu^{\frac{p+2}{4}}\lp{u'}{2}^{\frac{p-2}2} \le  \frac12 \cE_\a(\mu,\R).
\]
Recalling \eqref{GN cost} and \eqref{crit mass}, this implies that
\[
\frac12 \left( 1 -\left(\frac{\mu}{\mu_{\R^+}}\right)^2 \right) \lp{u'}{2}^2 \le \frac12 \cE_\a(\mu,\R) + \frac{\alpha C_p(\R^+)}{p} \mu^{\frac{p+2}{4}}\lp{u'}{2}^{\frac{p-2}2}.
\]
Therefore there exists $\bar C>0$ depending only on $\alpha$ and $\mu$ such that $\lp{u'}{2} \le \bar C$. By using again the Gagliardo-Nirenberg inequality and \eqref{GN cost} we directly infer that (if necessary replacing $\bar C$ with a larger quantity) also $\lp{u}{q} \le \bar C$, for both $q=p$ and $q=6$. Finally, the fact that $\lp{u}{\infty} \le \bar C$ is a consequence of the estimate $\lp{u}{\infty} \le 2 \mu \lp{u'}{2}$.
\end{proof}

\begin{proof}[Proof of Proposition \ref{thm 4.3 ast}]
With Lemma \ref{lem: unif est} in our hands, we can extend the proof of \cite[Theorem 4.3]{AST3} with minor changes. We only report a brief sketch for the sake of completeness. Suppose by contradiction that $\cE_\alpha(\mu,\cG_n)$ admits a ground state $u_n$ for every $n$, but $\cE_\alpha(\mu, \cG)$ does not. Then, by \eqref{dis gs} (and recalling that $\cE_\a(\mu,\R)<0$), we know that
\[
E_\alpha(u_n, \cG_n) = \cE_{\alpha}(\mu, \cG_n) \le \cE_\a(\mu,\R) < \frac12 \cE_\a(\mu,\R).
\]
Also, by Theorem \ref{thm: main foc} and \eqref{dis gs}, $\cE_\a(\mu,\cG) = \cE_\a(\mu, \R)$. Let now $\sigma_n := \int_{\cG} u_n^2$, and define $v_n := (\mu/\sigma_n)^{1/2} u_n|_{\cG}$ (where $u_n|_{\cG}$ denotes the restriction of $u_n$ on $\cG$). If $\sigma_n=\mu$, then $u_n$ can be regarded as a function in $H^1_\mu(\cG)$ at level $E_\alpha(u_n, \cG) = \cE_\alpha(\mu, \cG_n) \le \cE_\alpha(\mu, \cG)$, i.e. $u_n$ would be a ground state on $\cG$, a contradiction. Then $\sigma_n<\mu$, $v_n \in H^1_\mu(\cG)$, and arguing as in \cite[Eq. (33)]{AST3} we deduce that
\beq\label{33ast3}
 \cE_\a(\mu, \R) < E_\alpha(v_n, \cG) \le \frac{\mu}{\sigma_n} \left( \cE_\a(\mu,\R) + \frac16 \int_{\mathcal{K}_n} |u_n|^6 + \frac{\alpha}p \int_{\mathcal{K}_n} |u_n|^p\right).
\eeq
Now, by Lemma \ref{lem: unif est} and the fact that $|\mathcal{K}_n| \to 0$, we obtain $\int_{\mathcal{K}_n} |u_n|^q \to 0$ for $q=2$, $q=p$, and $q=6$; also, $\sigma_n \to \mu$, and hence, by \eqref{33ast3}, $v_n$ is a minimizing sequence for $\cE_\a(\mu, \cG) = \cE_\a(\mu, \R)$. Since $\cE_\a(\mu, \cG)$ has no ground state, alternative ($i$) in Lemma \ref{lem: compactness} holds for $\{v_n\}$, whence it follows that $u_n \to 0$ in $L^\infty_{\loc}(\cG)$. As in \cite[Eq. (34)]{AST3}, this implies that $M_n := \|u_n\|_{L^{\infty}(\mathcal{K}_n)} \to 0$ as well, and coming back to \eqref{33ast3} we find that
\[
 (\mu-\sigma_n)(-\cE_\a(\mu, \R)) < \mu \left( \frac{M_n^4}{6} + \frac{\alpha M_n^{p-2}}{p}\right) (\mu-\sigma_n).
\]
Since $\sigma_n<\mu$ and $M_n \to 0$, the estimate yields $\cE_\a(\mu, \R) \ge 0$, which is a contradiction.
\end{proof}

\section{Proof of the main result in the defocusing cases $\alpha<0$}\label{sec: def}

In proving Theorem \ref{thm: main def}, we shall take advantage of what we observed in Remark \ref{rmk: main def}. In particular, we know that alternative ($i$) in the theorem applies to graphs of type (1) - with a terminal edge, while alternative ($ii$) concerns graphs of type (3) and (4) with $\mu_{\cG}<\mu_{\R}$.   

\begin{proof}[Proof of Theorem \ref{thm: main def} - ($i$)]
Let $\mu > \mu_{\R^+}$, and let $\ell$ denote the length of the terminal edge. Then there exists $u \in H^{1}_\mu(\R^+)$ with compact support contained in $[0,M]$ (for some positive $M$), such that $E_0(u,\R^+) <0$. The function $u_\lambda(x) = \sqrt{\lambda} u(\lambda x)$ is in $H^{1}_\mu(\R^+)$, and has compact support in $[0, M/\lambda]$. Thus, for every $\lambda$ sufficiently large, we can consider $u_\lambda$ as an element of $H^1_\mu(\cG)$, with support entirely contained on the terminal edge (and extended to $0$ outside). The energy of $u_\lambda$ is
\[
E_{\alpha}(u_\lambda, \cG) = E_0(u_\lambda,\cG) - \frac{\alpha}{p} \int_{\cG} |u_\lambda|^p = \lambda^2 E_0(u,\R^+) - \frac{\alpha}{p} \lambda^{\frac{p-2}{2}} \int_0^{+\infty} |u|^p,
\]
which tends to $-\infty$ as $\lambda \to +\infty$, since $p<6$.
\end{proof}

Now we focus on alternative ($ii$) in Theorem \ref{thm: main def}, supposing that $\mu_{\cG}<\tilde \mu_{\cG}$. This assumption is fulfilled by graphs of type (3) and some graphs of type (4). We start with a preliminary lemma.

\begin{lemma}\label{lem: main def}
Let $\cG$ be a non-compact metric graph without terminal edges, $p \in (2,6)$, $\alpha<0$. Suppose that $\mu_{\cG}<\mu_{\R}$, and let $\mu \in (\mu_{\cG}, \mu_{\R}]$. Then there exists $\bar \alpha<0$ depending on $p$, $\mu$ and $\cG$ (possibly equal to $-\infty$) such that
\[
\cE_{\alpha}(\mu, \cG) < 0 \quad \iff \quad  \alpha \in ( \bar \alpha,0), \quad \text{and} \quad \cE_{\alpha}(\mu, \cG) =0 \quad \iff \quad  \alpha \le \bar \alpha.
\]
Moreover, $\bar \alpha>-\infty$ if $\mu<\mu_{\R}$.
\end{lemma}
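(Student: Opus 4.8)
The plan is to first settle the qualitative shape of the map $\alpha \mapsto \cE_\alpha(\mu,\cG)$ on $(-\infty,0)$, and then deal with the quantitative ``moreover'' claim separately. First I would record three soft properties. For every fixed $u \in H^1_\mu(\cG)$ the map $\alpha \mapsto E_\alpha(u,\cG) = E_0(u,\cG) - \tfrac{\alpha}{p}\lp{u}{p}^p$ is affine in $\alpha$, and $\lp{u}{p}^p \ge 0$; taking the infimum over $u$, I conclude that $\cE_\alpha(\mu,\cG)$ is \emph{concave} and \emph{non-increasing} in $\alpha$. Concavity yields continuity on the open half-line $(-\infty,0)$. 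Next, by \eqref{dis gs} and Theorem \ref{thm: R} we have $\cE_\alpha(\mu,\cG) \le \cE_\alpha(\mu,\R) = 0$; conversely, since $\cG$ has no terminal point and $\mu \le \mu_\R$, the modified Gagliardo--Nirenberg inequality (Lemma \ref{lem: mod GN}) combined with $\alpha<0$ gives
\[
E_\alpha(u,\cG) \ge \tfrac12\Big(1-\big(\tfrac{\mu-\theta_u}{\mu_\R}\big)^2\Big)\lp{u'}{2}^2 - \tfrac{C_\cG}{6}\theta_u^{1/2} \ge -\tfrac{C_\cG}{6}\mu^{1/2},
\]
so that $\cE_\alpha(\mu,\cG)$ is finite, and the concave function is genuinely continuous on $(-\infty,0)$.

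With these in hand, I would define $\bar\alpha := \inf\{\alpha<0 : \cE_\alpha(\mu,\cG)<0\}$. Since $\mu > \mu_\cG$ the homogeneous theory gives $\cE_0(\mu,\cG)<0$, so there is $w \in H^1_\mu(\cG)$ with $E_0(w,\cG)<0$; then $E_\alpha(w,\cG) = E_0(w,\cG) + \tfrac{|\alpha|}{p}\lp{w}{p}^p < 0$ for $|\alpha|$ small, hence $\cE_\alpha(\mu,\cG)<0$ near $0$ and $\bar\alpha<0$. Monotonicity then forces $\cE_\alpha(\mu,\cG)<0$ on all of $(\bar\alpha,0)$, while on $(-\infty,\bar\alpha)$ one has $\cE_\alpha(\mu,\cG)\ge 0$ together with $\cE_\alpha(\mu,\cG)\le 0$, i.e. $\cE_\alpha(\mu,\cG)=0$. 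Finally, if $\bar\alpha>-\infty$, continuity upgrades this to $\cE_{\bar\alpha}(\mu,\cG)=0$, which turns the two monotone statements into the stated equivalences.

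It then remains to prove that, when $\mu<\mu_\R$, the level vanishes for all sufficiently negative $\alpha$, i.e. $\bar\alpha>-\infty$. Set $\delta := 1-(\mu/\mu_\R)^2 > 0$. The core idea is to produce two competing estimates on $\lp{u'}{2}$ valid for \emph{any} $u$ with $E_\alpha(u,\cG)<0$. On one hand, bounding $\theta_u \le \mu$ in Lemma \ref{lem: mod GN} gives $E_\alpha(u,\cG) \ge \tfrac{\delta}{2}\lp{u'}{2}^2 - \tfrac{C_\cG}{6}\mu^{1/2}$, so $E_\alpha(u,\cG)<0$ forces $\lp{u'}{2}^2 < \tfrac{C_\cG\mu^{1/2}}{3\delta} =: B$, a bound \emph{independent of $\alpha$} (this is exactly where $\mu<\mu_\R$ is used). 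On the other hand, $E_\alpha(u,\cG)<0$ gives $\tfrac16\lp{u}{6}^6 > \tfrac{|\alpha|}{p}\lp{u}{p}^p$; combining with the elementary interpolation $\lp{u}{6}^6 \le \lp{u}{\infty}^{6-p}\lp{u}{p}^p$ and dividing by $\lp{u}{p}^p>0$ yields $\lp{u}{\infty} > (6|\alpha|/p)^{1/(6-p)}$, whence the graph bound $\lp{u}{\infty}^2 \le 2\mu^{1/2}\lp{u'}{2}$ gives
\[
\lp{u'}{2} \ge \tfrac{1}{2\mu^{1/2}}\,(6|\alpha|/p)^{2/(6-p)} \longrightarrow +\infty \qquad \text{as } \alpha\to-\infty.
\]
For $|\alpha|$ large the two bounds are incompatible, so no $u$ with $E_\alpha(u,\cG)<0$ exists; thus $\cE_\alpha(\mu,\cG)\ge 0$, hence $=0$, and $\bar\alpha>-\infty$.

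The hard part will be precisely this last step. The difficulty is that for $\mu>\mu_\cG$ one cannot bound the energy below by a positive multiple of $\lp{u'}{2}^2$ using the plain constant $C_6(\cG)$, so the modified inequality (and the constant $\mu_\R$ in it) is essential, and it is only available for graphs without terminal points and only useful here because $\mu<\mu_\R$ makes $\delta>0$. The real leverage is to combine this $\alpha$-independent kinetic ceiling with the observation that any negative-energy competitor must become arbitrarily \emph{tall}, and therefore arbitrarily \emph{steep}, as the defocusing strength $|\alpha|$ increases — a tension that cannot persist. At the threshold $\mu=\mu_\R$ one has $\delta=0$, the kinetic ceiling disappears, and the argument breaks down, consistent with the lemma leaving the case $\bar\alpha=-\infty$ open there.
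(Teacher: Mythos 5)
Your proof is correct and follows essentially the same route as the paper's: the same characterization of $\bar \alpha$ (your $\inf$ of the negative-energy set coincides, by monotonicity, with the paper's $\sup$ of the zero set), the same use of monotonicity and continuity in $\alpha$ for the dichotomy, and, for the bound $\bar \alpha>-\infty$ when $\mu<\mu_{\R}$, the same three ingredients --- the $\alpha$-independent kinetic bound from the modified Gagliardo--Nirenberg inequality, the interpolation $\lp{u}{6}^6 \le \lp{u}{\infty}^{6-p}\lp{u}{p}^p$, and the estimate $\lp{u}{\infty}^2 \le 2\lp{u}{2}\lp{u'}{2}$ --- merely composed in the contrapositive order (a lower bound on $\lp{u'}{2}$ growing with $|\alpha|$ versus the paper's direct upper bound on $|\alpha|$). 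No changes are needed.
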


\begin{proof}
Since $\mu \le \mu_{\R}$, by \eqref{dis gs} and Theorem \ref{thm: R} we have that $\cE_\a(\mu, \cG) \le 0$, for every $\alpha<0$. Moreover, by monotonicity of the energy with respect to $\alpha$, we have that $\alpha \mapsto \cE_\a(\mu, \cG)$ is monotone non-increasing. Let us define
\beq\label{def alpha bar}
\bar\alpha:= \sup\{\alpha<0: \ \cE_\a(\mu, \cG) =0\}.
\eeq
We claim that $\bar \alpha <0$. This follows from the continuity of $E_\a$ with respect to $\alpha$: if $\alpha = 0$, since $\mu > \mu_{\cG}$, there exists $u \in H^1_\mu(\cG)$ such that $E_0(u,\cG) <0$ \cite[Proposition 2.4]{AST2}. But then $E_\a(u,\cG) <0$ for small $|\alpha|$, which implies that $\cE_\a(\mu, \cG)<0$ for any such $\alpha$. Now, by definition of $\bar \alpha$ and monotonicity of $\cE_\a(\mu, \cG)$ with respect to $\alpha$, we have that $\cE_\a(\mu, \cG) <0$ for $\bar \alpha<\alpha<0$, and $\cE_\a(\mu, \cG) =0$ for $\alpha<\bar \alpha$. Furthermore, again by continuity of the energy with respect to $\alpha$, it is also not difficult to check that, if $\bar \alpha>-\infty$, then necessarily $\cE_{\bar \alpha}(\mu, \cG) =0$. 

It remains to show that $\bar \alpha >-\infty$ for every $\mu<\mu_{\R}$. To this end, we fix $\mu<\mu_{\R}$ and suppose that there exists $u \in H^1_\mu(\cG)$ such that $E_\a(\mu, \cG)  < 0$. From this, we shall derive an upper bound on $|\alpha|$. By the modified Gagliardo-Nirenberg inequality in Lemma \ref{lem: mod GN}, we deduce that
\[
\begin{split}
\frac{1}{2} \left(1-\left(\frac{\mu}{\mu_{\R}}\right)^2\right)  & \lp{u'}{2}^2- C_{\cG} \mu^\frac12  \\
& \le \frac{1}{2} \left(1-\left(\frac{\mu}{\mu_{\R}}\right)^2\right) \lp{u'}{2}^2- C_{\cG} \mu^\frac12 + \frac{|\alpha|}{p} \lp{u}{p}^p  \le E_\a(u,\cG) < 0,
\end{split}
\]
with $C_{\cG}>0$ depending only on $\cG$. Since $\mu<\mu_{\R}$, it follows that $\lp{u'}{2} \le C(\mu, \cG)$\footnote{Here and in the rest of the proof, $C(\mu, \cG)$ and $C(p,\mu, \cG)$ denote positive constants depending on $\mu$ and $\cG$ or on $p$, $\mu$ and $\cG$, whose precise value may change from line to line.}. But then $\lp{u}{\infty} \le 2 \lp{u}{2} \lp{u'}{2} \le C(\mu, \cG)$, and in turn
\[
\lp{u}{6}^{6} = \int_{\cG} |u|^p |u|^{6-p} \le \lp{u}{\infty}^{6-p} \lp{u}{p}^p \le C(p,\mu, \cG) \lp{u}{p}^p.
\]
Coming back to the fact that $E_\alpha(u, \cG) <0$, the above estimate yields
\[
\frac{|\alpha|}p\lp{u}{p}^p < \frac12\lp{u'}{2}^2 + \frac{|\alpha|}p\lp{u}{p}^p < \frac16\lp{u}{6}^6 \le C(p,\mu, \cG) \lp{u}{p}^p,
\]
whence necessarily
\[
\left( \frac{|\alpha|}{p} - C(p,\mu, \cG) \right) \lp{u}{p}^p <0.
\]
This is possible only if $|\alpha|< C(p,\mu, \cG)$. In other words, if $|\alpha|$ is greater than a constant depending on $p$, $\mu$ and $\cG$, then $\cE_\alpha(\mu, \cG) = 0$, so that $\bar \alpha>-\infty$, and the proof is complete. 
\end{proof}

The next lemmas will allow us to prove the existence of ground states when $\alpha \in (\bar \alpha,0)$.

\begin{lemma}\label{lem: conv se non 0}
Let $\cG$ be a non-compact metric graph without terminal edges, $p \in (2,6)$, $\alpha<0$. Suppose that $\mu_{\cG}<\mu_{\R}$, and let $\mu \in (\mu_{\cG}, \mu_{\R}]$. Let $\{u_n\}$ be a minimizing sequence of non-negative functions for $\cE_\a(\mu, \cG)$, and suppose that $u_n \weak u$ weakly in $H^1(\cG)$. \\
If $u \not \equiv 0$, then $u \in H^1_\mu(\cG)$ and is a ground state.
\end{lemma}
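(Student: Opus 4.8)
The plan is to show that the weak limit carries the whole mass, $\lp{u}{2}^2=\mu$, after which strong convergence and the conclusion follow routinely. Write $m:=\lp{u}{2}^2\in(0,\mu]$ (the upper bound by weak lower semicontinuity), set $w_n:=u_n-u\weak 0$, and recall $\lp{w_n}{2}^2\to\mu-m$. First I would record the energy splitting
\[
\cE_\a(\mu,\cG)=\lim_n E_\a(u_n,\cG)=E_\a(u,\cG)+\lim_n E_\a(w_n,\cG)=:E_\a(u,\cG)+L,
\]
which comes from $w_n\weak 0$ in $H^1$ (so the Dirichlet cross term vanishes) together with the Brezis--Lieb lemma applied to the $L^6$ and $L^p$ terms.

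The decisive step is to prove $L\ge 0$, and this is where the criticality bites and the hypothesis $m>0$ is used crucially, since it forces $\lp{w_n}{2}^2\to\mu-m<\mu_\R$. As $\alpha<0$ the defocusing term is non-negative, so $E_\a(w_n,\cG)\ge E_0(w_n,\cG)$ and it suffices to show $\liminf_n E_0(w_n,\cG)\ge0$. I would split $\cG$ into a compact set $B$ (the compact core together with the unit-length initial segment of each of the finitely many half-lines) and the remaining far ends of the half-lines. On $B$ the embedding $H^1\embed L^\infty$ is compact, so $w_n\to0$ uniformly on $B$; hence the $L^6$ mass of $w_n$ on $B$ tends to $0$, giving $E_0(w_n,B)\ge-o(1)$, and moreover the values $\delta_n$ of $w_n$ at the points separating $B$ from the far ends tend to $0$. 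On each far end I would transplant the restriction of $w_n$ to a function $V_n\in H^1(\R)$ by gluing a short ramp from the small boundary value $\delta_n$ down to $0$; this alters mass and energy only by $O(\delta_n^2)=o(1)$ and keeps $\|V_n\|_{L^2(\R)}^2\le(\mu-m)+o(1)<\mu_\R$. Then \eqref{gs 0 R} gives $E_0(V_n,\R)\ge\cE_0(\|V_n\|_{L^2(\R)}^2,\R)=0$, whence $E_0(w_n,\text{far end})\ge-o(1)$. Summing over the finitely many ends yields $\liminf_n E_0(w_n,\cG)\ge0$. This transplantation is the heart of the argument: it is the concrete reason why mass escaping to infinity cannot lower the critical energy, and it is the counterpart, at the level of the minimizing sequence, of the modified Gagliardo--Nirenberg inequality of Lemma \ref{lem: mod GN}. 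Keeping the bookkeeping so that the leading coefficient stays positive and the errors vanish is where the main care is needed, and I expect it to be the genuine obstacle.

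Granted $L\ge0$, the splitting gives $E_\a(u,\cG)\le\cE_\a(\mu,\cG)\le\cE_\a(\mu,\R)=0$ by \eqref{dis gs} and Theorem \ref{thm: R}. Suppose for contradiction that $m<\mu$. Since $u\not\equiv0$ and $E_\a(u,\cG)\le0$, a direct computation gives
\[
\frac{d}{d\theta}\Big|_{\theta=1}E_\a(\sqrt\theta\,u,\cG)=\frac12\Big(\lp{u'}{2}^2-\lp{u}{6}^6+|\alpha|\lp{u}{p}^p\Big)\le-\lp{u'}{2}^2-\frac{|\alpha|}{2}\Big(\tfrac6p-1\Big)\lp{u}{p}^p<0,
\]
where the middle inequality uses $E_\a(u,\cG)\le0$ and $p<6$. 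Hence for $\theta>1$ close to $1$ with $\theta m\le\mu$ one has $E_\a(\sqrt\theta\,u,\cG)<E_\a(u,\cG)\le\cE_\a(\mu,\cG)$, so $\cE_\a(\theta m,\cG)<\cE_\a(\mu,\cG)$. This contradicts the monotonicity of $\mu\mapsto\cE_\a(\mu,\cG)$ (non-increasing on $(0,\mu_\R]$, as one sees by inserting at infinity a spread-out bump of vanishing energy), because $\theta m\le\mu$. Therefore $m=\mu$. Then $u_n\to u$ strongly in $L^2(\cG)$, hence, being bounded in $H^1$, strongly in $L^p(\cG)$ and $L^6(\cG)$ by interpolation, and weak lower semicontinuity of the Dirichlet term yields $E_\a(u,\cG)\le\liminf_n E_\a(u_n,\cG)=\cE_\a(\mu,\cG)$; thus $u\in H^1_\mu(\cG)$ is a ground state.
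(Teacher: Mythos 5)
Your proof is correct, and its skeleton --- Brezis--Lieb splitting, a lower bound for the energy of the vanishing part $w_n=u_n-u$, and a scaling argument to exclude $0<m<\mu$ --- is the same as the paper's; the two places where you diverge are worth recording. First, for the key estimate $\liminf_n E_0(w_n,\cG)\ge 0$ the paper simply invokes \cite[Lemma 4.2]{AST2}, which gives $E_0(w_n,\cG)\ge \frac12\bigl(1-\lp{w_n}{2}^4/\mu_\R^2\bigr)\lp{w_n'}{2}^2+o(1)$ whenever $w_n\weak 0$, and then uses $\lp{w_n}{2}^2\to\mu-m\le\mu_\R$; your compact-core decomposition plus transplantation onto $\R$ is in effect a self-contained re-derivation of that lemma (and of the mechanism behind Lemma \ref{lem: mod GN}), at the cost of more bookkeeping but with the benefit of isolating exactly where $m>0$ enters (it guarantees $\mu-m<\mu_\R$ strictly, which absorbs the $o(1)$ mass added by the ramps). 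Second, to exclude $m<\mu$ the paper rescales in one shot to $\sqrt{\mu/m}\,u\in H^1_\mu(\cG)$ and observes $E_\a(\sqrt{\mu/m}\,u,\cG)<(\mu/m)^3E_\a(u,\cG)\le\cE_\a(\mu,\cG)$ (using $E_\a(u,\cG)\le\cE_\a(\mu,\cG)\le 0$ and $\lp{u}{p}>0$), contradicting the definition of the infimum with no further input; your infinitesimal version via $\frac{d}{d\theta}E_\a(\sqrt\theta u,\cG)\big|_{\theta=1}<0$ is also valid, but it forces you to import the monotonicity of $\nu\mapsto\cE_\a(\nu,\cG)$, which you justify only parenthetically --- to make that rigorous you should first truncate the almost-minimizer to compact support (as in Remark \ref{rmk: density}) before gluing the disjointly supported, spread-out bump of vanishing energy. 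Both routes work; the paper's final step is the more economical of the two.
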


\begin{proof}
Up to a subsequence, we can suppose that $u_n \to u$ in $L^\infty_{\loc}(\cG)$ and almost everywhere. Thus
\[
E_\a(u_n,\cG) = E_\alpha(u, \cG) + E_\alpha(u_n-u, \cG) + o(1) \ge E_\alpha(u, \cG) + E_0(u_n-u, \cG) + o(1)
\]
as $n \to \infty$, where we used the Brezis-Lieb lemma and the fact that $\alpha<0$. Now, $u_n-u \weak 0$ weakly in $H^1(\cG)$, and hence by \cite[Lemma 4.2]{AST2} we infer that
\[
E_0(u_n-u, \cG) \ge \frac12 \left( 1- \frac{\|u_n-u\|_{L^2(\cG)}^4}{\mu_\R^2}\right) \|u_n'-u'\|_{L^2(\cG)} + o(1) \ge o(1)
\]
as $n \to \infty$, since $\lp{u_n-u}{2}^2 = \mu - \lp{u}{2}^2 +o(1) \ge o(1)$, by weak convergence. This implies that $E_\a(u_n,\cG) \ge  E_\alpha(u, \cG) + o(1)$, so that 
\[
E_\a(u, \cG) \le \cE_\a(\mu, \cG) \le 0.
\]
Notice that by weak convergence $m:= \lp{u}{2}^2\le \mu$, and $m >0$ by assumption. If $m < \mu$, then
\[
\begin{split}
E_\a\left( \sqrt{\frac{\mu}{m}} u, \cG\right) &= \frac{\mu}{m} \frac12 \int_{\cG} |u'|^2 -  \left(\frac{\mu}{m}\right)^3 \frac16 \int_{\cG} |u|^6 - \left( \frac{\mu}{m}\right)^\frac{p}{2} \frac{\alpha}p \int_{\cG} |u|^p \\
& < \left(\frac{\mu}{m}\right)^3 E_\a(u, \cG) \le \cE_\a(\mu, \cG),
\end{split}
\]
since $\mu/m>1$, $\lp{u}{p}>0$, and $\cE_\a(\mu, \cG)\le 0$. This is in contradiction with the definition of $\cE_\a(\mu, \cG)$, and hence necessarily $m=\mu$, that is $u \in H^1_\mu(\cG)$ and is the desired ground state (one can also deduce that $u_n \to u$ strongly in $H^1(\cG)$: since $m=\mu$, $u_n \to u$ strongly in $L^2(\cG)$; this and the boundedness of $\{u_n\}$ in $H^1$ imply that $u_n \to u$ strongly in $L^p(\cG)$ and in $L^6(\cG)$. Then, using that $E_\a(u, \cG) = \lim_n E_\a(u_n, \cG)$, the strong convergence in $H^1(\cG)$ follows).
\end{proof}

%\begin{remark}
%It is natural to conjecture that $\bar \alpha_1= \bar \alpha_2$. This seems to be related to the existence of ``extremals" for the modified Gagliardo-Nirenberg inequality, which seems to be a difficult task due to the presence of $\theta_u$ in the inequality.
%\end{remark}

With Lemmas \ref{lem: main def} and \ref{lem: conv se non 0} in our hands, we can proceed with the:
\begin{proof}[Proof of Theorem \ref{thm: main def}-($ii$)] 
The estimates on $\cE_\a(\mu, \cG)$ follows directly from Lemma \ref{lem: main def}. 

Now we show that there are no ground states if $\alpha< \bar \alpha$. Suppose by contradiction that $\cE_\alpha(\mu, \cG) = 0$ is achieved by some $u_\alpha \in H^1_\mu(\cG)$, for some $\alpha< \bar \alpha$. Let $\beta \in (\alpha, \bar \alpha)$. Then $u_\alpha \not \equiv 0$, and 
\[
E_\beta(u_\alpha, \cG) = E_\a(u_\a,\cG) + \frac{\alpha-\beta}{p} \int_{\cG} u_\alpha^p < E_\a(u_\a,\cG) = 0,
\]
in contradiction with the fact that $\cE_\beta(\mu,\cG) =0$, since $\beta< \bar \alpha$.

Now we focus on the case $\alpha \in (\bar \alpha,0)$. Let $u \in H^1_\mu(\cG)$ be such that $E_\a(u, \cG) <\cE_\a(\mu, \cG)/2<0$. Then, by Lemma \ref{lem: mod GN}, there exists $\theta_u \in [0,\mu]$ such that
\[
\lp{u}{6}^6 \le 3 \left(1- \frac{\theta_u}{\mu_{\R}}\right)^2 \lp{u'}{2}^2 + C_{\cG} \theta^\frac12,
\]
with $C_{\cG}>0$ depending only on $\cG$. Therefore
\[
 \frac{\theta_u}{2\mu_{\R}} \left( 2- \frac{\theta_u}{\mu_{\R}}\right) \lp{u'}{2}^2 - C_{\cG} \theta_u^{\frac{1}{2}} + \frac{|\alpha|}{p} \lp{u}{p}^p \le E_\a(u, \cG) < \frac12 \cE_\a(\mu, \cG),
\]
whence there exists $c=c(\alpha,\mu,\cG)>0$ such that $\theta_u \ge c$, for every $u \in H^1_\mu(\cG)$ such that $E_\a(u, \cG) <\cE_\a(\mu, \cG)/2$. In turn, this implies that for any such $u$ 
\[
\frac{c}{2\mu_{\R}} \lp{u'}{2}^2 - C_{\cG} \mu^{\frac{1}{2}} <0,
\]
and hence there exists $C=C(\alpha,\mu,\cG)>0$ such that $\lp{u'}{2} \le C$, for every $u \in H^1_\mu(\cG)$ such that $E_\a(u, \cG) <\cE_\a(\mu, \cG)/2$. In particular, any minimizing sequence for $\cE_\a(\mu, \cG)$ is bounded in $H^1(\cG)$, and up to a subsequence $u_n \weak u$ weakly in $H^1(\cG)$. If the weak limit is $0$, then by using the fact that $\alpha<0$ and \cite[Lemma 4.2]{AST2}, we have that
\[
E_\alpha(u_n, \cG) \ge E_0(u_n,\cG) \ge \frac12 \left( 1-\left(\frac{\mu}{\mu_{\R}}\right)^2 \right) \lp{u_n'}{2}^2 +o(1) \ge o(1)
\]
as $n \to \infty$, since $\mu \le \mu_{\R}$. But then $\cE_\a(\mu, \cG) \ge 0$, which is in contradiction with the choice $\alpha \in (\bar \alpha,0)$. Therefore, the weak limit $u \not \equiv 0$, and $u$ is the desired ground state by Lemma \ref{lem: conv se non 0}.
\end{proof}

We conclude this section with the:

\begin{proof}[Proof of Proposition \ref{prop: non ex star}]
Suppose by contradiction that there exists a critical point $u \in H^1_\mu(\cG_N)$ of $E_\a(\cdot, \cG_N)$ on $H^1_\mu(\cG_N)$, for some $\mu \in (0, \mu_{\R}]$. Then $u$ is identified by a vector $u= (u_1,\dots,u_N)$ in $H^1(\R^+, \R^N)$, with $u_1(0) = \dots = u_N(0)$, such that for some $\lambda \in \R$
\[
\begin{cases}
-u_i''+ \lambda u_i = |u_i|^4 u_i + \alpha |u_i|^{p-2} u_i & \text{on $(0,+\infty)$, for every $i=1,\dots, N$} \\
\sum_{i=1}^N u_i'(0^+) = 0
\end{cases}
\]
(see e.g. \cite[Proposition 3.3]{AST1} for the details). By testing the equation for $u_i$ against $u_i$ on $(0,r)$, taking the limit along a sequence $r \to +\infty$ with $\lim_{r \to +\infty} u_i'(r) = 0$ (such a sequence does exist, since $u_i \in C^1(0,+\infty)$ tends to $0$ at infinity), and summing over $i=1,\dots,N$, we obtain
\[
\int_{\cG_N} |u'|^2 + \lambda |u|^2 = \int_{\cG_N} |u|^6 + \alpha |u|^p.
\]
In a similar way, by testing the equation for $u_i$ against $x u_i'$ on $(0,r)$, taking the limit along a suitable sequence $r \to +\infty$ (as in \cite[Proposition 1]{BeLi}), and summing over $i=1,\dots,N$, we also obtain the validity of the Pohozaev identity
\[
\int_{\cG_N} -|u'|^2 + \lambda |u|^2 = \int_{\cG_N} \frac13|u|^6 + \frac2p \alpha |u|^p.
\]
By combining the previous identities, we infer that 
\beq\label{Poho}
\int_{\cG_N} |u'|^2 = \int_{\cG_N} \frac13 |u|^6 +\alpha \frac{p-2}{2p} |u|^p.
\eeq
As a consequence, recalling that the star graph admits a cycle covering, and hence $\mu_{\cG}=\mu_{\R}$, we deduce that
\[
0 \le E_0(u, \cG_N) = \int_{\cG_N} \frac12|u'|^2 - \frac16|u|^6 = \frac{\alpha}2 \frac{p-2}{2p} \int_{\cG}|u|^p <0,
\]
since $\alpha<0$. This contradiction shows that there is no critical point of $E_\a(\cdot, \cG_N)$ on $H^1_\mu(\cG_N)$, when $\mu \in (0,\mu_{\R}]$.

Concerning the possible existence of local minimizers for $E_\a(\cdot, \cG_N)$ on $H^1_\mu(\cG_N)$, with arbitrary $\mu>0$, let us assume by contradiction that such a local minimizer $u$ does exist. As previously explained, $u$ is identified by a vector $u=(u_1,\dots,u_N) \in H^1_\mu(\R^+, \R^N)$, with $u_1(0) = \dots=u_N(0)$. It is immediate to check that $u_\lambda=(u_{1,\lambda}, \dots,u_{N,\lambda})$ defined by $u_{i,\lambda}(x) = \sqrt{\lambda} u_{i}(\lambda x)$ is another function in $H^1_\mu(\cG_N)$, and that
\[
E_\a(u_\lambda, \cG_N) = \lambda^2 \int_{\cG_N} \left(\frac12|u'|^2- \frac16 |u|^6\right)   - \frac{\alpha}p \lambda^\frac{p-2}2 \int_{\cG_N} |u|^p.
\]
By local minimality, we should have that 
\[
\left. \frac{d}{d\lambda} E_\a(u_\lambda, \cG_N)\right|_{\lambda = 1} = 0, \qquad \left. \frac{d^2}{d\lambda^2} E_\a(u_\lambda, \cG_N)\right|_{\lambda = 1} \ge 0.
\]
However, while the condition of the first derivative is satisfied (by the Pohozaev identity \eqref{Poho}), the condition on the second derivative yields
\[
\begin{split}
0 & \le 2 \int_{\cG_N} \left(\frac12|u'|^2- \frac16 |u|^6\right) - \frac{\alpha}{p} \left(\frac{p-2}{2}\right)\left(\frac{p-4}{2}\right) \int_{\cG_N} |u|^p \\
& = \frac{\alpha}{p} \left(\frac{p-2}{2}\right)\left(1- \frac{p-4}{2}\right) \int_{\cG_N} |u|^p = \frac{\alpha}{p} \left(\frac{p-2}{2}\right)\left(\frac{6- p}{2}\right) \int_{\cG_N} |u|^p<0,
\end{split}
\]
since $\alpha<0$ and $p \in (2,6)$. This is the desired contradiction.
\end{proof}

%\bibliography{normalized}
%\bibliographystyle{abbrv}

\end{document}